\newtheorem{thrm}{Theorem}
\numberwithin{equation}{section}
\newtheorem{theorem}{Theorem}[section]
\newtheorem{corollary}[theorem]{Corollary}
\newtheorem{lemma}[theorem]{Lemma}
\newtheorem{proposition}[theorem]{Proposition}
\theoremstyle{definition}
\newtheorem{definition}[theorem]{Definition}
\newtheorem{remark}[theorem]{Remark}
\newcommand{\mrm}{\mathrm}
\newcommand{\var}{\varepsilon}
\newcommand{\wit}{\widetilde}
\newcommand{\wih}{\widehat}
\newcommand{\mbf}{\mathbf}
\newcommand{\mbb}{\mathbb}
\newcommand{\mcal}{\mathcal}
\newcommand{\mak}{\mathfrak}
\newcommand{\smxylabel}[1]{{\text{\small$#1$}}}
\title[ Geometric approach to i-quantum
group of affine type $\mrm D$ ]
{Geometric approach to i-quantum
group of affine type $\mrm D$}
\author[Quanyong Chen]{Quanyong Chen}
\address{Harbin Engineering University,
Harbin, China}
\email{chenquanyong@hrbeu.edu.cn}
\author[Zhaobing Fan]{Zhaobing Fan}
\address{Harbin Engineering University,
Harbin, China}
\email{fanzhaobing@hrbeu.edu.cn}
\thanks{ }
\begin{document}
\begin{abstract}
  In this paper, we study the structures of Schur algebra and Lusztig algebra associated to partial flag varieties of affine type $\mrm {D}$. We show that there is a subalgebra of Lusztig algebra and the quantum groups arising from this subalgebras via stabilization procedures is a coideal subalgebra of quantum group of affine $\mak{sl}$ type.
  We construct monomial and canonical bases of the idempotented quantum algebra and establish the positivity properties of the canonical basis with respect to multiplication and the bilinear pairing.
\end{abstract}

\maketitle

\setcounter{tocdepth}{2}
\tableofcontents
\section{Introduction}

In 1990, Beilinson, Lusztig and MacPherson \cite{BLM90}  provided a geometric realization of quantum Schur algebra $\mbf S_{n,d}^{\rm fin}$ as convolution algebras on pairs of partial flags over a finite field.
By using multiplication formulas in $\mbf S_{n,d}^{\rm fin}$ with divided powers of Chevalley generators, they furhter realized the quantum group $\mbf U_q(\mak{gl}_n)$ in the projective limit of the quantum Schur algebras (as $d\rightarrow \infty$).
More importantly, an idempotented version of quantum group $\mbf U_q(\mak{gl}_n)$ is discovered inside the projective limit as well admitting a canonical basis.
The role of the canonical basis for the idempotented quantum group $\dot{\mbf U}_q(\mak{gl}_n)$ is similar to that of Kazhdan-Lusztig
bases \cite{KL79} for Iwahori-Hecke algebras.
Subsequently, the Schur-Jimbo duality, as a bridge
connecting the Iwahori-Hecke algebra of type $\mrm A$ and (idempotented) quantum group $\mbf U_q(\mak{gl}_n)$, is realized
geometrically by considering the product variety of the complete flag variety and the $n$-step
partial flag variety of type $\mrm A$ in \cite{GL92}.

There have been some generalizations of the BLM-type construction using the $n$-step (partial) flag varieties of affine type $\mrm A$ earlier on; see Ginzburg-Vasserot \cite{GV93} and Lusztig \cite{Lu99,Lu00}.
Some further developments could be found in \cite{GRV93,Mc12,P09,VV99}.
Also, there is an affine version of Schur-Jimbo duality formed in \cite{CP96}.
The affine quantum Schur algebra $\mbf S_{n,d}$ is by definition the convolution algebra of pairs of flags of affine type $\mrm A$.
There is a natural homomorphism from the idempotented quantum affine $\mak{sl}_n$ to the affine quantum Schur algebra
\begin{equation*}
\dot{\mbf U}_q(\wih{\mak{sl}}_n) \longrightarrow \mbf S_{n,d},
\end{equation*}
which is no longer surjective.
The image of this map is denoted by Lusztig subalgebra $\mbf U_{n,d}$ and generated by the Chevalley generators.

An $(\mbf U_q^\imath(\mak{gl}_n), \mbf H_\mrm C)$-duality was discovered algebraically and categorically in \cite{BW13} as a crucial ingredient for a new approach to Kazhdan-Lusztig theory of classical types.
Motivated by \cite{BW13}, Bao, Kujawa, Li and Wang \cite{BKLW14, BLW14} provided a geometric construction of Schur-type algebras $\mbf S_{n,d}^{\imath,\rm fin}$ in terms of $n$-step flag varieties of type $\mrm B_d$ (or $\mrm C_d$).
The $\imath$-Schur duality has been realized in \cite{BKLW14} by using mixed pairs of $n$-step flags and complete flags of type $\mrm B$ or $\mrm C$.
They further established multiplication formulas in the Schur algebras $\mbf S_{n,d}^{\imath, \rm fin}$ with divided powers of Chevalley generators, which again enjoy some remarkable stabilization properties as $d \rightarrow \infty$.
They showed the quantum algebra arising from the stabilization procedure is a coideal subalgebra $\mbf U_q^\imath (\mak{gl}_n)$ of $\mbf U_q(\mak{gl}_n)$.
The pair $(\mbf U_q^\imath (\mak{gl}_n),\mbf U_q(\mak{gl}_n))$ forms a quantum symmetric pair.

The theory of quantum symmetric pairs was systematically developed by Letzter \cite{Le99, Le02} and Kolb \cite{Ko14}.
Let $\imath$ be an algebra involution on the symmetrizable Kac-Moody algebra $\mak g$ of the second kind \cite[\S 2]{Ko14}, and $\mak g^\imath$ be the subalgebra of $\imath$-invariants in $\mak g$.
For simple Lie algebras $\mak g$ of finite type, the classification of $\mak g^\imath$ corresponds to the classification of real simple Lie algebras \cite{OV90}.
The quantum analogue $\mbf U_q^\imath(\mak g)$ of the enveloping algebra $\mbf U(\mak g^\imath)$ is a coideal subalgebra of the quantized enveloping algebra $\mbf U_q(\mak g)$ in the sense that the comultiplication $\Delta$ on $\mbf U_q(\mak g)$ satisfies
\begin{equation*}
\Delta: \mbf U_q^\imath (\mak g) \longrightarrow \mbf U_q^\imath (\mak g)\otimes \mbf U_q(\mak g).
\end{equation*}
The algebra $\mbf U_q^\imath(\mak g)$ specializes to $\mbf U(\mak g^\imath)$ at $q=1$ and the pair $(\mbf U_q^\imath(\mak g), \mbf U_q(\mak g))$ is called a quantum symmetric pair.

The affine Schur algebra $\mbf S_{n,d}^{\mak c}$ is by the definition the convolution algebra of pairs of flags of affine type $\mrm C$ \cite{FLLLW20}.
It admits a canonical basis which enjoys a positivity with respect to multiplication. Denote a subalgebra $\mbf U_{n,d}^{\mak c}$ of $\mbf S_{n,d}^{\mak c}$ generated by the Chevalley generators.
They introduce a comultiplication homomorphism and transfer map on Schur algebras such that comultiplication homomorphism \cite[\S 5]{FLLLW20} and transfer map \cite[\S 6]{FLLLW20} make sense on the level of Schur algebras instead of Lusztig algebras. The algebra $\mbf U_n^{\mak c}$ is by definition a suitable subalgebras of the projective limit of the projective system of Lusztig algebras and the comultiplication homomorphisms gives rise to show that $\mbf U_n^{\mak c}$ is a coideal subalgebra of $\mbf U_q(\widehat{\mak{sl}}_n)$.
The idempotented form of $\mbf U_n^{\mak c}$, denoted by $\dot{\mbf U}_n^{\mak c}$ can be formulated analogous to the idempotented quantum groups $\dot{\mbf U}_n$ in \cite{BLM90,Lu93}.
The canonical basis of $\dot{\mbf U}_n^{\mak c}$ be constructed and its positivity with respect to the multiplication and a bilinear pairing of geometric origin also be established in \cite{FLLLW20}.

Fan, Li \cite{FL14} established a new duality between the Schur algebra $\mbf S^m$ and the Iwahori-Hecke algebra $\mbf H_\mrm D$ of type $\mrm D$ attached to the special orthogonal group ${\rm SO}_\mbb F(2d)$ algebraically and geometrically.
The algebra $\mbf S^m$ contains the subalgebra $\wit{\mbf S}^m$ and two additional idempotents, where $\wit{\mbf S}^m$ isomorphic to $\mbf S_{n,d}^{\imath, \rm fin}$ of type $\mrm C$.
They provided a geometric construction of the Schur algebra $\mbf S^m$ in terms of $n$-step flag varieties of type $\mrm D$.
The Schur duality has been realized by using mixed pairs of $n$-step flags and complete flags of type $\mrm D$.
The quantum algebra $\mbf U^m$ and its idempotented form (canonical basis) are obtained by stabilization and completion process following \cite{BLM90,BKLW14}.

To this end, it is natural to ask what happens to the case of the affine type $\mrm D$.
The purpose of this paper is to provide an answer to this question, as a sequel to \cite{FL14, FLLLW20}.
There is a lattice presentation of the complete and $n$-step flag variety of affine type $\mrm D$ in \cite{CFW24},
on which the special orthogonal group ${\rm SO}_F(V)$ (where $F=k((\var))$) acts on the complete flags and the $n$-step partial flag variety $\mcal X_{n,d}^\mak d$, which is formulated in this paper, for $n$ even.

We parameterize the orbits for the product $\mcal X_{n,d}^\mak d\times \mcal X_{n,d}^\mak d$ under the diagonal action of the group ${\rm SO}_F(V)$ by the set $\Xi_\mak d$ of signed matrices.
Similarly, denote by $\Xi_\mak d^{ap}$ the subset of aperiodic signed matrices in $\Xi_\mak d$.
The Schur algebra $\mbf S_{n,d}^\mak d$
is by the definition the convolution algebra of pairs of the flags in $\mcal X_{n,d}^\mak d$.
We further establish the Schur duality between the (extended) Hecke algebra $\mbf H_{
\wit{\mrm D}}$ and the Schur algebra $\mbf S_{n,d}^\mak d$ algebraically and geometrically.
 The algebra $\mbf S_{n,d}^\mak d$ admits a standard basis $\{[\mak a] \mid \mak a \in \Xi_\mak d \}$ and a canonical basis $\{\{\mak a\}_d \mid \mak a \in \Xi_\mak d \}$, which enjoys a positivity with respect to multiplication.
We formulate a subalgebra $\mbf U_{n,d}^\mak d$ of $\mbf S_{n,d}^\mak d$ generated by the Chevalley generators.
Our first main result is the following:
\begin{thrm}[Theorem \ref{1222}]
The algebra $\mbf U_{n,d}^\mak d$ admits a monomial basis $\{\zeta_\mak a \mid \mak a \in \Xi_\mak d^{ap}\}$.
Similarly, it possesses a canonical basis $\{\{\mak a\}_d \mid \mak a \in \Xi_\mak d^{ap}\}$, which is compatible with the corresponding bases in $\mbf S_{n,d}^\mak d$ under the inclusion $\mbf U_{n,d}^\mak d \subset \mbf S_{n,d}^\mak d$.
\end{thrm}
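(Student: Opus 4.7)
The plan is to adapt the Beilinson--Lusztig--MacPherson strategy as carried out for affine type $\mrm A$ in \cite{Lu99} and for affine type $\mrm C$ in \cite{FLLLW20}. The raw material is the multiplication formulas in $\mbf S_{n,d}^\mak d$ expressing products of divided powers $e_i^{(p)}\cdot[\mak a]$, $f_i^{(p)}\cdot[\mak a]$ and diagonal generators with $[\mak a]$ as explicit $\mbb Z[v,v^{-1}]$-linear combinations of standard basis elements; these are assumed to have been established in earlier sections of the paper as applications of the convolution product on $\mcal X_{n,d}^\mak d\times \mcal X_{n,d}^\mak d$.

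First I fix a partial order $\preceq$ on $\Xi_\mak d$ (the usual partial column-sum order, refined by an orbit-dimension/length function), and for each aperiodic $\mak a\in\Xi_\mak d^{ap}$ define $\zeta_\mak a$ as an ordered product of divided powers of Chevalley generators indexed by a fixed reading order of the off-diagonal entries of $\mak a$. Applying the multiplication formulas iteratively, the leading term under $\preceq$ is exactly $[\mak a]$, yielding the unitriangular expansion
\[
\zeta_\mak a \;=\; [\mak a] + \sum_{\mak b \prec \mak a} c_{\mak a,\mak b}\,[\mak b], \qquad c_{\mak a,\mak b}\in\mbb Z[v,v^{-1}],
\]
which immediately gives linear independence of $\{\zeta_\mak a \mid \mak a\in\Xi_\mak d^{ap}\}$, and membership in $\mbf U_{n,d}^\mak d$ by construction.

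The main obstacle is to prove that $\{\zeta_\mak a\mid \mak a\in\Xi_\mak d^{ap}\}$ already spans all of $\mbf U_{n,d}^\mak d$, i.e.\ that every monomial in the Chevalley generators expands in these $\zeta_\mak a$ with only aperiodic matrices occurring. Following \cite{Lu99,FLLLW20}, the point is that each application of a Chevalley generator alters at most one pair of off-diagonal entries by one step, so an induction on the total weight of an arbitrary monomial, together with the multiplication formulas, lets one rewrite it as a $\mbb Z[v,v^{-1}]$-combination of $\zeta_\mak a$ without ever producing a periodic matrix. The combinatorial bookkeeping for the symmetry constraint on signed matrices enforced by the affine type $\mrm D$ flag structure (i.e.\ the involution coming from the orthogonal form on $V$) is the delicate part and will require a careful case analysis analogous to, but genuinely distinct from, the type $\mrm C$ argument in \cite{FLLLW20}.

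For the canonical basis, observe that the bar involution on $\mbf S_{n,d}^\mak d$ fixes each $e_i^{(p)}$ and $f_i^{(p)}$, hence restricts to an involution on $\mbf U_{n,d}^\mak d$, and each $\zeta_\mak a$ is bar-invariant by construction. Combining the unitriangular expansion of $\zeta_\mak a$ with the characterizing unitriangular expansion $\{\mak a\}_d = [\mak a] + \sum_{\mak b\prec\mak a}\pi_{\mak a,\mak b}(v)\,[\mak b]$ of the Schur-algebra canonical basis (with $\pi_{\mak a,\mak b}\in v^{-1}\mbb Z[v^{-1}]$), inversion shows that for aperiodic $\mak a$ the element $\{\mak a\}_d$ lies in the span of $\{\zeta_\mak b \mid \mak b\preceq \mak a\}$, all of which are indexed by aperiodic matrices by the spanning step above. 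A standard Kazhdan--Lusztig purification argument inside $\mbf U_{n,d}^\mak d$ (subtracting bar-invariant rational multiples of $\zeta_\mak b$) then forces $\{\mak a\}_d \in \mbf U_{n,d}^\mak d$, and the set $\{\{\mak a\}_d\mid \mak a\in\Xi_\mak d^{ap}\}$, being unitriangularly related to the monomial basis, is the canonical basis of $\mbf U_{n,d}^\mak d$, compatible with that of $\mbf S_{n,d}^\mak d$ under the inclusion.
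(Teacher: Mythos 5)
Your overall architecture matches the paper's: for each aperiodic $\mak a$ you build a monomial $\zeta_\mak a=[\mak a]+{\rm lower \ terms}$ by iterating the multiplication formulas (this is the paper's Lemma \ref{chen121212}, proved there by induction on the statistic $\Psi(\mak a)=\sum_{i\in[1,n],i<j}|j-i|a_{ij}$ using Lemma \ref{CQY}), and you then pass between $\{\zeta_\mak a\}$ and $\{\{\mak a\}_d\}$ by unitriangularity. That half of your argument is sound and is essentially what the paper does.

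The gap is in the spanning step. You claim that an induction on total weight, using the multiplication formulas, rewrites an arbitrary Chevalley monomial as a combination of the $\zeta_\mak a$ ``without ever producing a periodic matrix.'' This cannot work as stated. The multiplication formulas (Lemma \ref{formulas}, Proposition \ref{divided formula}) produce standard basis elements $[\mak b]$ with $\mak b$ periodic already after a single application, and the aperiodic standard basis elements do \emph{not} span $\mbf U_{n,d}^\mak d$: the IC expansion $\{\mak a\}_d=\sum_{\mak b\leq\mak a}P_{\mak b,\mak a}[\mak b]$ of an aperiodic canonical basis element involves periodic $[\mak b]$'s. So the downward induction gets stuck precisely when a maximal surviving term $[\mak b]$ has $\mak b$ periodic, since no monomial $\zeta_\mak b$ exists to cancel it. What is actually required is the statement that an aperiodic monomial, expanded in the \emph{canonical} basis of $\mbf S_{n,d}^\mak d$, involves only aperiodic indices --- the paper's Proposition \ref{chen1213}, i.e.\ the affine type $\mrm D$ analogue of Lusztig's aperiodicity theorem. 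Its proof is geometric (the monomial is the class of a semisimple complex obtained by proper pushforward, and one bounds the supports of its IC summands), not a bookkeeping consequence of the multiplication formulas; your proposal contains no substitute for it. Your final ``purification'' paragraph leans on the same missing fact: to conclude that $\{\mak a\}_d$ lies in the span of the $\zeta_\mak b$ you must already know that only aperiodic canonical basis elements occur in the expansion of $\zeta_\mak a$. Once Proposition \ref{chen1213} is granted, the rest of your argument (and the paper's) goes through by triangular induction on the partial order.
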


Generalizing the constructions in affine type $\mrm A, \mrm C$ and finite type $\mrm D$ \cite{FL15,FLLLW20,Lu00}, we introduce a subalgebra $\wit{\mbf S}_{n,d}^{\mak d}$ of $\mbf S_{n,d}^{\mak d}$ and the comultiplication-like homomorphisms $\wit{\Delta}^{\mak d}=\wit{\Delta}_{d',d''}^\mak d:
\wit{\mbf S}_{n,d}^{\mak d}\rightarrow
\wit{\mbf S}_{n,d}^{\mak d}\otimes \mbf S_{n,d''}$ for any composition $ d=d'+d''$.
These further lead to the transfer maps $\phi_{d,d-n}^\mak d: \mbf S_{n,d}^\mak d\rightarrow \mbf S_{n,d-n}^\mak d$, which are shown to preserve the Chevalley generators.
The homomorphisms $\phi_{d,d-n}^\mak d$ make sense on the level of Schur algebras instead of Lusztig algebras.

The algebras $\wit{\mbf U}_n^\mak d$ and $\mbf U_n^\mak d$ are by definition the suitable subalgebras of projective limit of the projective system $\{(\mbf U_{n,d}^\mak d,\phi_{d,d-n}^\mak d)\}_{d\geq 1}$, just as $\mbf U_n$ is a limit algebra for a similar affine type $\mrm A$ projective system.
We showed that the family of homomorphisms $\{\wit{\Delta}_{d',d''}^\mak d\}$ gives rise to a homomorphism $\wit{\Delta}^\mak d:\wit{\mbf U}_n^\mak d \rightarrow \wit{\mbf U}_n^\mak d\otimes \mbf U_n$ and an injective homomorphism $\jmath_n:\wit{\mbf U}_n^\mak d \rightarrow \mbf U_n$, whose images on the Chevalley generators are explicitly given.
\begin{thrm}[Proposition \ref{1400}, Proposition \ref{1900}]
There exists a decomposition of $\mbf U_n^\mak d$ such that $\mbf U_n^\mak d=\mbf J_+\wit{\mbf U}_n^\mak d
\oplus\mbf J_-\wit{\mbf U}_n^\mak d$, where $\mbf J_+,\mbf J_-$ are two idempotents, and the algebra $\wit{\mbf U}_n^\mak d$ is a coideal subalgebra of $\mbf U_n$.
The pair $(\mbf U_n, \wit{\mbf U}_n^\mak d)$ forms a quantum symmetric pair of affine type in the sense of Lezter and Kolb \cite{Ko14}.
\end{thrm}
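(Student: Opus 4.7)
The plan is to treat the two assertions separately, first establishing the internal decomposition of $\mbf U_n^\mak d$ in terms of $\wit{\mbf U}_n^\mak d$, and then importing the coideal property from the comultiplication-like maps already constructed at the Schur level.

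For the decomposition, I would begin by identifying the idempotents $\mbf J_\pm$ as the projective-limit elements whose images in each $\mbf U_{n,d}^\mak d$ are the sums of $[\mrm{diag}(\lambda)]$ over the two families of diagonal signed matrices distinguished by the type-$\mrm D$ sign convention (this is the analogue of the two idempotents that split the type $\mrm D$ finite-type Schur algebra $\mbf S^m$ into $\wit{\mbf S}^m$ plus two extra pieces in \cite{FL14}). First I would check that $\mbf J_+$ and $\mbf J_-$ are well defined in the limit because they are stable under the transfer maps $\phi_{d,d-n}^\mak d$ (which preserve Chevalley generators, hence preserve the weight decomposition), that $\mbf J_+ \mbf J_-=\mbf J_- \mbf J_+=0$ and $\mbf J_++\mbf J_-=1$, and that each $\mbf J_\pm$ commutes with $\wit{\mbf U}_n^\mak d$. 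Combined with the fact that $\wit{\mbf U}_n^\mak d\subset \mbf U_n^\mak d$ and that every Chevalley generator of $\mbf U_n^\mak d$ either lies in $\wit{\mbf U}_n^\mak d$ or is of the form $\mbf J_\pm x$ with $x\in \wit{\mbf U}_n^\mak d$, this yields $\mbf U_n^\mak d=\mbf J_+\wit{\mbf U}_n^\mak d\oplus \mbf J_-\wit{\mbf U}_n^\mak d$ as a direct sum of left $\wit{\mbf U}_n^\mak d$-modules. The monomial basis produced in Theorem~\ref{1222} can be used to refine this into a basis statement if needed.

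For the coideal property, I would exploit the homomorphism $\wit{\Delta}^\mak d:\wit{\mbf U}_n^\mak d\to \wit{\mbf U}_n^\mak d\otimes \mbf U_n$ assembled from the family $\{\wit{\Delta}_{d',d''}^\mak d\}$ and the injective homomorphism $\jmath_n:\wit{\mbf U}_n^\mak d\to \mbf U_n$. The key compatibility to verify is the identity
\begin{equation*}
\Delta\circ \jmath_n=(\jmath_n\otimes \mrm{id})\circ \wit{\Delta}^\mak d,
\end{equation*}
where $\Delta$ is the standard comultiplication on $\mbf U_n=\dot{\mbf U}_q(\wih{\mak{sl}}_n)$. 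This is checked on Chevalley generators using the explicit formulas already established for $\jmath_n$ and for $\wit{\Delta}_{d',d''}^\mak d$ at the Schur-algebra level, combined with the compatibility of $\wit{\Delta}^\mak d$ with the transfer maps $\phi_{d,d-n}^\mak d$. Once this identity is in hand, the containment $\Delta(\jmath_n(\wit{\mbf U}_n^\mak d))\subset \jmath_n(\wit{\mbf U}_n^\mak d)\otimes \mbf U_n$ is immediate, which is precisely the statement that $\wit{\mbf U}_n^\mak d$ (identified with its image under $\jmath_n$) is a right coideal subalgebra of $\mbf U_n$.

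For the Letzter--Kolb claim, I would extract from the explicit images $\jmath_n(B_i)$ of the Chevalley generators the Satake datum $(X,Y,\tau)$ underlying the pair: the involution $\tau$ on the affine Dynkin diagram of type $\wih{\mrm A}_{n-1}$ is the one swapping the nodes around the two distinguished nodes that carry the $\mbf J_\pm$-deformation, and $X$ is the set of fixed/unmarked nodes. The generators $B_i=F_i+\mbf J_+ E_{\tau(i)}K_i^{-1}+\mbf J_-(\cdots)$ then match the Kolb generators up to the usual rescaling, and checking the $\imath$-Serre and cross relations reduces to the relations already verified in \cite{FLLLW20} for affine $\mrm C$ together with the extra verification at the two type-$\mrm D$ ``branch'' nodes that the $\mbf J_\pm$-corrections are consistent. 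The admissibility conditions in \cite{Ko14} are automatic since $\tau$ restricts to the diagram automorphism of the finite sub-diagram of type $\mrm D$.

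The main obstacle I expect is the last step: producing the Satake diagram explicitly and verifying that the relations among the $B_i$'s obtained from the geometric construction are exactly the Kolb relations, because the two idempotents $\mbf J_\pm$ introduce a non-standard feature (absent in types $\mrm B$ and $\mrm C$) that must be absorbed into the quantum symmetric pair presentation. The $\imath$-Serre relations at the two distinguished nodes will require the most care, and I would attack them by lifting the corresponding identities from the finite-type $\mrm D$ case of \cite{FL14} through the transfer maps, using the fact that $\phi_{d,d-n}^\mak d$ preserves the Chevalley generators so that any Serre-type relation stable under $\phi_{d,d-n}^\mak d$ descends to $\wit{\mbf U}_n^\mak d$.
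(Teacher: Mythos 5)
Your plan follows the paper's route almost exactly: the decomposition is obtained from the finite-level splitting $\mbf S_{n,d}^\mak d=\mbf J_+\wit{\mbf S}_{n,d}^\mak d\oplus\mbf J_-\wit{\mbf S}_{n,d}^\mak d$ together with the compatibility $\phi_{n,d}^\mak d(\mbf J_\pm x)=\mbf J'_\pm\wit\phi_{n,d}^\mak d(x)$, and the coideal property is obtained by assembling $\jmath_n$ and $\Delta^\mak d$ from the finite-level maps via the commutative squares with the transfer maps and the universality of the projective limit, then checking $\Delta\circ\jmath_n=(\jmath_n\otimes 1)\circ\Delta^\mak d$ on Chevalley generators (the paper derives this as the degenerate case of coassociativity, Proposition \ref{1217}). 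The injectivity of $\jmath_n$ likewise descends from the injectivity of each $\jmath_{n,d}$ (Proposition \ref{1401}), as you propose.

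One correction to your final step: in the paper's setup the idempotents $\mbf J_\pm$ never enter the coideal subalgebra or its generators. The coideal subalgebra is $\wit{\mbf U}_n^\mak d$, generated by $\mbf E_i,\mbf F_i,\mbf K_i,\mbf T_0,\mbf T_r$ only, and the images under $\jmath_n$ in \eqref{chen1215} carry no $\mbf J_\pm$-corrections; the involution on the type $\mrm A^{(1)}_{2r-1}$ diagram fixes the nodes $0$ and $r$ and swaps $i\leftrightarrow n-i$. The idempotents live only in the larger algebra $\mbf U_n^\mak d=\mbf J_+\wit{\mbf U}_n^\mak d\oplus\mbf J_-\wit{\mbf U}_n^\mak d$, which is deliberately kept outside the quantum symmetric pair $(\mbf U_n,\wit{\mbf U}_n^\mak d)$. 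So the ``non-standard feature'' you anticipate having to absorb into the Kolb presentation does not arise; the $\imath$-Serre-type relations at the nodes $0$ and $r$ are exactly the $\mbf T_0,\mbf T_r$ relations already listed in the presentation of $\mbf U_{n,d}^\mak d$, and they pass to the limit because the transfer maps preserve the Chevalley generators, as you correctly note.
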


We formulate idempotented forms of $\mbf U_n^\mak d$, denoted by $\dot{\mbf U}_n^\mak d$, which is analogous to the idempotented quantum groups as formulated in \cite{FL14,FLLLW20}.
Following the approach of \cite{Mc12} in the affine type $\mrm A$ setting and \cite{FL15} in the finite type $\mrm D$ setting, we construct canonical basis for $\dot{\mbf U}_n^\mak d$ and establish its positivity with respect to the multiplication and a bilinear pairing of geometric origin.
\begin{thrm}[Theorem \ref{1500},Theorem \ref{1501}]
The algebra $\dot{\mbf U}_n^\mak d$ admits a canonical basis $\dot{\mbf B}_n^\mak d$. The structure constants of the canonical basis $\dot{\mbf B}_n^\mak d$ lie in $\mbb N[v,v^{-1}]$ with respect to the multiplication and in $v^{-1}\mbb N[[v]]$ with respect to the bilinear pairing.
\end{thrm}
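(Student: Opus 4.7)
The plan is to follow the strategy of \cite{Mc12} (affine type $\mrm A$) and \cite{FL15} (finite type $\mrm D$), adapted to the affine type $\mrm D$ setting by working with the projective system of Lusztig algebras $\{(\mbf U_{n,d}^\mak d, \phi_{d,d-n}^\mak d)\}_{d \geq 1}$ furnished by the transfer maps. The starting point is the canonical basis $\{\{\mak a\}_d \mid \mak a \in \Xi_\mak d^{ap}\}$ produced for each $\mbf U_{n,d}^\mak d$ in Theorem \ref{1222}, together with the coideal structure from Proposition \ref{1400}.

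First I would upgrade the canonical bases of $\mbf U_{n,d}^\mak d$ to the projective limit. The key compatibility statement is that $\phi_{d,d-n}^\mak d$ sends each canonical basis element $\{\mak a\}_d$ either to a canonical basis element $\{\mak a'\}_{d-n}$ (for an explicitly determined $\mak a' \in \Xi_\mak d^{ap}$ obtained by decreasing the diagonal) or to zero. Since $\phi_{d,d-n}^\mak d$ preserves Chevalley generators, this compatibility can be extracted from the monomial presentation, bar-invariance, and the Kazhdan--Lusztig-type upper-triangularity characterizing $\{\mak a\}_d$; geometrically, one tracks the IC sheaf defining $\{\mak a\}_d$ under the correspondence underlying $\phi_{d,d-n}^\mak d$. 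Given this, the inverse system admits stable sequences, and I would define $\dot{\mbf B}_n^\mak d$ as the set of equivalence classes of eventually-coincident families $(\{\mak a_d\}_d)_{d \gg 0}$, decomposed by the weight idempotents $1_\lambda$ that arise from the decomposition $\mbf U_n^\mak d = \mbf J_+ \wit{\mbf U}_n^\mak d \oplus \mbf J_- \wit{\mbf U}_n^\mak d$ of Proposition \ref{1400}. Completeness and linear independence then follow in each $1_\lambda \dot{\mbf U}_n^\mak d 1_\mu$ because stabilization matches the finite-dimensional weight pieces bijectively.

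For the positivity statements, both are reduced to geometry at finite level. Positivity with respect to multiplication reduces to positivity of structure constants of $\{\{\mak a\}_d \mid \mak a \in \Xi_\mak d\}$ in $\mbf S_{n,d}^\mak d$, which in turn follows from interpreting the canonical basis elements as (shifts of) intersection cohomology complexes of orbit closures on $\mcal X_{n,d}^\mak d \times \mcal X_{n,d}^\mak d$ and applying Deligne's purity theorem together with the decomposition theorem; since multiplication in $\dot{\mbf U}_n^\mak d$ is compatible, through the inclusions $\mbf U_{n,d}^\mak d \subset \mbf S_{n,d}^\mak d$, with the convolution products, positivity passes to the limit. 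For the bilinear pairing of geometric origin, I would realize it as an Euler form on equivariant Ext-groups of the defining perverse sheaves, write out the contribution of each cohomological degree, and again invoke purity so that the dimensions assemble into a power series in $v^{-1}\mbb N[[v]]$.

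The hard part will be the first step: controlling the transfer map $\phi_{d,d-n}^\mak d$ on canonical basis elements. Two features specific to affine type $\mrm D$ complicate the analysis compared to affine type $\mrm C$ in \cite{FLLLW20} and finite type $\mrm D$ in \cite{FL15}. The Dynkin diagram has two branch nodes, so the parametrization by $\Xi_\mak d^{ap}$ must be compatible with the symmetry that exchanges the two branches, and the aperiodicity condition must be verified to be preserved (and the $\mak a \mapsto \mak a'$ assignment to be well-defined) under the iteration $d \mapsto d - n$. A careful case analysis at the two endpoint nodes, essentially checking that the corresponding IC sheaf restricts without producing spurious summands, will be required; the rest of the argument should then be a formal reshuffling of the affine $\mrm A$/$\mrm C$ and finite $\mrm D$ templates.
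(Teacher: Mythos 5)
Your overall strategy coincides with the paper's: stabilize the finite-level canonical bases $\{\mak a\}_d$ of Theorem \ref{1222} along the transfer maps, and reduce both positivity claims to the perverse-sheaf/point-counting interpretation at the level of $\mbf S_{n,d}^\mak d$, exactly as in \cite{Mc12,LW15,FL15}. Two points in your first step need repair, though. First, the transfer map does not send every canonical basis element to a canonical basis element or to zero at every level; what holds, and what the paper actually proves (Proposition \ref{monomial}), is only the asymptotic statement $\phi_{d+pn,d+(p-1)n}^\mak d(\{_{2p}\mak a\}_{d+pn})=\{_{2p-2}\mak a\}_{d+(p-1)n}$ for $p\gg 0$; this suffices because $b_{\widehat{\mak a}}$ is defined by taking the stable tail and pushing it down by transfer maps for small $p$, rather than by requiring compatibility at all levels. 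Second, your justification of completeness and linear independence via ``finite-dimensional weight pieces'' fails: in the affine setting the spaces $1_{\widehat{\lambda}}\dot{\mbf U}_n^\mak d 1_{\widehat{\mu}}$ are infinite-dimensional, since for matrices in $\wit{\Xi}_{n,\mak d}^{ap}$ the off-diagonal entries are unbounded for fixed row and column sums (compensated by negative diagonal entries). The paper instead first shows that the stabilized monomial elements $\{\zeta_{\widehat{\mak a}}\}$ form a basis of $\dot{\mbf U}_n^\mak d$ and then deduces Theorem \ref{1500} from the unitriangular relation $\zeta_{\widehat{\mak a}}=b_{\widehat{\mak a}}+{\rm lower \ terms}$, which comes out of the second half of the same stabilization proposition; you will need this monomial scaffolding (or some substitute) to close the argument. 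With those two corrections, your positivity reductions are exactly the ones used in the paper.
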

\noindent {\bf Acknowledgement.}
Z. Fan was partially supported by the NSF of China grant
12271120, the NSF of Heilongjiang Province grant JQ2020A001, and the Fundamental
Research Funds for the central universities.

\section{Convolution algebra and Schur Duality}
In this section, we study the convolution algebra $\mbf S_{n,d}^\mak d$ of pairs of $n$-step flags of affine type $\mrm D$.
Then we present multiplication formulas in $\mbf S_{n,d}^\mak d$ with (the divided powers of) Chevalley generators.
\subsection{Lattice presentation of affine flag varieties}
In this subsection, we recall some results about affine flag varieties of type $\mrm D$.
We refer to \cite{CFW24} and the references therein for more details.

Let $\mbb F$ be a finite field of $q$ elements with odd characteristic.
Let $F=\mbb F((\var))$ be the field of formal Laurent series over $\mbb F$ and
$\mak o=\mbb F[[\var]]$ the ring of formal power series.
Denote by $\mak m$ the maximal ideal of $\mak o$ generated by
$\var$.
Throughout this paper, we shall fix a pair $(d,r)$ of positive integers and
\begin{equation*}
D=2d, \ \  n=2r.
\end{equation*}
Let $V=F^D$ be a vector space with a symmetric bilinear form
$Q: V\times V \rightarrow  F$ whose associated matrix under the standard basis is
\begin{equation}\label{matrix}
J=\begin{bmatrix}
0 & I_d \\
I_d & 0 \\
\end{bmatrix}.
\end{equation}
Denote by ${\rm O}_F(V)$ and ${\rm SO}_F(V)$ the orthogonal group and the special orthogonal group with respect to $Q$, respectively.

A free $\mak o$-submodule $\mcal L$ of $V$ of rank $D$ is called an $\mak o$-lattice.
We set
\begin{equation*}
  \mcal L^\sharp =\{v\in V \mid Q(v,\mcal L) \subset\mak o\}, \ \
  \mcal L^\ast =\{v\in V \mid Q(v,\mcal L ) \subset\mak m\}.
\end{equation*}

Let $\mcal M$ and $ \mcal L$ be two lattices such that $\mcal M \subset \mcal L$.
Denote by $|\mcal L/\mcal M|$ the dimension of the $ \mbb F$-vector space $\mcal L / \mcal M$.

Fix a pair $(\mcal M, \mcal L)$ of lattices such that $\var \mcal L \subset \mcal M=\mcal M^\ast\subset \mcal L=\mcal L^\sharp$.
We consider the sets $\mcal Y_d^\mak d$ and $\mcal X_{n,d}^\mak d$ of all collections of lattices in $V$:
\begin{align*}
\mcal X_{n,d}^\mak d=&\{L=(L_i)_{i\in \mbb Z}\mid L_i \subset L_{i+1}, L_i=\var L_{i+n}, L_i^\ast=L_{n-i}, \ \forall  i \in \mbb Z \};\\
\mcal Y_d^\mak d=&\{\Lambda=(\Lambda_i)_{i\in \mbb Z} \mid
\Lambda_i\subset \Lambda_{i+1}, \Lambda_i=\var \Lambda_{i+D}, \ \Lambda_i^\ast=\Lambda_{D-i},  \ |\Lambda_i/\Lambda_{i-1}|=1, \\ \ & \forall   i \in \mbb Z, \
|\mcal M/\mcal M\cap \Lambda_d|+|\mcal L/\mcal L\cap \Lambda_D|={\rm even}\}.
\end{align*}
Then ${\rm SO}_F(V)$ acts on $\mcal X_{n,d}^\mak d$ and $\mcal Y_d^\mak d$ by component-wise action.
Denote by $\mcal X_{n,d}^{\mak d,1}$ and $\mcal X_{n,d}^{\mak d,2}$ the subsets of $\mcal X_{n,d}^\mak d$ as following:
 \begin{align*}
  \mcal X_{n,d}^{\mak d,1}&=\left\{L=(L_i)_{i\in \mbb Z} \in \mcal X_{n,d}^\mak d  \mid  |\mcal M/L_r\cap \mcal M|+|\mcal L/L_n\cap \mcal L|={\rm even}\right\}; \\
   \mcal X_{n,d}^{\mak d,2}&=\left\{L=(L_i)_{i\in \mbb Z} \in \mcal X_{n,d}^\mak d  \mid |\mcal M/L_r\cap \mcal M|+|\mcal L/L_n\cap \mcal L|={\rm odd}\right\}.
 \end{align*}
 Then $\mcal X_{n,d}^\mak d$ can be decomposed into $\mcal X_{n,d}^\mak d=\mcal X_{n,d}^{\mak d,1}\sqcup \mcal X_{n,d}^{\mak d,2}$.
 We set
\begin{align}\label{abcd}
 \Lambda_{n,d}^\mak d=\bigg\{\lambda=(\lambda_i)_{i\in \mbb Z}\in \mbb N^\mbb Z \ \Big| \ \lambda_i=\lambda_{1-i}=\lambda_{i+n}, \forall i\in \mbb Z; \sum_{1\leq i\leq n}\lambda_i=D\bigg\}.
\end{align}
 The set $\mcal X_{n,d}^\mak d$ admits the following decomposition.
 \begin{align*}
\mcal X_{n,d}^\mak d=\bigsqcup_{\mbf a=(a_i)\in \Lambda_{n,d}^\mak d}\mcal X_{n,d}^\mak d(\mbf a),\ \  \text{where} \ \mcal X_{n,d}^\mak d(\mbf a)=\left\{L\in \mcal X_{n,d}^\mak d \mid |L_i/L_{i-1}|=a_i, \forall i \in \mbb Z\right\}.
\end{align*}

Let ${\rm SO}_F(V)$ act diagonally on $\mcal X_{n,d}^\mak d \times \mcal X_{n,d}^\mak d, \mcal X_{n,d}^\mak d \times \mcal Y_d^\mak d$ and $\mcal Y_d^\mak d \times \mcal Y_d^\mak d$, respectively.
Consider the following sets of matrices with entries being non-negative integer as following:
\begin{align*}
\Xi_{n,d}=&\bigg\{A \in  {\rm Mat}_{\mbb Z \times \mbb Z }(\mbb N) \ \Big| \ a_{i,j}=a_{1-i,1-j}=a_{i+n,j+n}, \forall i,j \in \mbb Z, \sum_{1\leq i\leq n,j}a_{ij}=D \bigg\},\\
\Pi_{n,d}=&\bigg\{B \in  {\rm Mat}_{\mbb Z \times\mbb Z}(\mbb N) \ \Big| \ b_{i,j}=b_{1-i,1-j}=b_{i+n,j+D}, \forall i,j \in \mbb Z, \sum_jb_{ij}=1 \bigg\},\\
\Sigma_d=&\bigg\{\sigma \in  {\rm Mat}_{\mbb Z \times \mbb Z}(\mbb N)|\sigma_{i,j}=\sigma_{1-i,1-j}=
\sigma_{i+D,j+D}, \forall i,j \in \mbb{Z}, \sum_{i}a_{ij}=\sum_{j}a_{ij}=1,\\ &\sum_{i\leq 0<j}a_{ij}+\sum_{i\leq d<j}a_{ij}={\rm even} \bigg\}.\\
\end{align*}

We define a map $\widetilde{\Phi}$ from the set of ${\rm SO}_{F}(V)$-orbits in $\mcal X_{n,d}^\mak d\times \mcal X_{n,d}^\mak d$ to $\Xi_{n,d}$ by sending the orbit ${\rm SO}_F(V)\cdot(L,L')$ to $A=(a_{ij})_{i,j\in \mbb Z}$, where
\begin{equation}
a_{ij}=\bigg|\frac{L_{i-1}+L_i\cap L_j'}{L_{i-1}+L_i\cap L_{j-1}'}\bigg|.
\end{equation}

For any $g \in {\rm O}_F(V)\backslash {\rm SO}_F(V)$,
there exists a bijection between $\mcal X_{n,d}^{\mak d,1}$ and $\mcal X_{n,d}^{\mak d,2}$ defined by $L\mapsto g\cdot L$,
which yields the following bijections.
\begin{equation}\label{sign}
\begin{split}
{\rm SO}_F(V)\backslash \mcal X_{n,d}^{\mak d,1}\times \mcal X_{n,d}^{\mak d,2}\rightarrow {\rm SO}_F(V)\backslash \mcal X_{n,d}^{\mak d,2}\times \mcal X_{n,d}^{\mak d,1};\\
{\rm SO}_F(V)\backslash \mcal X_{n,d}^{\mak d,1}\times \mcal X_{n,d}^{\mak d,1}\rightarrow {\rm SO}_F(V)\backslash \mcal X_{n,d}^{\mak d,2}\times \mcal X_{n,d}^{\mak d,2}.
\end{split}
\end{equation}
Moreover, corresponding pairs on both sides under the bijections in \eqref{sign} get sent to the same matrix by $\widetilde{\Phi}$.
In corresponding to \eqref{sign}, we define a sign function
\begin{equation}
{\rm sgn}(i,j)=
\begin{cases}
+  &  \mbox{if}\ (i,j)=(1,1),(1,2);\\
-  &  \mbox{if}\ (i,j)=(2,1),(2,2).
\end{cases}
\end{equation}
We set
\begin{equation*}
\Xi_{n,d}^+=\Xi_{n,d}\times \{+\}; \ \ \Xi_{n,d}^-=\Xi_{n,d}\times \{-\}.
\end{equation*}
Moreover, we set that
\begin{equation*}
\Xi_\mak d=\Xi_{n,d}^+\sqcup \Xi_{n,d}^-.
\end{equation*}
Elements in $\Xi_\mak d$ will be called signed matrices.
To each signed matrix $\mak a=(A, \alpha)\in \Xi_{\mak d}$, we define its row and columu sum vectors ${\rm ro}(\mak a)=({\rm ro}(\mak a)_i)_{i\in \mbb Z}$ and ${\rm co}(\mak a)=({\rm co}(\mak a)_i)_{i\in \mbb Z}$ in $\Lambda_{n,d}^\mak d$ by
\begin{align*}
{\rm ro}(\mak a)_i=\sum_{j\in \mbb Z}a_{ij}, \ \  {\rm co}(\mak a)_j=\sum_{i\in \mbb Z}a_{ij},  \ \  \forall i,j \in \mbb Z.
\end{align*}

Similarly, we define maps $\Phi'$ and $\Phi''$ from the sets of ${\rm SO}_F(V)$-orbits in $\mcal X_{n,d}^\mak d \times \mcal Y_d^\mak d$
and $\mcal Y_d^\mak d\times \mcal Y_d^\mak d$ to $\Pi_{n,d}$ and $\Sigma_d$,
respectively, by sending the orbit ${\rm SO}_F(V)\cdot(L,\Lambda)$ to $B=(b_{ij})_{i,j\in \mbb Z}$
and ${\rm SO}_F(V)\cdot(\Lambda,\Lambda')$ to $\sigma=(\sigma_{ij})_{i,j\in \mbb Z}$, where
\begin{equation*}
b_{ij}=\bigg|\frac{L_i+L_i\cap \Lambda_j}{L_{i-1}+L_i\cap \Lambda_{j-1}}\bigg|, \ \ \sigma_{ij}=\bigg|\frac{\Lambda_i+\Lambda_i\cap \Lambda_j'}{\Lambda_{i-1}+\Lambda_i\cap \Lambda_{j-1}'}\bigg|.
\end{equation*}

By a similar argument as for \cite[Proposition 3.2.2]{FLLLW20}, we have the following proposition.

\begin{proposition}\label{isomorphism}
 The map $\wit{\Phi}$ induces a bijection
\begin{equation*}
\Phi:  {\rm SO}_F(V)\backslash\mcal X_{n,d}^\mak d\times \mcal X_{n,d}^\mak d \rightarrow \Xi_\mak d,  \ \ \ {\rm SO}_F(V)\cdot(L,L')\mapsto (A, \alpha),
\end{equation*}
where $A=\wit{\Phi}({\rm SO}_F(V) \cdot (L,L'))$ and $\alpha={\rm sgn}(i,j)$, if $(L,L') \in \mcal X_{n,d}^{\mak d,i}\times \mcal X_{n,d}^{\mak d,j}$.
\end{proposition}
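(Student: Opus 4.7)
The plan is to first establish the analogue of \cite[Proposition 3.2.2]{FLLLW20} one level up, namely that $\wit{\Phi}$ descends to a bijection from ${\rm O}_F(V)$-orbits to $\Xi_{n,d}$, and then to refine the result to ${\rm SO}_F(V)$-orbits by exploiting the index-two subgroup relation ${\rm O}_F(V)/{\rm SO}_F(V)\cong\mbb Z/2$ together with the splitting $\mcal X_{n,d}^\mak d=\mcal X_{n,d}^{\mak d,1}\sqcup\mcal X_{n,d}^{\mak d,2}$ and the bijections \eqref{sign}.

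First I would verify that the map $\wit{\Phi}$ is well defined on ${\rm O}_F(V)$-orbits and lands in $\Xi_{n,d}$: the periodicity $L_i=\var L_{i+n}$ and the duality $L_i^\ast=L_{n-i}$ translate into the identities $a_{ij}=a_{i+n,j+n}$ and $a_{ij}=a_{1-i,1-j}$ respectively, and the constraint $\sum_{1\le i\le n,j}a_{ij}=D$ records $|\mcal L/\var\mcal L|=D$. Then I would show bijectivity on ${\rm O}_F(V)$-orbits following the strategy used in the affine type $\mrm C$ case: given $A\in\Xi_{n,d}$, pick compatible $\mbb F$-bases of the subquotients $(L_i\cap L_j')/(L_{i-1}+L_i\cap L_{j-1}')$ and use the form $Q$ and the relations $L_i^\ast=L_{n-i}$, $(L'_j)^\ast=L'_{n-j}$ to propagate the choices across the duality involution $(i,j)\mapsto(1-i,1-j)$; this produces simultaneously a canonical representative for each matrix (surjectivity) and an element of ${\rm O}_F(V)$ carrying any two pairs with the same matrix into one another (injectivity).

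Next I would use \eqref{sign} to split each ${\rm O}_F(V)$-orbit into exactly two ${\rm SO}_F(V)$-orbits. Pick any $g\in{\rm O}_F(V)\setminus{\rm SO}_F(V)$; by the statement preceding \eqref{sign} the map $L\mapsto g\cdot L$ interchanges $\mcal X_{n,d}^{\mak d,1}$ and $\mcal X_{n,d}^{\mak d,2}$, so for $(L,L')\in\mcal X_{n,d}^{\mak d,i}\times\mcal X_{n,d}^{\mak d,j}$ the translate $g\cdot(L,L')$ lies in $\mcal X_{n,d}^{\mak d,3-i}\times\mcal X_{n,d}^{\mak d,3-j}$. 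The four pieces $\mcal X_{n,d}^{\mak d,i}\times\mcal X_{n,d}^{\mak d,j}$ are ${\rm SO}_F(V)$-stable and are fused in pairs by ${\rm O}_F(V)$, and \eqref{sign} shows that $\wit{\Phi}$ takes the same value on matched orbits. Attaching the sign $\alpha={\rm sgn}(i,j)$ then separates the two halves of each ${\rm O}_F(V)$-orbit and yields the claimed bijection $\Phi:{\rm SO}_F(V)\backslash\mcal X_{n,d}^\mak d\times\mcal X_{n,d}^\mak d\to\Xi_\mak d$.

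The main obstacle is the first step. The injectivity on ${\rm O}_F(V)$-orbits requires a delicate bookkeeping of bases compatible with the symmetric bilinear form and with the affine periodicity at the same time; in particular one must show that the two potential ``$\pm$'' ambiguities coming from (a) the orthogonal symmetry and (b) the parity condition defining $\mcal X_{n,d}^{\mak d,1}$ vs.\ $\mcal X_{n,d}^{\mak d,2}$ can be decoupled, so that only the latter remains after passing to ${\rm SO}_F(V)$. Once this is done, the sign refinement and the $\mbb Z/2$ case analysis that complete the proof are bookkeeping rather than genuine difficulty.
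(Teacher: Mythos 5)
Your proposal is correct and follows essentially the same route as the paper, which simply invokes the analogue of \cite[Proposition 3.2.2]{FLLLW20} for the ${\rm O}_F(V)$-level bijection onto $\Xi_{n,d}$ and then uses the bijections \eqref{sign} together with the sign function to split each ${\rm O}_F(V)$-orbit into the two ${\rm SO}_F(V)$-orbits labelled by $\pm$. Your write-up just makes the index-two refinement explicit, which the paper leaves to the reader.
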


For any $\mak a=(A, \alpha) \in \Xi_\mak d$, we denote by $\mcal{O}_\mak a$ the orbit corresponding to a signed matrix $\mak a$, and introduce the following notation.
\begin{equation}
{\rm sup}(\mak a)=(i,j),  \ \  {\rm if} \  \mcal O_\mak a \subset \mcal X_{n,d}^{\mak d,i}\times \mcal X_{n,d}^{\mak d,j}.
\end{equation}

The maps $\Phi'$ and $\Phi''$ are bijective from Proposition \ref{isomorphism} and \cite[Proposition 4.2]{CFW24}.

\subsection{The double centralizer}
Let $v$ be an indeterminate and $\mcal A=\mbb Z[v,v^{-1}]$.
We define
\begin{align*}
\mbf S_{n,d;\mcal A}^\mak d=\mcal A_{{\rm SO}_F(V)}(\mcal X_{n,d}^\mak d\times \mcal X_{n,d}^\mak d)
\end{align*}
to be the space of ${\rm SO}_F(V)$-invariant $\mcal A$-valued (generic) functions
on $\mcal X_{n,d}^\mak d\times \mcal X_{n,d}^\mak d$.
For $\mak a\in \Xi_\mak d$,
we denote by $e_\mak a$ the characteristic function of the corresponding orbit $\mcal O_\mak a$.
Then $\mbf S_{n,d;\mcal A}^\mak d$ is a free $\mcal A$-module with basis $\{e_\mak a \mid \mak a\in \Xi_\mak d \}$.
Let $\mak a, \mak b, \mak c \in \Xi_\mak d$.
We fix $L, L' \in \mcal X_{n,d}^\mak d$ such that $(L, L') \in \mcal O_\mak c$ and set
\begin{equation*}
g_{\mak a, \mak b, \mak c; q}=\sharp \big\{\widetilde{L}\in \mcal X_{n,d}^ \mak d \mid (L, \widetilde{L}) \in \mcal O_\mak a, (\widetilde{L}, L') \in \mcal O_\mak b \big\}.
\end{equation*}
It is well known that there exists a polynomial $g_{\mak a, \mak b, \mak c} \in \mbb Z[v,v^{-1}]$ such that $g_{\mak a, \mak b, \mak c ;q}=g_{\mak a, \mak b, \mak c}|_{v=\sqrt{q}}$.
We define a convolution product $\ast$ as follows:
\begin{equation*}
e_\mak a\ast e_\mak b=\sum_\mak c g_{\mak a, \mak b, \mak c}e_\mak c.
\end{equation*}
Then we set
\begin{equation*}
\mbf S_{n,d}^\mak d=\mbb Q(v)\otimes_\mcal A\mbf S _{n,d;\mcal A }^\mak d.
\end{equation*}
The algebras $\mbf S_{n,d;\mcal A}^\mak d$ and $\mbf S_{n,d}^\mak d$ are called the Schur algebras of affine type $\mrm D$.

Similarly, we define the free $\mcal A$-modules
\begin{align*}
\mbf V_{n,d;\mcal A}^\mak d=\mcal A_{{\rm SO}_F(V)}(\mcal X_{n,d}^\mak d\times \mcal Y_d^\mak d) \ \ \  {\rm and} \ \ \ \mbf H_{d;\mcal A}^\mak d=\mcal A_{{\rm SO}_F(V)}(\mcal Y_d^\mak d\times \mcal Y_d^\mak d).
\end{align*}
A similar convolution product gives an associative algebra structure on $\mbf H_{d;\mcal A}^\mak d$.
According to \cite[Proposition 4.6]{CFW24}, we have $\mbf H_{d;\mcal A}^\mak d$ is isomorphic to the (extended) affine Hecke algebra $\mbf H_{\wit {\mrm D}}$.

There is a left $\mbf S_{n,d;\mcal A}^\mak d$-action and a right $\mbf H_{d;\mcal A}^\mak d$-action on $\mbf V_{n,d;\mcal A}^\mak d$.
For any $f \in \mbf S_{n,d; \mcal A}, g \in \mbf V_{n,d;\mcal A}^\mak d$ and $h \in \mbf H_{d; \mcal A}^\mak d$,
\begin{equation*}
f\cdot g (L,\Lambda)=\sum_{L' \in \mcal X_{n,d}^\mak d}f(L,L')g(L',\Lambda),  \ \
g\cdot h (L,\Lambda)=\sum_{\Lambda' \in \mcal Y_d^\mak d}g(L,\Lambda')h(\Lambda',\Lambda).
\end{equation*}
Moreover, these two actions commute and hence we
have the following $\mcal A$-algebra homomorphisms:
\begin{equation*}
\mbf S_{n,d;\mcal A}^\mak d\rightarrow {\rm End}_{\mbf H_{d;\mcal A}^\mak d}(\mbf V_{n,d;\mcal A}^\mak d) \ \ \ {\and} \ \ \ \mbf H_{d;\mcal A}^\mak d \rightarrow {\rm End}_{\mbf S_{n,d;\mcal A}^\mak d}(\mbf V_{n,d;\mcal A}^\mak d).
\end{equation*}
By \cite[Theorem 2.1]{P09}, we have the following double centralizer property.
\begin{lemma}
${\rm End}_{\mbf H_{d;\mcal A}^\mak d}(\mbf V_{n,d;\mcal A}^\mak d)\simeq \mbf S_{n,d;\mcal A}^\mak d$ and ${\rm End}_{\mbf S_{n,d;\mcal A}^\mak d}(\mbf V_{n,d;\mcal A}^\mak d) \simeq \mbf H_{d;\mcal A}^\mak d$, if $n\leq D$.
\end{lemma}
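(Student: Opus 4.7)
The plan is to follow the strategy of Pouchin \cite{P09}, which establishes the analogous double centralizer in affine type $\mrm A$, and verify that his argument goes through in the type $\mrm D$ setting with the geometric data introduced in Section 2.1.

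First I would decompose $\mbf V_{n,d;\mcal A}^\mak d$ as a right $\mbf H_{d;\mcal A}^\mak d$-module. The ${\rm SO}_F(V)$-orbit decomposition $\mcal X_{n,d}^\mak d=\bigsqcup_{\mbf a\in\Lambda_{n,d}^\mak d}\mcal X_{n,d}^\mak d(\mbf a)$ refines to a decomposition $\mbf V_{n,d;\mcal A}^\mak d=\bigoplus_{\mbf a}\mbf V(\mbf a)$, where $\mbf V(\mbf a)$ consists of ${\rm SO}_F(V)$-invariant functions supported on $\mcal X_{n,d}^\mak d(\mbf a)\times\mcal Y_d^\mak d$. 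Picking a base point in $\mcal X_{n,d}^\mak d(\mbf a)$, a parabolic-induction argument using the lattice description of $\mcal Y_d^\mak d$ from \cite{CFW24} identifies $\mbf V(\mbf a)\simeq x_{\mbf a}\mbf H_{d;\mcal A}^\mak d$, where $x_{\mbf a}$ is a Hecke-algebra element attached to the ``Young subgroup'' stabilizing that base point.

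With this decomposition in hand, the standard identity
\[
{\rm End}_{\mbf H_{d;\mcal A}^\mak d}\Big(\bigoplus_{\mbf a}x_{\mbf a}\mbf H_{d;\mcal A}^\mak d\Big)\simeq\bigoplus_{\mbf a,\mbf b}x_{\mbf b}\mbf H_{d;\mcal A}^\mak d x_{\mbf a}
\]
reduces the first isomorphism to identifying $x_{\mbf b}\mbf H_{d;\mcal A}^\mak d x_{\mbf a}$ with the $\mcal A$-span of those basis elements $e_\mak a$ of $\mbf S_{n,d;\mcal A}^\mak d$ with $({\rm ro}(\mak a),{\rm co}(\mak a))=(\mbf b,\mbf a)$. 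This is an $\mbb F$-point count that follows directly from Proposition \ref{isomorphism} combined with the observation that both convolution products enumerate intermediate objects (lattice chains or flags) in the same ambient vector space. For the second isomorphism, the hypothesis $n\leq D$ guarantees that every parahoric/Young-type idempotent of $\mbf H_{d;\mcal A}^\mak d$ is obtained from some $x_{\mbf a}$, so that $\mbf V_{n,d;\mcal A}^\mak d$ is a faithful $\mbf H_{d;\mcal A}^\mak d$-module. Combining faithfulness with the first isomorphism and a standard double centralizer principle, applied at each specialization $v=\sqrt q$ for an odd prime power $q$ and propagated to the generic ring $\mcal A$ using $\mcal A$-freeness of all modules in sight, yields the second isomorphism.

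The main technical obstacle is the type $\mrm D$ sign phenomenon encoded in $\mcal X_{n,d}^\mak d=\mcal X_{n,d}^{\mak d,1}\sqcup\mcal X_{n,d}^{\mak d,2}$ together with the bijections \eqref{sign}: one must reconcile this with the parabolic-induction picture so that the endomorphism algebra recovers both the $\Xi_{n,d}^+$ and $\Xi_{n,d}^-$ summands of $\Xi_\mak d$. This is precisely the step where the \emph{extended} affine Hecke algebra $\mbf H_{\wit{\mrm D}}$, rather than the ordinary one, must be used; concretely, one has to verify that the action of ${\rm O}_F(V)\setminus{\rm SO}_F(V)$ implements on both sides of the double centralizer the diagram automorphism responsible for the extension.
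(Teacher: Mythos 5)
The paper offers no argument of its own here: the lemma is obtained by invoking \cite[Theorem 2.1]{P09}, which is a general double-centralizer theorem for convolution algebras attached to a group acting on two sets with finitely many orbits. Your proposal unfolds essentially that argument (permutation-module decomposition of $\mbf V_{n,d;\mcal A}^\mak d$, identification of $\Hom$-spaces with spans of double cosets, matching with orbits via Proposition \ref{isomorphism}, and a faithfulness argument for the second isomorphism), so in substance the route is the same; yours is just more explicit than the paper's one-line citation.

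One step is imprecise as written. The decomposition of $\mbf V_{n,d;\mcal A}^\mak d$ into cyclic right $\mbf H_{d;\mcal A}^\mak d$-modules must be taken over the actual ${\rm SO}_F(V)$-orbits on $\mcal X_{n,d}^\mak d$, not over $\Lambda_{n,d}^\mak d$: since $\mcal X_{n,d}^\mak d=\mcal X_{n,d}^{\mak d,1}\sqcup\mcal X_{n,d}^{\mak d,2}$ is an ${\rm SO}_F(V)$-stable decomposition, each stratum $\mcal X_{n,d}^\mak d(\mbf a)$ is in general a union of two orbits, so your $\mbf V(\mbf a)$ is a direct sum of \emph{two} cyclic modules rather than a single $x_{\mbf a}\mbf H_{d;\mcal A}^\mak d$. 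It is exactly this doubling that makes ${\rm End}_{\mbf H_{d;\mcal A}^\mak d}(\mbf V_{n,d;\mcal A}^\mak d)$ match the two summands $\Xi_{n,d}^{\pm}$ of $\Xi_\mak d$, in accordance with the sign function and the bijections \eqref{sign}. You do flag the sign phenomenon at the end, but you attribute its resolution to the \emph{extended} nature of $\mbf H_{\wit{\mrm D}}$; that is a conflation. The extension of the Hecke algebra is a property of the pair $(\mcal Y_d^\mak d,\mcal Y_d^\mak d)$ alone (it is already built into $\mbf H_{d;\mcal A}^\mak d=\mcal A_{{\rm SO}_F(V)}(\mcal Y_d^\mak d\times\mcal Y_d^\mak d)$), whereas the $\pm$ doubling comes from the two components of $\mcal X_{n,d}^\mak d$ and is recovered purely through the finer orbit decomposition of the permutation module --- which is precisely how Pouchin's orbit-by-orbit formulation handles it automatically. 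With that correction your counting argument and the treatment of the second isomorphism go through as planned.
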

\subsection{Canonical bases of $\mbf S_{n,d;\mcal A}^\mak d$}
In this subsection, we assume that the ground field is an algebraic field is an algebraic closure $\overline{\mbb F}$ of $\mbb F$ when we talk about the dimension of a ${\rm SO}_F(V)$-orbit or its stabilizer.

Fix $L \in \mcal X_{n,d}^\mak d$.
For $\mak a \in \Xi_\mak d$, we set
\begin{align*}
X_\mak a^L=\left\{ L' \in \mcal X_{n,d}^\mak d \mid (L,L') \in \mcal O_\mak a \right\}.
\end{align*}
This is an orbit of the stabilizer subgroup ${\rm Stab}_{{\rm SO}_F(V)}(L)$ of ${\rm SO}_F(V)$, and one can associate to it a structure of quasi-projective algebraic variety. Now, we compute its dimension $d(\mak a)$.

We have the following analogue of \cite[lemma 4.1.1]{FLLLW20}
\begin{lemma}\label{matric}
 Fix $L \in \mcal X_{n,d}^\mak d$.
 For $\mak a \in \Xi_\mak d$,
 the dimension of $X_\mak a^L$ is given by
\begin{equation}\label{Dimen eq}
  d(\mak a)=\frac{1}{2}\bigg(\sum_{\substack{i\geq k, j<l\\ i\in [1,n]}}a_{ij}a_{kl}-\sum_{i\geq 1>j}a_{ij}-\sum_{i\geq r+1>j}a_{ij}\bigg).
\end{equation}
\end{lemma}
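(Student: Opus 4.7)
\emph{Proof plan.} My plan is to mirror the strategy of \cite[Lemma 4.1.1]{FLLLW20}, adapting from affine type $\mrm{C}$ to affine type $\mrm{D}$. The core of the argument is (i) compute the dimension of the corresponding type $\mrm{A}$ orbit variety (ignoring the orthogonality); (ii) realize $X_{\mak a}^L$ as the fixed locus of an involution on the type $\mrm{A}$ variety coming from $L'^\ast_i=L'_{n-i}$; (iii) collect the correction terms contributed by the two self-dual positions of that involution.

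For step (i), I would follow the standard BLM/Ginzburg--Vasserot argument \cite{BLM90,GV93,Lu99}. One realizes the ambient type $\mrm{A}$ affine partial flag variety $\wit{X}^L_A$ as an iterated fibration in which each stage builds $L'_j/L'_{j-1}$ as a subspace of a suitable subquotient of $L$. A standard parameter count (using the periodicity $a_{ij}=a_{i+n,j+n}$ to restrict $i$ to one period) yields $\dim \wit{X}^L_A = \sum_{i\geq k,\, j<l,\, i\in [1,n]} a_{ij}a_{kl}$, which is the bulk term inside the parentheses of \eqref{Dimen eq}.

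For steps (ii)--(iii), the duality condition $L'^\ast_i=L'_{n-i}$ presents $X_{\mak a}^L$ as the fixed locus of an involution on $\wit{X}^L_A$ that identifies the data at indices $j$ and $n-j$. For generic $j$ the two halves of the data determine each other, producing the overall factor $\tfrac{1}{2}$. At the two self-dual positions (which, under the present indexing, sit between $j=0,1$ and between $j=r,r+1$), the involution forces the graded piece $L'_j/L'_{j-1}$ to be isotropic with respect to a naturally induced nondegenerate symmetric form on the corresponding subquotient, so an ordinary Grassmannian is replaced by an orthogonal Grassmannian whose dimension is smaller by the total row length along that column. Summing these losses yields exactly the two subtractions $\sum_{i\geq 1>j}a_{ij}$ and $\sum_{i\geq r+1>j}a_{ij}$.

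The main technical obstacle is the careful bookkeeping at the two self-dual hyperplanes: identifying the nondegenerate form induced on each graded piece and computing the dimension of the resulting orthogonal Grassmannian so that the losses match the two claimed sums. A minor subtlety is checking that the dimension is unaffected when passing from ${\rm O}_F(V)$-orbits to ${\rm SO}_F(V)$-orbits (which holds because ${\rm SO}_F(V)$ has finite index in ${\rm O}_F(V)$, so the sign decomposition refines orbits without cutting dimension) and that the formula is compatible with the symmetries $a_{ij}=a_{1-i,1-j}=a_{i+n,j+n}$; both amount to routine re-indexing.
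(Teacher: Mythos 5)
Your plan is correct and coincides with what the paper does: the paper gives no proof of Lemma \ref{matric} at all, merely asserting it as the analogue of \cite[Lemma 4.1.1]{FLLLW20}, and your outline is precisely the adaptation of that argument (type $\mathrm{A}$ iterated-fibration count for the bulk term $\sum a_{ij}a_{kl}$, halving via the duality $L_i'^{\ast}=L_{n-i}'$, and isotropic-Grassmannian corrections at the two self-dual nodes $0$ and $r$ producing the two subtracted sums, with the sign of the correction being negative in type $\mathrm{D}$ as opposed to positive in type $\mathrm{C}$). Your closing remarks on the ${\rm O}_F(V)$ versus ${\rm SO}_F(V)$ issue and on compatibility with the symmetries of $\Xi_{\mathfrak d}$ address the only points where the adaptation is not verbatim.
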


 For any $\mak a \in \Xi_\mak d$, let
 \begin{equation*}
 [\mak a]=v^{-d(\mak a)}e_\mak a.
 \end{equation*}

 Denote by $A^t$ the transposition matrix of $A$. For any $\mak a=(A,\alpha) \in \Xi_\mak d$, we define $\mak a^t=(A^t,\alpha')$, where
\begin{align*}
\alpha'=
\begin{cases}
\alpha  &  \mbox{if \ sup}\ (\mak a)=(1,1),(2,2);\\
-\alpha  &  \mbox{otherwise}.
\end{cases}
\end{align*}
\begin{remark}
For any $\mak a \in \Xi_\mak d$, we have
\begin{align*}
d(\mak a)-d(\mak a^t)=\frac{1}{4}\sum_{i\in [1,n]}a_{ij}^2-\frac{1}{4}\sum_{j\in [1,n]}a_{ji}^2.
\end{align*}
Hence the $\mbb Q(v)$-linear involution $\Psi:\mbf S_{n,d}^\mak d\rightarrow \mbf S_{n,d}^\mak d$ defined by $[\mak a]\mapsto [\mak a^t]$ is an algebra anti-automorphism.
\end{remark}

Define a partial order $"\leq"$ on $\Xi_\mak d$ by $\mak a \leq \mak b$ if $\mcal O_\mak a \subset \overline{\mcal O_\mak b}$.
For any $\mak a=(A, \alpha), \mak b=(B, \alpha')$ in $\Xi_\mak d$,
we say that $\mak a\preceq \mak b$ if and only if $\alpha=\alpha'$ and the following condition hold.
\begin{align*}
\sum_{k\geq i, l\leq j }a_{k,l} \leq \sum_{k\geq i, l\leq j}b_{k,l},  \ \ \forall i>j.
\end{align*}
Since the Bruhat order of affine type $\mrm D$ is compatible with the Bruhat order of affine type $\mrm A$,
we see that the partial order "$\leq$" is compatible with (though possibly weaker than) the Bruhat order of affine type $\mrm D$.

Let $IC_\mak a$ be the intersection cohomology complex of the closure $\overline{X_\mak a^L}$ of $X_\mak a^L$,
taken in certain ambient algebraic variety over $\overline{\mbb F}$, such that the restriction of the stratum $IC_\mak a$ to $X_\mak a^L$ is the constant sheaf on $X_\mak a^L$.
We refer to \cite{BBD82} for the precise definition of intersection complexes.
The restriction of the $i$-th cohomology sheaf $\mcal H_{X_\mak b^L}^i(IC_\mak a)$ of $IC_\mak a$ to $X_\mak b^L$ for $\mak b \leq \mak a$ is a trivial local system, whose rank is denoted by $n_{\mak b,\mak a,i}$.
 We set
\begin{align*}
\{\mak a\}_d=\sum_{\mak b\leq \mak a}P_{\mak b,\mak a}[\mak b], \ \  {\rm where} \ P_{\mak b,\mak a}=\sum_{i\in \mbb Z}n_{\mak b,\mak a,i}v^{i-d_\mak a+d_\mak b}.
\end{align*}
The polynomials $P_{\mak b,\mak a}$ satisfy
\begin{align*}
P_{\mak a,\mak a}=1, \ \ {\rm and}\ P_{\mak b,\mak a} \in v^{-1}\mbb Z[v^{-1}] \ {\rm for \ any}\ \mak b\leq \mak a.
\end{align*}
Recall that the set $\{[\mak a] \mid \mak a \in \Xi_\mak d\}$ forms a $\mbb Q(v)$-basis of $\mbf S_{n,d}^\mak d$.
We have the following proposition.

\begin{proposition}
The set $\left\{\{\mak a\}_d \mid \mak a \in \Xi_\mak d \right\}$ forms an $\mcal A$-basis of $\mbf S_{n,d,\mcal A}^\mak d$ and a $\mbb Q(v)$-basis of $\mbf S_{n,d}^\mak d$, called the canonical basis. Moreover, the structure constants of $\mbf S_{n,d}^\mak d$ with respect to the canonical basis are in $\mbb N[v,v^{-1}]$.
\end{proposition}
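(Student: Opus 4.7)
The plan is to deduce the basis statement from unitriangularity of the transition matrix, and the positivity of structure constants from the Beilinson–Bernstein–Deligne decomposition theorem applied to convolution of intersection cohomology sheaves, in parallel with the arguments of \cite{BLM90, FLLLW20}.

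First I would observe that $\{e_\mak a \mid \mak a \in \Xi_\mak d\}$ is an $\mcal A$-basis of $\mbf S_{n,d;\mcal A}^\mak d$ by definition, hence so is $\{[\mak a]\}$ since the rescalings $v^{-d(\mak a)}$ are units in $\mcal A$. The defining relation $\{\mak a\}_d = [\mak a] + \sum_{\mak b < \mak a} P_{\mak b,\mak a}[\mak b]$ together with $P_{\mak a,\mak a} = 1$ and $P_{\mak b,\mak a} \in v^{-1}\mbb Z[v^{-1}] \subset \mcal A$ realizes $\{\{\mak a\}_d\}$ as the image of $\{[\mak a]\}$ under a matrix that is unitriangular with respect to the partial order $\leq$ on $\Xi_\mak d$. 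Such a matrix is automatically invertible over $\mcal A$ (one can solve triangularly using the partial order, since on each finite-rank submatrix the determinant is $1$), so $\{\{\mak a\}_d\}$ is an $\mcal A$-basis of $\mbf S_{n,d;\mcal A}^\mak d$ and, by base change to $\mbb Q(v)$, a $\mbb Q(v)$-basis of $\mbf S_{n,d}^\mak d$.

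For positivity of structure constants, the strategy is geometric. One realizes each basis element $\{\mak a\}_d$ as the characteristic function (via the sheaf-function correspondence over $\overline{\mbb F}$) of the intersection cohomology complex $IC_\mak a$ on $\overline{X_\mak a^L}$, normalized so that its restriction to $X_\mak a^L$ is the constant sheaf. The convolution product on $\mbf S_{n,d;\mcal A}^\mak d$ lifts to a convolution product on the corresponding equivariant derived category via the usual diagram $\mcal X_{n,d}^\mak d \times \mcal X_{n,d}^\mak d \leftarrow \mcal X_{n,d}^\mak d \times \mcal X_{n,d}^\mak d \times \mcal X_{n,d}^\mak d \rightarrow \mcal X_{n,d}^\mak d \times \mcal X_{n,d}^\mak d$. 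The convolution morphism is proper on the relevant orbit closures, so the Beilinson–Bernstein–Deligne decomposition theorem \cite{BBD82} yields a decomposition
\begin{equation*}
IC_\mak a \ast IC_\mak b \cong \bigoplus_{\mak c,\, i} IC_\mak c[i]^{\oplus m_{\mak a,\mak b,\mak c,i}},
\end{equation*}
with multiplicities $m_{\mak a,\mak b,\mak c,i} \in \mbb N$. Translating through the sheaf-function correspondence, the structure constants in the canonical basis become $\sum_i m_{\mak a,\mak b,\mak c,i}\, v^i \in \mbb N[v,v^{-1}]$, establishing the positivity claim.

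The main obstacle is that $\mcal X_{n,d}^\mak d$ is not a finite-type variety but an ind-variety of infinite type (each $X_\mak a^L$ is quasi-projective but the full orbit space is not), so the decomposition theorem cannot be invoked directly. I would resolve this exactly as in \cite{Lu99,Lu00,FLLLW20}: realize each orbit closure $\overline{X_\mak a^L}$ as a union of finite-dimensional pieces and check that all IC-sheaves, as well as the convolution morphisms restricted to relevant preimages, descend to compatible finite-dimensional proper maps between quasi-projective varieties over $\overline{\mbb F}$. Once this finite-dimensional approximation is set up, combined with Lemma \ref{matric} for the dimension formula, purity of the IC-sheaves and the decomposition theorem apply verbatim, and the positivity assertion follows. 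A secondary technical point is the compatibility with the partial order $\leq$ used in the definition of $P_{\mak b,\mak a}$, but this is automatic from the construction of IC-sheaves and the standard property $P_{\mak a,\mak a} = 1$, $P_{\mak b,\mak a} \in v^{-1}\mbb Z[v^{-1}]$ for $\mak b < \mak a$.
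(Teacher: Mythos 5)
Your proposal is correct and matches the argument the paper implicitly relies on: the paper states this proposition without proof, following the standard pattern of \cite{BLM90,Lu99,FLLLW20}, namely unitriangularity of the transition matrix $(P_{\mak b,\mak a})$ over $\mcal A$ for the basis claim (using that each orbit closure contains only finitely many orbits, so the triangular system is finite and invertible), and the decomposition theorem for convolution of IC-sheaves, via finite-dimensional approximation of the ind-variety $\mcal X_{n,d}^\mak d$, for the positivity of structure constants. Both halves of your argument are the intended ones and contain no gaps beyond the finite-dimensional-approximation bookkeeping you already flag.
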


\subsection{Multiplication formulas of Chevalley generators}
Given $i,j \in \mbb Z$, let $E^{ij}$ be the $\mbb Z\times \mbb Z$ matrix whose $(k,l)$-th entries are $1$, for all $(k,l)\equiv(i,j) \ {\rm mod} \ n$, and $0$ otherwise.
We set
\begin{equation*}
E_\theta^{ij}=E^{ij}+E^{1-i,1-j}.
\end{equation*}

We now give the multiplication formulas of Chevalley  generators.
The following lemma is a counterpart of \cite[Proposition 3.5]{Lu99}.
\begin{lemma} \label{formulas}
Let $\mak a=(A, \alpha) \in \Xi_\mak d$.
Assume that $h \in [1,r-1]$.

$(a)$ Assume that $\mak b=(B, \alpha) \in \Xi_\mak d$ such that $B-E_\theta^{h,h+1}$ is a diagonal matrix and ${\rm co}(\mak b)={\rm ro}(\mak a)$.
Then
\begin{equation}
e_\mak b\ast e_\mak a=\sum_{p\in \mbb Z,a_{h+1,p}\geq 1}v^{2\sum_{j>p}a_{h,j}}\frac{v^{2(1+a_{h,p})}-1}{v^2-1}e_{(A+E_\theta^{h,p}-
E_\theta^{h+1,p}, \alpha)}.
\end{equation}

$(b)$ Assume that $\mak c=(C, \alpha) \in \Xi_\mak d$ such that $C-E_\theta^{h+1,h}$ is a diagonal matrix and ${\rm co}(\mak c)={\rm ro}(\mak a)$.
Then
\begin{equation}
e_\mak c\ast e_\mak a=\sum_{p\in \mbb Z,a_{h,p}\geq 1}v^{2\sum_{j<p}a_{h+1,j}}\frac{v^{2(1+a_{h+1,p})}-1}{v^2-1}e_{(A+E_\theta^{h+1,p}-
E_\theta^{h,p}, \alpha)}.
\end{equation}

$(c)$ Assume that $\mak n=(D, -\alpha) \in \Xi_\mak d$  such that $D-E_\theta^{0,1}$ is a diagonal matrix and ${\rm co}(\mak n)={\rm ro}(\mak a)$.
Then
\begin{equation}
\begin{split}
e_\mak n\ast e_\mak a&=\sum_{p\leq 0,a_{0,p}\geq 1}v^{2\sum_{j<p}a_{1,j}}\frac{v^{2(1+a_{1,p})}-1}{v^2-1}e_{(A+E_\theta^{1,p}-
E_\theta^{0,p}, -\alpha)}\\
&+\sum_{p> 0 ,a_{0,p}\geq 1}v^{2(\sum_{j<p}a_{1,j}-1)}\frac{v^{2(1+a_{1,p})}-1}{v^2-1}e_{(A+E_\theta^{1,p}-
E_\theta^{0,p}, -\alpha)}.
\end{split}
\end{equation}

$(c')$ Assume that $\mak n'=(D', -\alpha) \in \Xi_\mak d$  such that $D-E_\theta^{r,r+1}$ is a diagonal matrix and ${\rm co}(\mak n')={\rm ro}(\mak a)$.
Then
\begin{equation}
\begin{split}
e_{\mak n'}\ast e_\mak a&=\sum_{p\leq r,a_{r,p}\geq 1}v^{2\sum_{j<p}a_{r+1,j}}\frac{v^{2(1+a_{r+1,p})}-1}{v^2-1}e_{(A+E_\theta^{r+1,p}-
E_\theta^{r,p}, -\alpha)}\\
&+\sum_{p> r,a_{r,p}\geq 1}v^{2(\sum_{j<p}a_{r+1,j}-1)}\frac{v^{2(1+a_{r+1,p})}-1}{v^2-1}e_{(A+E_\theta^{r+1,p}-
E_\theta^{r,p}, -\alpha)}.
\end{split}
\end{equation}
\end{lemma}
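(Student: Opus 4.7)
The plan is to adapt Lusztig's geometric convolution computation in affine type $\mrm{A}$ \cite[Proposition 3.5]{Lu99} to the present affine type $\mrm{D}$ setting, with two extra features to track: the lattice duality $L_i^\ast = L_{n-i}$ and the sign datum $\alpha \in \{\pm\}$ carried by signed matrices via Proposition \ref{isomorphism}. I fix a pair $(L, L'')$ in the candidate target orbit; then the convolution value $(e_\bullet \ast e_\mak a)(L, L'')$ equals the number of intermediate $\tilde L \in \mcal X_{n,d}^\mak d$ with $(L, \tilde L)$ in the first orbit and $(\tilde L, L'') \in \mcal O_\mak a$. In each of (a)--(c'), the hypothesis that the first matrix differs from a diagonal only by one $E_\theta^{\cdot, \cdot}$, combined with ${\rm co}(\cdot) = {\rm ro}(\mak a)$, forces $\tilde L_i = L_i$ for all $i$ outside the $n$-orbit of the two affected indices. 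Consequently, $\tilde L$ is determined globally by one datum at a single representative index --- a line $W$ in $L_{h+1}/L_h$ (or a hyperplane in $L_h/L_{h-1}$) --- since the reflected partner is forced by $L_i^\ast = L_{n-i}$ and $L_i = \var L_{i+n}$.

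For parts (a) and (b), the modifying index $h \in [1, r-1]$ is interior to the fundamental domain, so the two reflected copies of $W$ sit in disjoint quotients and contribute independently. Consider the filtration $F_j := (L_h + L_{h+1} \cap L''_j)/L_h$ of $L_{h+1}/L_h$; its successive one-dimensional quotients have ranks $a_{h+1, p}$ at step $p$ by definition of $\mak a$. Declaring $(\tilde L, L'') \in \mcal O_{(A + E_\theta^{h, p} - E_\theta^{h+1, p}, \alpha)}$ is equivalent to requiring $W \subset F_p$ but $W \not\subset F_{p-1}$, and a standard Schubert-cell count over $\mbb F_q$ yields $q^{\sum_{j > p} a_{h, j}} \cdot (q^{1+a_{h, p}} - 1)/(q - 1)$ such lines; the first factor counts the non-pivot freedoms and the second counts lines in the pivot stratum. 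Setting $q = v^2$ and summing over $p$ with $a_{h+1, p} \geq 1$ gives (a). The proof of (b) is dual: replace $W$ by a hyperplane in $L_h / L_{h-1}$, swap the roles of rows $h$ and $h+1$ and of the directions $j > p$ versus $j < p$, and apply the same Schubert argument.

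Parts (c) and (c') carry two additional features, both rooted at the boundary. First, the sign flip: a modification at $\tilde L_0$ (respectively $\tilde L_r$) propagates via $L_i^\ast = L_{n-i}$ to a modification at index $n$ (resp.\ $r$), which flips the parity of $|\mcal M / L_r \cap \mcal M| + |\mcal L / L_n \cap \mcal L|$ by exactly one. Hence $\tilde L$ sits in the opposite component $\mcal X_{n,d}^{\mak d, 3-i}$ from $L$, and via the definition of ${\rm sgn}(i, j)$ this produces the sign $-\alpha$ on both the Chevalley matrix and the output orbit. Second, the case split $p \leq 0$ versus $p > 0$ (resp.\ $p \leq r$ versus $p > r$): the summand $E^{1, 0}$ inside $E_\theta^{0, 1}$ couples the counting at position $(0, p)$ to its reflected counterpart at $(1, 1-p)$, and the boundary term $-\sum_{i \geq 1 > j} a_{ij}$ in the dimension formula \eqref{Dimen eq} changes by different amounts depending on whether $p$ and $1-p$ lie on the same side of the cut $j = 1$ (the case $p \leq 0$, giving the unshifted coefficient) or on opposite sides ($p > 0$, producing the extra $v^{-2}$). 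The identical argument with boundary $j = r + 1$ governs (c').

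The main obstacle is the bookkeeping in the last paragraph: verifying both that the two reflected modifications combine into a single parity flip (not a pair that cancels) and that the dimension shift at the boundary cutoff is exactly $v^{-2}$ and not some other power. Both items reduce to direct computations from \eqref{Dimen eq} and from the parity definitions of $\mcal X_{n,d}^{\mak d, 1}$ and $\mcal X_{n,d}^{\mak d, 2}$; once confirmed, the counts in (c), (c') collapse to the Schubert-cell argument already established in (a), (b), applied at the boundary index with the case split built in.
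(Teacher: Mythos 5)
Your overall strategy coincides with the paper's: cases $(a)$ and $(b)$ are deferred to the point count of \cite[Proposition 3.5]{Lu99}, and cases $(c)$, $(c')$ are handled by an explicit lattice count at the boundary index together with a parity argument for the sign flip. The sign discussion is fine. The problem is in the one place where something genuinely new has to be computed, namely the origin of the extra factor $v^{-2}$ distinguishing $p>0$ from $p\leq 0$ in $(c)$. You attribute it to the boundary term $-\sum_{i\geq 1>j}a_{ij}$ in the dimension formula \eqref{Dimen eq}. That cannot be the mechanism: the lemma is stated for the unnormalized characteristic functions $e_\mak a$, so the coefficients are the raw structure constants $g_{\mak a,\mak b,\mak c;q}$, i.e.\ actual numbers of intermediate lattices, and no renormalization by $v^{-d(\cdot)}$ enters. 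The dimension formula only becomes relevant later, when passing to $[\mak a]=v^{-d(\mak a)}e_\mak a$ in Proposition \ref{divided formula}.

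The correct source of the case split, which you gesture at but do not carry out, is the point count itself. Writing $E_\theta^{0,p}=E^{0,p}+E^{1,1-p}$ and $E_\theta^{1,p}=E^{1,p}+E^{0,1-p}$, the target matrix $A+E_\theta^{1,p}-E_\theta^{0,p}$ has its row-$0$ entries perturbed at \emph{both} columns $p$ and $1-p$. When one counts the intermediate lattices $L_0''$ with $L_{-1}+L_0\cap L_{-p}'\subset L_0''\overset{1}{\subset} L_0$, the exponent $|L_0/(L_{-1}+L_0\cap L_{-p}')|$ is a partial row sum over the range $j\geq 1-p$ of these perturbed entries; this range contains exactly one of the two perturbed columns when $p\leq 0$ (giving $\sum_{j\geq 1-p}a_{0,j}+1$) and both of them when $p>0$ (the $+1$ and $-1$ cancel, giving $\sum_{j\geq 1-p}a_{0,j}$). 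That single unit of discrepancy in the exponent is precisely the $q^{-1}=v^{-2}$ in the statement. Since you explicitly defer this verification as ``the main obstacle'' and the mechanism you propose for it is the wrong one, the proof of $(c)$ and $(c')$ is incomplete as written; supplying the lattice count above (as the paper does via the sets $Z_p$) closes the gap.
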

\begin{proof}
The proof of cases $(a)$ and $(b)$ is
essentially the same as the proof of \cite[Proposition 3.5]{Lu99}.
We now prove the case $(c)$ as follows. Without loss of generality, we assume that $\alpha=+$.
Let $L=(L_i)_{i\in \mbb Z}, L'=(L_i')_{i \in \mbb Z}$ and $L''=(L_i'')_{i\in \mbb Z}$ be filtrations of lattices in $\mcal X_{n,d}^\mak d$
such that $(L,L')\in \mcal O_\mak n$ and $(L',L'') \in \mcal O_\mak a$.
Then $L_i=L_i'$ for $i \in [1,r]$ and $|L_0/L_0\cap L_0'|=1$.
Moreover, we have $L\in \mcal X_{n,d}^{\mak d,2}$.
Assume that
\begin{align*}
L_0\cap L_0'\cap L_j''=L_0'\cap L_j'', \ {\rm for}\ j<p; \ \  L_0\cap L_0'\cap L_j''\neq L_0'\cap L_j'', \ {\rm for}\ j\geq p.
\end{align*}
Then $(L,L'')\in \mcal O_{(A+E_\theta^{1,p}+E_\theta^{0,p},-)}$,
and we have the following:
\begin{equation*}
e_\mak n\ast e_\mak a=\sum_{p\in \mbb Z, a_{0,p}\geq 1}G_{\mak a,\mak n;p}
e_{(A+E_\theta^{1,p}-E_{\theta}^{0,p}, -)}.
\end{equation*}
Now, we calculate the $G_{\mak a,\mak n;p}$.
Let $L=(L_i)_{i\in \mbb Z}, L'=(L_i')_{i\in \mbb Z} \in \mcal X_{n,d}^\mak d$
such that $(L, L') \in \mcal O_{(A+E_\theta^{1,p}+E_\theta^{0,p}, -)}$.
Consider the set $Z_p$ of filtrations of lattices $L''=(L_i'')_{i\in \mbb Z}\in \mcal X_{n,d}^\mak d$ such that $(L,L'')\in \mcal O_\mak n$ and $(L'', L') \in \mcal O_\mak a$.
Then  $L_i=L_i''$ for $i \in [1,r]$ and $|L_0/L_0\cap L_0''|=1$.
Moreover, we have
\begin{align*}
L_0\cap L_0''\cap L_j'=L_0\cap L_j', \ {\rm for}\ j<1-p;  \ \ L_0\cap L_0''\cap L_j'\neq L_0\cap L_j', \ {\rm for} \ j\geq 1-p.
\end{align*}
If $p\leq 0$, then
\begin{align*}
\sharp(Z_p)&=\sharp\{L_0'' \mid L_{-1}+L_0\cap L_{-p}'\subset L_0''\cap L_0\}-\sharp\{L_0'' \mid L_{-1}+L_0\cap L_{1-p}'\subset L_0''\cap L_0\}\\
&=(q-1)^{-1}(q^{|L_0/L_{-1}+L_0\cap L_{-p}'|}-q^{|L_0/L_{-1}+L_0\cap L_{1-p}'|})\\
&=(q-1)^{-1}(q^{\sum_{j\geq 1-p}a_{0,j}+1}-q^{\sum_{j> 1-p}a_{0,j}}).
\end{align*}
If $p> 0$, then
\begin{align*}
\sharp(Z_p)&=\sharp\{L_0'' \mid L_{-1}+L_0\cap L_{-p}'\subset L_0''\cap L_0\}-\sharp\{L_0''\mid L_{-1}+L_0\cap L_{1-p}'\subset L_0''\cap L_0\}\\
&=(q-1)^{-1}(q^{|L_0/L_{-1}+L_0\cap L_{-p}'|}-q^{|L_0/L_{-1}+L_0\cap L_{1-p}'|})\\
&=(q-1)^{-1}(q^{\sum_{j\geq 1-p}a_{0,j}}-q^{\sum_{j> 1-p}a_{0,j}-1}).
\end{align*}
This implies case $(c)$ is verified.
The case $(c')$ is the counterpart of the case $(c)$, and hence we omit it.
\end{proof}

Given two integers $a,b$ and $b\geq 0$.
We set
\begin{align*}
[a]=\frac{v^{2a}-1}{v^{2}-1}, \ \ \ {\rm and} \ \ \  [a,b]=\prod_{i \in [1,b]}\frac{v^{2(a-i+1)}-1}{v^{2i}-1}.
\end{align*}
We define a bar involution $'-'$ on $\mcal A$ by $\overline{v}=v^{-1}$.

By an induction process, we now generalize Lemma \ref{formulas} to some multiplication formulas by $``{\rm divided \ powers}"$ of Chevalley generators.
The proof involves lengthy mechanical computations and is hence skipped.
\begin{proposition}\label{divided formula}
Let $\mak a=(A, \alpha) \in \Xi_\mak d$.
Assume that $R \in \mbb N$ and $h \in [1,r-1]$.

$(a)$ Assume that $\mak b=(B, \alpha) \in \Xi_\mak d$ such that $B-RE_\theta^{h,h+1}$ is a diagonal matrix and ${\rm co}(\mak b)={\rm ro}(\mak a)$.
Then
\begin{equation}\label{divided formula 1}
e_\mak b\ast e_\mak a=\sum_{t}v^{2\sum_{j>u}a_{h,j}t_u}\prod_{u\in \mbb Z}
[a_{h,u}+t_u, t_u] e_{(A+\sum_{u\in \mbb Z}t_u(E_\theta^{h,u}-E_\theta^{h+1,u}), \alpha)},
\end{equation}
where the sum runs over all sequences $t=(t_u)_{u\in \mbb Z} \in \mbb N ^{\mbb Z}$ such that $0\leq t_u \leq a_{h+1,u}$ and $\sum_{u\in \mbb Z}t_u=R$.
Moreover,
\begin{align*}
[\mak b]\ast[\mak a]=\sum_{t}v^{\beta(t)}\prod_{u \in \mbb Z}\overline{[a_{h,u}+t_u, t_u]}[(A+\sum_{u\in \mbb Z}(E_\theta^{h,u}-E_\theta^{h+1,u}), \alpha)],
\end{align*}
where the sum is over $t$ as in \eqref{divided formula 1} and
\begin{align*}
\beta(t)=\sum_{j\geq u}a_{h,j}t_u-\sum_{j>u}a_{h+1,j}t_u+\sum_{u<u'}t_ut_{u'}.
\end{align*}
$(b)$ Assume that $\mak c=(C, \alpha) \in \Xi_\mak d$ such that $C-RE_\theta^{h+1,h}$ is a diagonal matrix and ${\rm co}(\mak c)={\rm ro}(\mak a)$.
Then
\begin{equation} \label{divided formula 2}
e_\mak c\ast e_\mak a=\sum_{t}v^{2\sum_{j<u}a_{h+1,j}t_u}\prod_{u\in \mbb Z}
[a_{h+1,u}+t_u, t_u]e_{(A+\sum_{u \in \mbb Z}t_u(E_\theta^{h+1,u}-
E_\theta^{h,u}), \alpha)},
\end{equation}
where the sum runs over all sequences $t=(t_u)_{u\in \mbb Z} \in \mbb N^{\mbb Z}$ such that $0\leq t_u \leq a_{h,u}$ and $\sum_{u\in \mbb Z}t_u=R$.
Moreover,
\begin{align*}
[\mak c]\ast[\mak a]=\sum_{t}v^{\beta'(t)}\prod_{u \in \mbb Z}
\overline{[a_{h+1,u}+t_u, t_u]}[(A-\sum_{u \in \mbb Z}
t_u(E_\theta^{h,u}-E_\theta^{h+1,u}), \alpha)],
 \end{align*}
where the sum is over $t$ as in \eqref{divided formula 2} and
\begin{align*}
\beta'(t)=\sum_{j\leq u}a_{h+1,j}t_u-\sum_{j<u}a_{h,j}t_u+\sum_{u<u'}t_ut_{u'}.
\end{align*}
$(c)$  Assume that $\mak n=(D, -\alpha) \in \Xi_{\mak d}$ satisfies $D-E_{\theta}^{0,1}$ is a diagonal matrix and ${\rm co}(\mak n)={\rm ro}(\mak a)$.
 Then
\begin{equation}
\begin{split}
[\mak n]\ast[\mak a]&=\sum_{p\leq 0,a_{0,p}\geq 1}v^{\sum_{j\leq p}
a_{1,j}-\sum_{j<p}a_{0,j}}\overline{[a_{1,p}+1]}
[(A-E_{\theta}^{0,p}+E_{\theta}^{1,p},-\alpha)]\\
&+\sum_{p> 1,a_{0,p}\geq 1}v^{\sum_{j\geq p}
a_{0,j}-\sum_{j>p}a_{1,j}-1}\overline{[a_{1,p}+1]}
[(A-E_{\theta}^{0,p}+E_{\theta}^{1,p},-\alpha)].
\end{split}
\end{equation}

$(c')$  Assume that $\mak n'=(D', -\alpha) \in \Xi_\mak d$ satisfies $D'-E_{\theta}^{r,r+1}$ is a diagonal matrix and ${\rm co}(\mak n')={\rm ro}(\mak a)$.
Then
\begin{equation}
\begin{split}
[\mak n']\ast[\mak a]&=\sum_{p\leq r,a_{r,p}\geq 1}v^{\sum_{j\leq p}
a_{r+1,j}-\sum_{j<p}a_{r,j}}\overline{[a_{r+1,p}+1]}
[(A-E_{\theta}^{r,p}+E_{\theta}^{r+1,p},-\alpha)]\\
&+\sum_{p >r,a_{r,p}\geq 1}v^{\sum_{j\geq p}
a_{r,j}-\sum_{j>p}a_{r+1,j}-1}\overline{[a_{r+1,p}+1]}
[(A-E_{\theta}^{r,p}+E_{\theta}^{r+1,p},-\alpha)].
\end{split}
\end{equation}
\end{proposition}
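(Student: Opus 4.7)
The plan is to prove Proposition \ref{divided formula} by induction on $R$ for parts (a) and (b), with Lemma \ref{formulas} providing the base case $R=1$. For parts (c) and (c'), no divided power actually appears (only one copy of $E_\theta^{0,1}$ or $E_\theta^{r,r+1}$ is involved), so the statement amounts to a renormalization of Lemma \ref{formulas}(c), (c') from the basis $\{e_\mak a\}$ to the basis $\{[\mak a]\}$, accomplished by tracking the dimension shift via \eqref{Dimen eq}.

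For part (a), the inductive step would exploit the associativity identity
\[
e_{\mak b_{R+1}} \ast e_\mak a \;=\; C_{R+1}^{-1}\cdot e_{\mak b_1}\ast \bigl(e_{\mak b_R}\ast e_\mak a\bigr),
\]
where $\mak b_k$ denotes a signed matrix with $B_k - k E_\theta^{h,h+1}$ diagonal and matching row/column sums, and $C_{R+1}$ is the $v$-combinatorial prefactor arising from isolating $e_{\mak b_R}$ inside the expansion of $e_{\mak b_1}^{\ast R}$ (equivalently, $e_{\mak b_1}^{\ast R} = [R]_{v^2}!\,e_{\mak b_R}$ up to a known power of $v$). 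Applying the inductive hypothesis to $e_{\mak b_R}\ast e_\mak a$ produces a sum over sequences $t\in \mbb N^{\mbb Z}$ of weight $R$; a further application of Lemma \ref{formulas}(a) introduces one new index $p$, and the substitution $t'_u = t_u + \delta_{u,p}$ recasts the double sum as a single sum over sequences $t'$ of weight $R+1$. The product of prefactors $[a_{h,u}+t_u,t_u]$ collapses via the $v^2$-Vandermonde identity for $v$-binomials, yielding the claimed expression. The passage to the $[\cdot]$-basis is a separate bookkeeping step: for $\mak c=(A+\sum_u t_u(E_\theta^{h,u}-E_\theta^{h+1,u}),\alpha)$ one computes $d(\mak b)+d(\mak a)-d(\mak c)$ from \eqref{Dimen eq} and verifies that the residual exponent of $v$ is exactly $\beta(t)$. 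Part (b) is formally dual, with rows $h$ and $h+1$ exchanged and summation ranges reversed.

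The main obstacle is the careful accounting of diagonal entries of $A$, where $E_\theta^{h,u}$ and $E_\theta^{h+1,u}$ interact in the same slot, coupled with the folding imposed by the periodic symmetry $a_{i,j}=a_{1-i,1-j}=a_{i+n,j+n}$; this folding is the essential new feature distinguishing the affine type $\mrm D$ computation from the affine type $\mrm A$ prototype in \cite{Lu99} and from the affine type $\mrm C$ analog in \cite{FLLLW20}. For parts (c) and (c'), the only genuine subtlety is that the shift of $d(\mak a)$ under $\mak a\mapsto (A-E_\theta^{0,p}+E_\theta^{1,p},-\alpha)$ depends on whether $p$ lies on the left or right of the fixed point $j\mapsto 1-j$ of the involution; this is precisely what produces the split into $p\leq 0$ versus $p>1$ (respectively $p\leq r$ versus $p>r$) and the asymmetric exponents of $v$ appearing in the final formulas. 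Once the single-generator formulas in Lemma \ref{formulas}(c), (c') are multiplied by $v^{-d(\mak n)-d(\mak a)+d(\mak c')}$ and the exponents are simplified using \eqref{Dimen eq}, the two displayed expressions emerge directly.
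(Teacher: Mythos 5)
Your proposal is correct and follows exactly the route the paper intends: the paper itself gives no details, stating only that the proposition is obtained from Lemma \ref{formulas} ``by an induction process'' whose ``lengthy mechanical computations'' are skipped, and your induction on $R$ via the divided-power identity $e_{\mak b_1}^{\ast R}=[R]_{v^2}!\,e_{\mak b_R}$ (up to a power of $v$) together with the dimension bookkeeping from \eqref{Dimen eq} is the standard way to carry that out. Your observation that parts $(c)$ and $(c')$ involve no divided power and are merely renormalizations of Lemma \ref{formulas}$(c)$, $(c')$ to the $[\cdot]$-basis is also accurate.
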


\subsection{The leading term}
In the expression $``[\mak a]+{\rm lower \ terms}"$ below, the $``{\rm lower \ terms}"$ represents a linear combination of elements strictly less than $\mak a$ with respect to the partial order $``\preceq"$.
\begin{lemma}\label{ZXY}
Assume that $h \in [1,r-1]$, and $R$ is a positive integer.

$(a)$ Assume that $\mak b=(B,\alpha) \in \Xi_\mak d$ and $B-RE_\theta^{h,h+1}$ is diagonal.
Suppose that $\mak a=(A,\alpha) \in \Xi_\mak d$ such that ${\rm co}(\mak b)={\rm ro}(\mak a)$ and satisfies the following conditions:
\begin{equation}
a_{h,j}=0, \ \forall j\geq k; \ a_{h+1,k}=R, \ a_{h+1,j}=0, \ \forall j>k, \ {\rm for} \ k\geq h+1.
\end{equation}
Then we have
\begin{align*}
[\mak b]\ast [\mak a]=[(A+R(E_{\theta}^{h,k}-E_\theta^{h+1,k}),\alpha)]+{\rm lower \ terms}.
\end{align*}

$(b)$ Assume that $\mak c=(C,\alpha) \in \Xi_\mak d$ and $C-RE_\theta^{h+1,h}$ is diagonal.
Suppose that $\mak a=(A,\alpha) \in \Xi_\mak d$ such that ${\rm co}(\mak c)={\rm ro}(\mak a)$ and satisfies the following conditions:
\begin{equation}
a_{h,j}=0,\  \forall j< k; \ a_{h,k}=R, \ a_{h+1,j}=0, \ \forall j\leq k, \ {\rm for} \ k\leq h.
\end{equation}
Then we have
\begin{align*}
[\mak c]\ast [\mak a]=[(A-R(E_{\theta}^{h,k}-E_\theta^{h+1,k}),\alpha)]+{\rm lower \ terms}.
\end{align*}

$(c_1)$ Assume that $\mak n=(D, \alpha) \in \Xi_\mak d$, if $R$ is even and $D-RE_\theta^{0,1}$ is diagonal.
Suppose that $\mak a=(A,\alpha) \in \Xi_\mak d$ such that ${\rm co}(\mak n)={\rm ro}(\mak a)$ and satisfies the following conditions:
\begin{equation}\label{chen}
a_{0,j}=0, \ \forall j\geq k; \ a_{1,k}=R, \ a_{1,j}=0, \ \forall j>k, \ {\rm for}\ k\geq 1.
\end{equation}
Then we have
\begin{align*}
[\mak n]\ast [\mak a]=[(A+R(E_{\theta}^{0,k}-E_\theta^{1,k}),\alpha)]+{\rm lower \ terms}.
\end{align*}

$(c_1')$ Assume that $\mak n=(D,\alpha) \in \Xi_\mak d$, if $R$ is odd and $D-RE_\theta^{0,1}$ is diagonal.
Suppose that $\mak a=(A,-\alpha) \in \Xi_\mak d$ such that ${\rm co}(\mak n)={\rm ro}(\mak a)$ and satisfies the following conditions:
\begin{equation}
a_{0,j}=0, \ \forall j\geq k; \ a_{1,k}=R, \ a_{1,j}=0,\  \forall j>k, \ {\rm for}\ k\geq 1.
\end{equation}
Then we have
\begin{align*}
[\mak n]\ast [\mak a]=[(A+R(E_{\theta}^{0,k}-E_\theta^{1,k}),\alpha)]+{\rm lower \ terms}.
\end{align*}

$(c_2)$ Assume that $\mak n'=(D',\alpha) \in \Xi_\mak d$, if $R$ is even and $D-RE_\theta^{r,r+1}$ is diagonal.
Suppose that $\mak a=(A,\alpha) \in \Xi_\mak d$ such that ${\rm co}(\mak n')={\rm ro}(\mak a)$ and satisfies the following conditions:
\begin{equation}
a_{r,j}=0, \ \forall j\geq k;\  a_{r+1,k}=R,\  a_{r+1,j}=0,\  \forall j>k,  \ {\rm for}\ k\geq r+1.
\end{equation}
Then we have
\begin{align*}
[\mak n']\ast [\mak a]=[(A+R(E_{\theta}^{r,k}-E_\theta^{r+1,k}),\alpha)]+{\rm lower \ terms}.
\end{align*}

$(c_2')$ Assume that $\mak n'=(D',\alpha) \in \Xi_\mak d$, if $R$ is odd and $D-RE_\theta^{r,r+1}$ is diagonal.
Suppose that $\mak a=(A,-\alpha) \in \Xi_\mak d$ such that ${\rm co}(\mak n')={\rm ro}(\mak a)$ and satisfies the following conditions:
\begin{equation}
a_{r,j}=0,\  \forall j\geq k; \ a_{r+1,k}=R, \ a_{r+1,j}=0, \ \forall j>k,  \ {\rm for}\ k\geq r+1.
\end{equation}
Then we have
\begin{align*}
[\mak n']\ast [\mak a]=[(A+R(E_{\theta}^{r,k}-E_\theta^{r+1,k}),\alpha)]+{\rm lower \ terms}.
\end{align*}
\end{lemma}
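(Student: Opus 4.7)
The plan is to expand each product using the relevant formula of Proposition \ref{divided formula} and then exploit the very restrictive hypotheses on $\mak a$ to isolate the extremal term. For case $(a)$, Proposition \ref{divided formula}(a) writes $[\mak b] \ast [\mak a]$ as a sum indexed by tuples $t = (t_u)_{u \in \mbb Z}$ with $0 \leq t_u \leq a_{h+1,u}$ and $\sum_u t_u = R$. The hypotheses $a_{h+1,j} = 0$ for $j > k$ together with $a_{h+1,k} = R$ force $t$ to be supported in $(-\infty, k]$ with $t_k \leq R$, so the extremal tuple $t^* = R \delta_k$ is admissible and produces the matrix $A^* = A + R(E_\theta^{h,k} - E_\theta^{h+1,k})$. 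Using $a_{h,k} = 0$ one checks $\overline{[a_{h,k} + R, R]} = \overline{[R, R]} = 1$, and the $v$-exponent $\beta(t^*)$ vanishes (both hypotheses $a_{h,j}=0$ for $j \geq k$ and $a_{h+1,j}=0$ for $j > k$ are needed to kill the two sums in $\beta$), so $[(A^*, \alpha)]$ appears with coefficient exactly $1$. Case $(b)$ is the mirror image, obtained by swapping the roles of rows $h$ and $h+1$.

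For each non-extremal $t$, setting $\mak a(t) = (A + \sum_u t_u(E_\theta^{h,u} - E_\theta^{h+1,u}), \alpha)$, I would verify $\mak a(t) \prec (A^*, \alpha)$ by comparing the partial sums $\sum_{k' \geq i,\ l \leq j} a_{k', l}$. A short computation using $E_\theta^{h,u} = E^{h,u} + E^{1-h,1-u}$ shows that these partial sums differ only when $i \in \{h+1,\, 1-h\}$, and in those cases the difference admits a simple closed form in $t$ that is non-positive on the region $i > j$. Strict inequality at some $(i,j)$ is obtained from $t \neq t^*$ via a dichotomy: if some $t_u > 0$ with $u \leq h$, strict inequality holds at $(h+1, u)$; if $t$ is supported entirely in $[h+1, k]$ but $t_k < R$, strict inequality holds at $(1-h, -u)$ for a suitable $u \in [h+1, k-1]$ with $t_u > 0$. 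This covers all $t \neq t^*$ and completes cases $(a), (b)$.

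The sign-flipping cases $(c_1), (c_1'), (c_2), (c_2')$ follow the same blueprint, but first require promoting Proposition \ref{divided formula}(c), (c')---which is stated only at the single-generator level $R=1$---to a divided-power version by induction on $R$. Iterated application accumulates a sign factor $(-1)^R$, which matches the statement: the same sign for $R$ even in $(c_1), (c_2)$, the opposite sign for $R$ odd in $(c_1'), (c_2')$. Once the divided-power formula is in place, identification of the extremal term and the $\preceq$-strict decrease for all other summands proceed verbatim, now using the boundary rows $i = 1, 0$ (respectively $i = r+1, r$) in place of $i = h+1, 1-h$. The main obstacle will be this inductive step: the piecewise $v$-exponents in Proposition \ref{divided formula}(c), (c') split by $p \leq 0$ versus $p > 0$ (and by $p \leq r$ versus $p > r$), and they must be carefully reassembled after $R$ iterations so that their cumulative contribution at $t^*$ collapses to give coefficient $1$, while the intermediate lower terms produced at each step do not propagate back to contaminate the leading coefficient.
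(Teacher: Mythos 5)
Your plan is correct and follows essentially the same route as the paper: cases $(a)$, $(b)$ are handled exactly as you describe (the paper defers to the analogous lemma of \cite{FLLLW20}), and for the $(c)$-cases the paper likewise reduces to the rank-one formula by an induction on $R$ — concretely, by writing $(e_{\mak b'}\ast e_{\mak b''})^{R/2}$ as $[2]\cdots[R]\,e_{\mak n}$ plus terms with smaller even $R'$, checking $(M',\alpha)\preceq(M,\alpha)$ by the same partial-sum computation you outline, and verifying the leading coefficient via the degree identity $d((M,\alpha))-d(\mak a)=d(\mak n)$, which is your "collapse of the cumulative $v$-exponents" in the normalized basis. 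No genuine gap; the only difference is organizational (the paper pins down the leading term directly rather than first writing out a closed divided-power formula for the boundary generators).
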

\begin{proof}
The proof of cases $(a)$ and $(b)$ is the same as the one for \cite[Lemma 4.4.1]{FLLLW20}.
For the case of $(c_1)$, we set that
\begin{align*}
M=A+R(E_\theta^{1,k}-E_\theta^{0,k}), \ \   M'=A+\sum_{u \in \mbb Z}
t_u(E_\theta^{1,u}-E_\theta^{0,u}),
\end{align*}
with $\sum_{u \in \mbb Z}t_u= R'$ for some even integer $R'\leq R$ and $t_u=0$ unless $u\in (1-k,k]$.
The $(r,s)$-th entry $m_{rs}$ of $M$ is
\begin{align*}
m_{rs}=a_{rs}+R\sum_{l \in \mbb Z}
\delta_{s,k+ln}(\delta_{r,ln}-\delta_{r,1+ln})-R\sum_{l \in \mbb Z}
\delta_{s,1-k+ln}(\delta_{r,ln}-\delta_{r,1+ln}).
\end{align*}
Note that
\begin{align}
\label{M-c}
 \sum_{r \leq i, s\geq j}&R \sum_{l \in \mbb Z}
 \delta_{s,k+ln}(\delta_{r,ln}-\delta_{r,1+ln}) \\
&=
\begin{cases}
R, & \mbox{if} \ i=l_1n, j\leq k+l_1n, \ \mbox{for some} \ l_1 \in \mbb Z,\\
0, & \mbox{otherwise}.
\end{cases}
\nonumber \\
\label{M-d}
\sum_{r \leq i, s\geq j} & -R\sum_{l \in \mbb Z}
\delta_{s,1-k+ln}(\delta_{r,ln}-\delta_{r,1+ln}) \\
&=
\begin{cases}
-R , & \mbox{if} \ i=l_1n, j\leq 1-k+l_1n, {\rm for \ some}\ l_1\in \mbb Z; \\
0, & \mbox{otherwise}.
\end{cases}
\nonumber
\end{align}
For $M'$, the $(r,s)$-th entry $m_{rs}'$ of $M'$ is
\begin{align*}
m_{rs}'=a_{rs}+\sum_{l \in \mbb Z}t_{s-ln}(\delta_{r,ln}-\delta_{r,1+ln})-\sum_{l \in \mbb Z}t_{1-s+ln}(\delta_{r,ln}-\delta_{r,1+ln}).
\end{align*}
Similarly, we have
\begin{align}
\label{M-a}
 \sum_{r \leq i, s\geq j}&\sum_{l \in \mbb Z}t_{s-ln}(\delta_{r,ln}-\delta_{r,1+ln})\\
&=
\begin{cases}
\sum_{s\geq j-l_1n}t_s, & \mbox{if} \ i=l_1n, \ \mbox{for some} \ l_1 \in \mbb Z,\\
0, & \mbox{otherwise}.
\end{cases}
\nonumber \\
\label{M-b}
\sum_{r \leq i, s\geq j} & \sum_{l \in \mbb Z}
-t_{1-s+ln}(\delta_{r,ln}-\delta_{r,1+ln}) \\
&=
\begin{cases}
-\sum_{s\leq 1+l_1n-j}t_s , & \mbox{if} \ i=l_1n, {\rm for \ some}\ l_1\in \mbb Z; \\
0, & \mbox{otherwise}.
\end{cases}
\nonumber
\end{align}
 We shall show that $(M',\alpha) \preceq (M,\alpha)$ when $\mak a$ is subject to the condition \eqref{chen}.
It suffices to show \eqref{M-c} $\geq$ \eqref{M-a} and \eqref{M-d} $\geq$ \eqref{M-b} when $i<j$.
Since \eqref{M-d} is always equal to $0$ when $i<j$, we have \eqref{M-d} $\geq$ \eqref{M-b}.
If $i=l_1n< j\leq k+l_1n$, then $\sum_{s\geq j-l_1n}t_s\leq R$.
On the other hand, if $i=l_1n$ and $j>k+l_1n$, we see that $\sum_{s\geq j-l_1n}t_s\geq \sum_{s\geq k+1}t_s=0$.
Thus, we have $(M',\alpha) \preceq (M,\alpha)$.

We now verify the coefficient of $(M,\alpha)$ is $1$.
We set $\mak b'=(B',\alpha)$ and $\mak b''=(B',-\alpha)$ such that ${\rm co}(\mak b')={\rm ro}(\mak a)$ and $B'-E_\theta^{0,1}$ is diagonal, then
\begin{equation*}
(e_{\mak b'}\ast e_{\mak b''})^{\frac{R}{2}}=[2]\cdots[R]
e_{\mak n}+\sum g_{\mak b',\mak b'';\mak t}e_{\mak t},
\end{equation*}
and
\begin{equation*}
(e_{\mak b'}\ast e_{\mak b''})^{\frac{R}{2}}\ast e_{\mak a}=[2]\cdots[R]
e_{(M,\alpha)}+{\rm lower \ terms},
\end{equation*}
where $\mak t=(T,\alpha)$ and $T-R'E_\theta^{0,1}$ is diagonal for some even integers $R'< R$.
By induction, we have
\begin{equation*}
e_{\mak t}\ast e_{\mak a}=\sum G_{\mak a',v}e_{\mak a'},
\end{equation*}
where $\mak a'$ is the form of $(M',\alpha)$ and $(M',\alpha)\prec (M,\alpha)$.
Then we have
\begin{equation*}
e_{\mak n}\ast e_{\mak a}=e_{(M,\alpha)}+{\rm lower \ terms}.
\end{equation*}
According to Lemma \ref{matric}, we see that
\begin{equation*}
\begin{split}
d(\mak n)&=Rd_{0,0}+\frac{R^2-R}{2},\\
d((M,\alpha))-d(\mak a)&=R\sum_{1-k\leq j<k}a_{0,j}-\frac{R^2+R}{2},
\end{split}
\end{equation*}
where $d_{0,0}$ is the $(0,0)$-th entry of the matrix $D$.
Since $R+d_{0,0}=\sum_{j\in \mbb Z}a_{0,j}$ and $a_{0,j}=0$ unless $1-k\leq j< k$, then
\begin{equation}
d((M,\alpha))-d(\mak a)=d(\mak n),
\end{equation}
which is as required.
The cases $(c_2), (c_1')$ and $(c_2')$ are the counterparts with $(c_1)$, and hence we skip it.
\end{proof}

\begin{lemma}\label{CQY}
Assume that $h \in [1,r-1]$, and $R$ is a positive integer.

$(a)$ Assume that $\mak b=(B,\alpha) \in \Xi_\mak d$ such that $B-RE_\theta^{h,h+1}$ is diagonal and $R=R_0+\cdots+R_l$.
Suppose that $\mak a=(A,\alpha) \in \Xi_\mak d$ such that ${\rm co}(\mak b)={\rm ro}(\mak a)$ and satisfies the following conditions:
\begin{align*}
a_{h,j}=0, \ \forall \ j\geq k;\  a_{h+1,k}\geq R_0,\  a_{h+1,k+i}=R_i,\  i\in [1,l], \ a_{h+1,j}=0, \ \forall j>k+l.
\end{align*}
Then we have
\begin{align*}
[\mak b]\ast[\mak a]
=[(A+\sum_{i=0}^lR_i(E_{\theta}^{h,k+i}-E_\theta^{h+1,k+i}),\alpha)]+
{\rm lower \ terms}.
\end{align*}

$(b)$ Assume that $\mak c=(C,\alpha) \in \Xi_\mak d$ such that $C-RE_\theta^{h+1,h}$ is diagonal and $R=R_0+\cdots+R_l$.
Suppose that $\mak a=(A,\alpha) \in \Xi_\mak d$ such that ${\rm co}(\mak c)={\rm ro}(\mak a)$ and satisfies the following conditions:
\begin{align*}
a_{h+1,j}=0, \ \forall \ j\leq k+l;\  a_{h,k+i}=R_i, i\in [0,l-1],\  a_{h,k+l}\geq R_l,\  \ a_{h,j}=0, \ \forall j<k.
\end{align*}
Then we have
\begin{align*}
[\mak c]\ast [\mak a]
=[(A-\sum_{i=0}^lR_i(E_{\theta}^{h,k+i}-E_\theta^{h+1,k+i}),\alpha)]+
{\rm lower \ terms}.
\end{align*}

$(c_1)$ Assume that $\mak n=(D,\alpha) \in \Xi_\mak d$, if $R$ is even and $D-RE_\theta^{0,1}$ is diagonal, we assume that $R=R_0+\cdots+R_l$.
Suppose that $\mak a=(A,\alpha) \in \Xi_\mak d$ such that ${\rm co}(\mak n)={\rm ro}(\mak a)$ and satisfies the following conditions:
\begin{align*}
a_{0,j}=0, \forall j\geq k\geq 1; a_{1,k}\geq R_0, a_{1,k+i}=R_i, \ i\in [1,l] ,\ a_{1,j}=0, \forall j>k+l.
\end{align*}
Then we have
\begin{align*}
[\mak n]\ast [\mak a] =[(A+\sum_{i=0}^lR_i(E_{\theta}^{0,k+i}-E_\theta^{1,k+i}),\alpha)]+{\rm lower \ terms}.
\end{align*}

$(c_1')$ Assume that $\mak n=(D,\alpha) \in \Xi_\mak d$, if $R$ is odd and $D-RE_\theta^{0,1}$ is diagonal, we assume that $R=R_0+\cdots+R_l$.
Suppose that $\mak a=(A,-\alpha) \in \Xi_\mak d$ such that ${\rm co}(\mak n)={\rm ro}(\mak a)$ and satisfies the following conditions:
\begin{align*}
a_{0,j}=0, \forall j\geq k\geq 1; a_{1,k}\geq R_0, \ a_{1,k+i}=R_i,\ i \in [1,l] \ a_{1,j}=0, \forall j>k+l.
\end{align*}
Then we have
\begin{align*}
[\mak n]\ast [\mak a] =[(A+\sum_{i=0}^lR_i(E_{\theta}^{0,k+i}-E_\theta^{1,k+i}),\alpha)]+{\rm lower \ terms}.
\end{align*}

$(c_2)$ Assume that $\mak n'=(D',\alpha) \in \Xi_\mak d$, if $R$ is even and $D-RE_\theta^{r,r+1}$ is diagonal,
we assume that $R=R_0+\cdots+R_l$.
Suppose that $\mak a=(A,\alpha) \in \Xi_\mak d$ such that ${\rm co}(\mak n')={\rm ro}(\mak a)$ and satisfies the following conditions:
\begin{align*}
a_{r,j}=0, \forall j\geq k\geq r+1; a_{r+1,k}\geq R_0, \ a_{r+1,k+i}=R_i,\ i \in [1,l] \ a_{r+1,j}=0, \forall j>k+l.
\end{align*}
Then we have
\begin{align*}
[\mak n']\ast [\mak a] =[(A+\sum_{i=0}^lR_i(E_{\theta}^{r,k+i}-E_\theta^{r+1,k+i}),\alpha)]+{\rm lower \ terms}.
\end{align*}

$(c_2')$ Assume that $\mak n'=(D',\alpha) \in \Xi_\mak d$,
if $R$ is odd and $D-RE_\theta^{r,r+1}$ is diagonal,
we assume that $R=R_0+\cdots+R_l$.
Suppose that $\mak a=(A,-\alpha) \in \Xi_\mak d$ such that ${\rm co}(\mak n')={\rm ro}(\mak a)$ and satisfies the following conditions:
\begin{align*}
a_{r,j}=0, \forall j\geq k\geq r+1; a_{r+1,k}\geq R_0, \ a_{r+1,k+i}=R_i,\ i \in [1,l] \ a_{r+1,j}=0, \forall j>k+l.
\end{align*}
Then we have
\begin{align*}
[\mak n']\ast [\mak a] =[(A+\sum_{i=0}^lR_i(E_{\theta}^{r,k+i}-E_\theta^{r+1,k+i}),\alpha)]+{\rm lower \ terms}.
\end{align*}
\end{lemma}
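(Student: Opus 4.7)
The plan is to proceed by induction on $l$, with Lemma \ref{ZXY} supplying the base case $l=0$ in all seven parts. I will describe the inductive step for part $(a)$ in detail; the remaining parts follow the same template with minor sign or parity adjustments.

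For the inductive step of $(a)$, I would factor the divided power $[\mak b]$ as a convolution product. Choose signed matrices $\mak b^{(1)},\mak b^{(2)}\in\Xi_\mak d$ with $B^{(1)}-(R-R_l)E_\theta^{h,h+1}$ and $B^{(2)}-R_lE_\theta^{h,h+1}$ diagonal and with matching row/column sums, so that a standard $v$-binomial identity deduced from Proposition \ref{divided formula}$(a)$ expresses $[\mak b]$ as a unit scalar multiple of $[\mak b^{(1)}]\ast[\mak b^{(2)}]$ plus terms that are strictly smaller in the order $\preceq$. Next, apply Lemma \ref{ZXY}$(a)$ to $[\mak b^{(2)}]\ast[\mak a]$: the hypotheses on $\mak a$ force the rightmost nonzero entry in row $h+1$ of $A$ to sit at column $k+l$ with value $R_l$, and row $h$ to vanish in columns $\geq k$, which is exactly the input Lemma \ref{ZXY}$(a)$ requires (with $R$ there replaced by $R_l$ and $k$ there by $k+l$). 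This yields $[\mak a']+\text{lower terms}$ with $\mak a'=(A+R_l(E_\theta^{h,k+l}-E_\theta^{h+1,k+l}),\alpha)$. The matrix $\mak a'$ again satisfies the hypotheses of the present lemma with parameters $(R_0,\ldots,R_{l-1})$, so the induction hypothesis applied to $[\mak b^{(1)}]\ast[\mak a']$ produces the claimed leading term.

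Parts $(b)$, $(c_1)$, $(c_1')$, $(c_2)$, $(c_2')$ are handled in the same way. Part $(b)$ is the left-right mirror of $(a)$ under transposition. The boundary-node cases use multiplication by $\mak n$ or $\mak n'$, which flips the sign component of the orbit because these generators exchange $\mcal X_{n,d}^{\mak d,1}$ and $\mcal X_{n,d}^{\mak d,2}$; the parity hypotheses on $R$ in $(c_1)$ vs.\ $(c_1')$ and $(c_2)$ vs.\ $(c_2')$ merely record whether the accumulated sign flip lands on $\alpha$ or on $-\alpha$, and the splitting $R=R_0+\cdots+R_l$ proceeds exactly as in $(a)$. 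The hardest part will be verifying that ``lower terms stay lower'' under the outer multiplication by $[\mak b^{(1)}]$: the non-leading summands produced by the inner application of Lemma \ref{ZXY} have the form $(A+\sum_u t_u(E_\theta^{h,u}-E_\theta^{h+1,u}),\alpha)$ with a strictly subdominant distribution $(t_u)$, and one must check, via the partial sums $\sum_{k\geq i,\,l\leq j}$ that define $\preceq$, that such matrices can only yield terms strictly below the target under the column-constrained shifts coming from Proposition \ref{divided formula}. Once this monotonicity of multiplication with respect to $\preceq$ is in place, the coefficient of the leading term equals $1$ by the same dimension computation that closes the proof of Lemma \ref{ZXY}.
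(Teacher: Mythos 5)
Your overall strategy differs from the paper's: the paper does not induct on $l$ by peeling off one column at a time, but instead reads off the unique $\preceq$-maximal term directly from the full divided-power expansion of Proposition \ref{divided formula} (all admissible distributions $t=(t_u)$ with $\sum_u t_u=R$, $t_u\le a_{h+1,u}$), using the inequality analysis from the proof of Lemma \ref{ZXY} to see that concentrating the weight as far right as possible dominates, and then closes by the dimension count $d(\mak y)-d(\mak a)=d(\mak n)$ to get leading coefficient $1$. Your column-by-column induction has a concrete gap at the inductive step. After the first peel you obtain $\mak a'=(A+R_l(E_\theta^{h,k+l}-E_\theta^{h+1,k+l}),\alpha)$, which has $a'_{h,k+l}=R_l\neq 0$; hence $\mak a'$ does \emph{not} satisfy the hypothesis $a'_{h,j}=0$ for all $j\ge k$ required either by Lemma \ref{ZXY}$(a)$ or by the induction hypothesis of the present lemma with parameters $(R_0,\dots,R_{l-1})$. (Peeling from the left instead violates the other vanishing hypothesis $a_{h+1,j}=0$ for $j>k$.) This is not cosmetic: the vanishing of row $h$ in columns $\ge k$ is exactly what makes the leading coefficient $v^{2\sum_{j>u}a_{h,j}t_u}\prod_u[a_{h,u}+t_u,t_u]$ equal to $1$; with $a'_{h,k+l}=R_l$ the next peel produces an extra factor $v^{2R_lR_{l-1}}$ in the $e$-basis that your argument never accounts for, so you would need a strengthened single-column lemma tracking these powers and a matching dimension computation.

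Two further points. First, your factorization $[\mak b]\sim[\mak b^{(1)}]\ast[\mak b^{(2)}]$ is not by a \emph{unit} scalar: Proposition \ref{divided formula}$(a)$ gives $e_{\mak b^{(1)}}\ast e_{\mak b^{(2)}}=[R,R-R_l]\,e_{\mak b}$ exactly, and $[R,R-R_l]$ is a quantum binomial coefficient, not $\pm v^m$; dividing by it is legitimate over $\mbb Q(v)$ but then these binomials must be shown to cancel against the $\prod_u[a_{h,u}+t_u,t_u]$ factors accumulating in the iterated products in order to recover leading coefficient exactly $1$. Second, for the cases $(c_1)$--$(c_2')$ there is no divided-power formula for $\mbf T_0,\mbf T_r$ in Proposition \ref{divided formula} (only $R=1$), so ``the splitting proceeds exactly as in $(a)$'' hides the parity bookkeeping and the $(e_{\mak b'}\ast e_{\mak b''})^{R/2}$-type computation that the paper performs inside Lemma \ref{ZXY}. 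To repair your proof you should abandon the peeling and argue as the paper does: expand $e_{\mak b}\ast e_{\mak a}$ in one stroke via Proposition \ref{divided formula}, verify via the partial sums defining $\preceq$ that $t_{k+i}=R_i$ is the unique maximal distribution, observe its $e$-coefficient is $1$ under the stated vanishing hypotheses, and finish with the dimension identity.
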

\begin{proof}
The proofs of cases $(a)$ and $ (b)$ is clear by the results of Lemma \ref{ZXY} and Proposition \ref{divided formula}.
We now prove the case $(c_1)$.
Let $\mak y=(A+\sum_{i=0}^lR_i(E_{\theta}^{0,k+i}-E_\theta^{1,k+i}),\alpha)$.
By Lemma \ref{ZXY},
we get that $[\mak y]$ is the leading term, we only to consider the coefficient of it.
By induction, we have
\begin{equation*}
e_\mak n\ast e_\mak a=e_\mak y+{\rm lower \ terms}.
\end{equation*}
 So it is left to verify $d(\mak y)-d(\mak a)=d(\mak n)$.
 We see that
 \begin{align*}
 d(\mak y)-d(\mak a)&=
 \sum_{u \in [0,l]} t_u(\sum_{1-k\leq j<k}a_{0,j}+\sum_{u'<u}t_{u'})-\sum_{u \in [0,l]}\frac{t_u^2+t_u}{2},\\
 d(\mak n)&=Rd_{0,0}+\frac{R^2-R}{2}.
 \end{align*}
 Moreover, we have $\sum_{1-k\leq j<k}a_{0,j}=d_{0,0}+t_0$, which is as required.
\end{proof}

\section{The Lusztig algebra $\mbf U_{n,d}^\mak d$ and its basis}
In this section, we show that there is a subalgebra $\widetilde{\mbf S}_{n,d}^\mak d$ of Schur algebra and formulate a coideal algebra type structure which involves $\widetilde{\mbf S}_{n,d}^\mak d$ and Shcur algebra of affine type $\mrm A$, and its behavior on the chevalley generators.
The canonical basis of the Lusztig algebra are shown to be compatible under the inclusion $\mbf U_{n,d}^\mak d \subset \mbf S_{n,d}^\mak d$.
\subsection{The Lusztig algebra $\mbf U_{n,d}^\mak d$}
Let $\mbf U_{n,d}^\mak d$ be the subalgebra of $\mbf S_{n,d}^\mak d$
generated by all elements $[\mak b]$ such that $\mak b=(B,\alpha)$
 satisfies $B$ or $B-E_\theta^{h,h+1}$ is diagonal for various $h\in \mbb Z$.
Let $\mbf U_{n,d;\mcal A}^\mak d$ denote the $\mcal A$-subalgebra of $\mbf S_{n,d,\mcal A}^\mak d$
generated by all elements $[\mak b]$ such that $\mak b=(B, \alpha)$
satisfies $B$ or $ B-RE_\theta^{h,h+1}$ is diagonal for various $h \in \mbb Z$
and $R \in \mbb N$.
\begin{definition}
The algebra $\mbf U_{n,d}^\mak d$ is called the Lusztig algebra (of affine type $\mrm D$).
\end{definition}
The definition of aperiodic
 singed matrix is same as the definition of type $\mrm A$ in \cite{Lu99}.
\begin{definition}
A signed matrix $\mak a=(A, \alpha)$ is called if $A$ is aperiodic.
\end{definition}
Denote by $\Xi_{n,d}^{ap}$ and $\Xi_\mak d^{ap}$ the sets of aperiodic matrices in $\Xi_{n,d}$ and aperiodic signed matrices in $\Xi_\mak d$, respecetively.
A product of standard basis elements $[\mak b_1]\ast \cdots \ast[\mak b_m]$ is called an aperiodic monomial
if for each $i \in [1,m]$, such that $B_i-RE_{\theta}^{j,j+1}$ is diagonal for some $R \in \mbb N$ and $j \in \mbb Z$.

The following aperiodic monomial is an analogue of $\mbf U_{n,d}$ (Lusztig algebra of affine type $\mrm A$) in \cite{Lu99}.
\begin{lemma}\label{chen121212}
For any $\mak a \in \Xi_\mak d^{ap}$,
there exists an aperiodic monomial $\zeta_\mak a=[\mak b_1]\ast \cdots \ast[\mak b_m]$ such that $[\mak b_1]\ast  \cdots \ast[\mak b_m]=[\mak a]+\ {\rm lower \ terms}$.
\end{lemma}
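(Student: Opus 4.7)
The plan is to proceed by induction on a complexity measure $\nu(\mak a)$ that captures the deviation of $A$ from being diagonal, for instance $\nu(\mak a) = \sum_{1\le i\le n,\ j\neq i} a_{ij}(j-i)^2$, adapting the strategy of Lusztig \cite{Lu99} for the affine type A aperiodic monomial basis. In the base case $A$ is diagonal, and $\mak a=(A,\alpha)$ itself is a length-one aperiodic monomial (a single diagonal standard generator), so there is nothing to prove.

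For the inductive step, I would isolate an off-diagonal entry of $A$ in the fundamental strip $1\le i\le n$ that lies at a "corner" of the mass distribution, and exploit the aperiodicity of $A$ to locate a row $h$ satisfying the precise vanishing hypothesis of one of the cases of Lemma \ref{CQY}. Concretely, aperiodicity means that no diagonal of $A$ is entirely nonzero in a period, which guarantees the existence of some $h$ with $a_{h,j}=0$ for all $j\ge k$ (or the symmetric lower statement), exactly the hypothesis needed to push the column-$k$ mass from row $h+1$ up to row $h$ via case (a), or the reflected move via case (b). The resulting modified matrix $A'$ is still aperiodic and satisfies $\nu((A',\alpha'))<\nu(\mak a)$; by induction $\zeta_{(A',\alpha')}$ exists, and for a suitable divided-power Chevalley generator $[\mak b]$ one has $[\mak b]\ast[(A',\alpha')]=[\mak a]+\text{lower terms}$ by Lemma \ref{CQY}. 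Since multiplication by $[\mak b]$ preserves the partial order $\preceq$ on leading terms, setting $\zeta_{\mak a}:=[\mak b]\ast\zeta_{(A',\alpha')}$ closes the induction.

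The main obstacle, absent in the affine type A analogue, is the proper deployment of the boundary generators $\mak n,\mak n'$ from cases $(c_1),(c_1'),(c_2),(c_2')$ of Lemma \ref{CQY}, which flip the sign $\alpha\mapsto-\alpha$ exactly when the divided-power exponent $R$ is odd. Whenever the off-diagonal mass we wish to move crosses one of the walls at rows $0/1$ or $r/r+1$, we are forced to use $[\mak n]$ or $[\mak n']$ in place of the usual interior generators, and the parity of $R$ must be tracked so that the final product carries the target sign $\alpha$. This is handled by refining the induction to keep separate counters for moves across each of the two walls, and where necessary combining an odd move across one wall with an odd move across the other, or splitting a single odd move into two; the leading-term formulas of Lemma \ref{CQY} then assemble correctly and yield the required monomial.
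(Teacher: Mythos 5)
Your proposal follows essentially the same route as the paper: induction on a weight measuring the off-diagonal mass of $A$ (the paper uses $\Psi(\mak a)=\sum_{i\in[1,n],\,i<j}|j-i|a_{ij}$, you use a squared variant), with aperiodicity of the outermost occupied superdiagonal used to locate a row where the vanishing hypotheses of Lemma \ref{CQY} hold, one divided-power generator peeled off per step, and the sign flips at the two walls absorbed into the bookkeeping. The only point you assert without argument---that the intermediate matrix remains aperiodic so the induction hypothesis applies---is precisely what the paper's choice of the maximal cutoff $u$ in the definition of $f_{k;s,t}$ is designed to guarantee, so your outline matches the paper's proof.
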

\begin{proof}
Given $\mak a=(A, \alpha) \in \Xi_\mak d$,
we set
\begin{align*}
f_{k;s,t}(\mak a)=(A-\sum_{s\leq j\leq t}a_{k-1,j}(E_\theta^{k-1,j}-E_\theta^{k,j}), \alpha')\in \Xi_\mak d,
\end{align*}
where $\alpha'=\alpha$ unless $k \equiv 1 \ {\rm mod}\ r$ and $\sum_{s\leq j\leq t}a_{k-1,j}$ is even.

Let $\Psi(\mak a)=\sum_{i\in [1,n], i<j}|j-i|a_{ij}$.
It is apparent that $\Psi({f_{k;s,t}(\mak a)})\leq \Psi(\mak a)$
for all $k,s$ and $t$ with $k\leq s\leq t$,
and the equality holds if and only if
\begin{align*}
a_{k-1,j}=0, \ \forall s\leq j\leq t.
\end{align*}
We prove it by induction on $\Psi(\mak a)$.
If $\Psi(\mak a)=0$, then $A$ is diagonal.

Assume that $\Psi(\mak a)>0$ and there exist aperiodic monomials for all $\mak a'$ such that $\Psi(\mak a')<\Psi(\mak a)$.
Set $m={\rm min}\{l \in \mbb N \mid a_{ij}=0 \ {\rm for \ all}\ j-i>l\}$.
Since $\mak a$ is aperiodic, there exists $a_{k,k+m}=0$ and $a_{k-1,k+m-1}\neq 0$ for some $k\in \mbb Z$.

Let $u={\rm max}\{s\leq k+m-1 \mid f_{k;s,k+m-1}(\mak a)\}$.
It is clear that $u\geq k$ and $a_{k,l}=0$ for all $l>u$.
Otherwise, there exists $j>u$ such that $a_{kj}\neq 0$.
Then $f_{k;j,k+m-1}(\mak a)$ is aperiodic, it is absurd.

We set $\mak b=(B,\alpha) \in \Xi_\mak d$ such that ${\rm ro}(f_{k;s,t}(\mak a))={\rm co}(\mak b)$ and $B-\sum_{u}^{k+m-1}a_{k-1,j}E_\theta^{k-1,k}$ is diagonal.
 By the results of Lemma \ref{CQY}, we have
\begin{align*}
[\mak b]\ast[f_{k;s,t}(\mak a)]=[\mak a]+{\rm lower \ terms}.
\end{align*}
 By induction on $\Psi(\mak a)$, we complete the proof.
\end{proof}

 We shall use the notation $\mcal L \overset{1}{\subset}\mcal L'$ to denote that $ \mcal L \subset \mcal L'$ and $|\mcal L'/\mcal L|=1$.
 For $1\leq i <r$ and $1\leq a \leq r$, we define the following functions, for any $L =(L_i)_{i \in \mbb Z}, L'=(L_i')_{i \in \mbb Z}\in \mcal X_{n,d}^\mak d$
\begin{align*}
\mbf E_i(L,L')&=\left\{\begin{array}{ll}
   v^{-|L_{i+1}'/L_{i}'|}, & {\rm if}\ L_i\overset{1}{\subset}L'_i, L_j=L_j',
   \forall j \in [0,r]\backslash\{i\};\\
   0, &{\rm oterwise }. \\
 \end{array}
 \right.\\
 \mbf F_i(L,L')&=\left\{\begin{array}{ll}
   v^{-|L_{i}'/L_{i-1}'|}, & {\rm if}\ L_i'\overset{1}{\subset}L_i, V_j=V'_j, \forall j \in [1,r]\backslash\{i\};\\
   0, &{\rm oterwise }. \\
 \end{array}
 \right.\\
 \mbf H_a^{\pm}(L,L')&=\left\{\begin{array}{ll}
   v^{\pm|L_{a}'/L_{a-1}'|}, & {\rm if}\ L=L';\\
   0, &{\rm oterwise }. \\
 \end{array}
 \right.\\
 \mbf T_0(L,L')&=\left\{\begin{array}{ll}
   v^{1-|L_{0}'/L_{-1}'|}, & {\rm if}\ L_0\cap L_0'\overset{1}{\subset}L_0, L_j=L_j', \forall j \in [1,r];\\
   0, &{\rm oterwise }. \\
 \end{array}
 \right.\\
 \mbf T_r(L,L')&=\left\{\begin{array}{ll}
   v^{1-|L_{r}'/L_{r-1}'|}, & {\rm if}\ L_r\cap L_r'\overset{1}{\subset}L_r, L_j=L_j', \forall j \in [0,r-1];\\
   0, &{\rm oterwise }. \\
 \end{array}
 \right.\\
 \mbf J_+(L,L')&=\left\{\begin{array}{ll}
   1, & {\rm if}\ L=L', \  {\rm and} \ L \in \mathcal{X}_{n,d}^{\mathfrak{d},1};\\
   0, &{\rm oterwise }. \\
 \end{array}
 \right.\\
 \mbf J_-(L,L')&=\left\{\begin{array}{ll}
   1, & {\rm if}\ L=L', \ {\rm and} \ L \in \mathcal{X}_{n,d}^{\mathfrak{d},2};\\
   0, &{\rm oterwise }. \\
 \end{array}
 \right.\\
 \mbf K_i&=\mbf H_{i+1}\mbf H_i^{-1}.
 \end{align*}

 \begin{proposition}
 The elements $\mbf E_i, \mbf F_i, \mbf K_i$ for $i\in [1,r-1]$ and $\mbf J_\pm, \mbf T_0, \mbf T_r$ satisfy the following relations in $\mbf U_{n,d}^\mak d$.
\begin{align*}
&\mbf J_\pm \mbf J_\pm =\mbf J_\pm,\ \  \mbf J_\pm \mbf J_\mp=0, \\
&\mbf T_0\mbf T_r=\mbf T_r\mbf T_0, \ \ \mbf K_i\mbf K_j=\mbf K_j\mbf K_i,\\
&\mbf J_\pm\mbf E_i=\mbf E_i\mbf J_\pm, \ \ \mbf J_\pm\mbf F_i=\mbf F_i\mbf J_\pm, \\
&\mbf J_\pm\mbf T_0=\mbf T_0\mbf J_\mp, \ \ \mbf J_\pm\mbf T_r=\mbf T_r\mbf J_\mp, \\
&\mbf K_i\mbf E_j=v^{2\delta_{ij}-\delta_{i,j+1}-\delta_{i+1,j}}\mbf E_j\mbf K_i, \\
&\mbf K_i\mbf F_j=v^{-2\delta_{ij}+\delta_{i,j+1}+\delta_{i+1,j}}\mbf F_j\mbf K_i, \\
&\mbf K_i\mbf T_0=\mbf T_0\mbf K_i, \ \ \mbf T_r\mbf K_i=\mbf K_i\mbf T_r, \\
&\mbf E_i\mbf F_j-\mbf F_j\mbf E_i=\delta_{ij}\frac{\mbf K_i-\mbf K_i^{-1}}{v-v^{-1}},\\
&\mbf E_i\mbf E_j=\mbf E_j\mbf E_i, \ \ \mbf F_i\mbf F_j=\mbf F_j\mbf F_i,\ \ {\rm if} \ |i-j|>1,\\
&\mbf E_i^2\mbf E_j+\mbf E_j\mbf E_i^2=(v+v^{-1})\mbf E_i\mbf E_j\mbf E_i, \ \ {\rm if} \ |i-j|=1, \\
&\mbf F_i^2\mbf F_j+\mbf F_j\mbf F_i^2=(v+v^{-1})\mbf F_i
\mbf F_j\mbf F_i,\ \ {\rm if} \ |i-j|=1, \\
&\mbf T_0\mbf E_i=\mbf E_i\mbf T_0, \ \ \mbf T_0\mbf F_i=\mbf F_i\mbf T_0, \ \ {\rm if} \ i\in [2,r-1] \\
&\mbf T_r\mbf E_i=\mbf E_i\mbf T_r, \ \ \mbf T_r\mbf F_i=\mbf F_i\mbf T_r, \ \ {\rm if} \ i \in [1,r-2] \\
&\mbf E_1\mbf T_0^2+\mbf T_0^2\mbf E_1=(v+v^{-1})\mbf T_0 \mbf E_1\mbf T_0+\mbf E_1, \\
&\mbf T_0\mbf E_1^2+\mbf E_1^2\mbf T_0=(v+v^{-1})\mbf E_1 \mbf T_0\mbf E_1, \\
&\mbf F_1\mbf T_0^2+\mbf T_0^2\mbf F_1=(v+v^{-1})\mbf T_0
\mbf F_1\mbf T_0+\mbf F_1, \\
&\mbf T_0\mbf F_1^2+\mbf F_1^2\mbf T_0=(v+v^{-1})\mbf F_1 \mbf T_0\mbf F_1, \\
&\mbf E_{r-1}\mbf T_r^2+\mbf T_r^2\mbf E_{r-1}=(v+v^{-1})\mbf T_r
\mbf E_{r-1}\mbf T_r+\mbf E_{r-1},\\
&\mbf T_r\mbf E_{r-1}^2+\mbf E_{r-1}^2\mbf T_r=(v+v^{-1})
\mbf E_{r-1}\mbf T_r\mbf E_{r-1},  \\
&\mbf F_{r-1}\mbf T_r^2+\mbf T_r^2\mbf F_{r-1}=(v+v^{-1})\mbf T_r
\mbf F_{r-1}\mbf T_r+\mbf F_{r-1}, \\
&\mbf T_r\mbf F_{r-1}^2+\mbf F_{r-1}^2\mbf T_r=(v+v^{-1})
\mbf F_{r-1}\mbf T_r\mbf F_{r-1}.  \\
\end{align*}
\end{proposition}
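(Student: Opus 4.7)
The plan is to prove every listed identity as an equality of convolution kernels in $\mbf S_{n,d}^\mak d$, and for each equality reduce to a term-by-term comparison of coefficients of standard basis elements $[\mak a]$. First I would rewrite the generators intrinsically in terms of the orbit parametrization from Proposition~\ref{isomorphism}. Unpacking the lattice conditions shows that for $i\in[1,r-1]$, $\mbf E_i$ and $\mbf F_i$ coincide with $[\mak b]$, $[\mak c]$ where $B-E_\theta^{i,i+1}$, $C-E_\theta^{i+1,i}$ are diagonal and the sign is preserved, while $\mbf K_i^{\pm 1}$, $\mbf H_a^{\pm 1}$ are simple scalar multiples of diagonal standard elements. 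The generator $\mbf T_0$ (resp.\ $\mbf T_r$) is the element $[\mak n]$ (resp.\ $[\mak n']$) of Lemma~\ref{formulas}(c) (resp.\ (c$'$)) with $D-E_\theta^{0,1}$ (resp.\ $D'-E_\theta^{r,r+1}$) diagonal and sign flipped. Finally $\mbf J_+,\mbf J_-$ are the identity kernels restricted to $\mcal X_{n,d}^{\mak d,1}$ and $\mcal X_{n,d}^{\mak d,2}$.

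Next I would dispatch the routine relations. The first three lines (idempotents, commutations $\mbf J_\pm\mbf E_i=\mbf E_i\mbf J_\pm$, $\mbf J_\pm\mbf F_i=\mbf F_i\mbf J_\pm$, and parity-swap $\mbf J_\pm\mbf T_0=\mbf T_0\mbf J_\mp$, $\mbf J_\pm\mbf T_r=\mbf T_r\mbf J_\mp$) are forced by the decomposition $\mcal X_{n,d}^\mak d=\mcal X_{n,d}^{\mak d,1}\sqcup\mcal X_{n,d}^{\mak d,2}$ together with the observation that $\mbf E_i,\mbf F_i,\mbf K_i$ preserve the components while cases (c), (c$'$) of Lemma~\ref{formulas} send $\alpha$ to $-\alpha$. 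The commutativity $\mbf T_0\mbf T_r=\mbf T_r\mbf T_0$ follows because the two operators modify disjoint portions $L_0,L_{-1}$ versus $L_r,L_{r-1}$ of the lattice chain (and their images under the involution $L_i^\ast=L_{n-i}$ are also disjoint, since $n=2r$). The block of relations purely among $\mbf E_i,\mbf F_i,\mbf K_i$ for $i\in[1,r-1]$ (including the length-two Serre relations and $[\mbf E_i,\mbf F_j]=\delta_{ij}(\mbf K_i-\mbf K_i^{-1})/(v-v^{-1})$) reduces to cases (a)(b) of Lemma~\ref{formulas} and is formally identical to the affine type $\mrm A$ computation of \cite{Lu99}.

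What remains are the genuinely new relations involving the boundary generators. For the weight commutations $\mbf K_i\mbf T_0=\mbf T_0\mbf K_i$, $\mbf K_i\mbf T_r=\mbf T_r\mbf K_i$ and the "far" commutations $\mbf T_0\mbf E_i=\mbf E_i\mbf T_0,\ \mbf T_0\mbf F_i=\mbf F_i\mbf T_0$ for $i\in[2,r-1]$ (and similarly for $\mbf T_r$ with $i\in[1,r-2]$), I would expand each product using Proposition~\ref{divided formula}(a)(b)(c)(c$'$) and observe that the ${\rm ro}/{\rm co}$ vectors match and the surviving summands coincide because the support of $E_\theta^{0,1}$ is too far from $E_\theta^{i,i+1}$ (and analogously at $r$) to produce any cross terms; this is a short bookkeeping check. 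For the four $\imath$-Serre blocks
\begin{align*}
&\mbf E_1\mbf T_0^2+\mbf T_0^2\mbf E_1=(v+v^{-1})\mbf T_0\mbf E_1\mbf T_0+\mbf E_1,\\
&\mbf T_0\mbf E_1^2+\mbf E_1^2\mbf T_0=(v+v^{-1})\mbf E_1\mbf T_0\mbf E_1,
\end{align*}
and their $\mbf F_1/\mbf T_r/\mbf E_{r-1}/\mbf F_{r-1}$ analogues, I would apply Proposition~\ref{divided formula} iteratively to expand both sides in the standard basis, organize the double sums by the target matrix $A+\sum t_p(E_\theta^{0,p}-E_\theta^{1,p})$ with $\sum|t_p|\le 2$, and compare coefficients summand by summand, using the $-\alpha$ sign flip of case (c) to track parity.

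The main obstacle will be the inhomogeneous relations with the additional $+\mbf E_1$ (resp.\ $+\mbf F_1$, $+\mbf E_{r-1}$, $+\mbf F_{r-1}$) correction on the right. This extra term is precisely the signature of a quantum symmetric pair, and it arises because formula~(c) of Proposition~\ref{divided formula} splits into two sums with a unit shift in the exponent of $v$ at the threshold between $p\le 0$ and $p>0$ (and analogously at $r$ for (c$'$)). Tracking how this one-unit shift propagates through the three products $\mbf T_0\mbf E_1\mbf T_0$, $\mbf T_0^2\mbf E_1$, and $\mbf E_1\mbf T_0^2$, and isolating the residual terms on the target matrix that equal $E_\theta^{0,1}$ (i.e.\ the Chevalley element $\mbf E_1$) rather than something deeper in the weight lattice, will demand delicate sign and parity accounting around the index $0$. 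Once the four boundary identities are established, the proposition is complete.
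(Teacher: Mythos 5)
Your plan is sound and would yield a complete proof, but it is not the route the paper takes: the paper's entire proof is the remark that the verification ``is essentially reduced to the finite type computations'' of \cite[Proposition 6.2.1]{FL14}, and no affine computation is performed. The idea behind that reduction is that each listed relation involves generators attached to at most two adjacent nodes of the Dynkin diagram, so the only lattice components that actually change, together with the finitely many quotients $L_j/L_{j-1}$ entering the normalizations, form a configuration identical to the one in the finite type $\mrm D$ setting of \cite{FL14}; the finite-type coefficient computations then transfer verbatim. Your approach instead redoes everything inside the affine Schur algebra using Lemma \ref{formulas} and Proposition \ref{divided formula}, organizing each identity as a coefficient comparison in the standard basis. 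That is more laborious --- especially the four $\imath$-Serre blocks, where you correctly locate the source of the inhomogeneous $+\mbf E_1$ (resp.\ $+\mbf F_1$, $+\mbf E_{r-1}$, $+\mbf F_{r-1}$) term in the unit shift of the $v$-exponent between the $p\le 0$ and $p>0$ sums of case $(c)$, and where the sign flip $\alpha\mapsto-\alpha$ must be tracked through $\mbf T_0^2$ --- but it has the virtue of being self-contained and of not requiring one to argue that the affine multiplication formulas restrict to the finite-type ones. Two small imprecisions worth fixing: $\mbf E_i$, $\mbf F_i$, $\mbf T_0$, $\mbf T_r$ are not single standard basis elements but sums $\sum[\mak b]$ over all compatible diagonal parts (one for each weight), and $\mbf H_a^{\pm1}$ is a linear combination of diagonal idempotents with weight-dependent coefficients $v^{\pm\lambda_a}$ rather than a scalar multiple of a single one; neither affects the strategy, but the coefficient checks must be done weight by weight.
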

\begin{proof}
The verification of the relations is essentially reduced to the finite
type computations, which is given in \cite[Proposition 6.2.1]{FL14}, and hence we omit it.
\end{proof}
The following lemma is an analogue of \cite[Corollary 4.6.6]{FL14}, which follows by a standard Vandermonde determinant type argument.
\begin{lemma}
The algebra $\mbf U_{n,d}^\mak d$ is generated by $\mbf E_i, \mbf F_i, \mbf K_i$ for $1\leq i <r$ and $\mbf J_\pm, \mbf T_0, \mbf T_r$.
\end{lemma}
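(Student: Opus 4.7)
The plan is to show that each generator $[\mak b]$ of $\mbf U_{n,d}^\mak d$ prescribed in its definition lies in the subalgebra generated by $\mbf E_i,\mbf F_i,\mbf K_i$ ($1\le i<r$) together with $\mbf J_\pm,\mbf T_0,\mbf T_r$. These generators split into two families: (a) the diagonal generators, $[\mak b]$ with $B$ diagonal, and (b) the near-diagonal generators, $[\mak b]$ with $B-E_\theta^{h,h+1}$ diagonal for some $h\in\mbb Z$. Since $E_\theta^{h,h+1}=E^{h,h+1}+E^{1-h,-h}$ is invariant under $h\mapsto h+n$ and the $\theta$-symmetry identifies $h$ with $-h$ modulo $n$, only $h\in[0,r]$ produce genuinely distinct matrices: $h\in[1,r-1]$ will be handled by $\mbf E_h$, the endpoints $h\in\{0,r\}$ by $\mbf T_0,\mbf T_r$, and the reflected lower cases by $\mbf F_h$.

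For family (a), a diagonal matrix $B$ is encoded by a composition $\lambda\in\Lambda_{n,d}^\mak d$. Using the defining symmetries $\lambda_i=\lambda_{1-i}=\lambda_{i+n}$ together with $\sum_{1\le i\le n}\lambda_i=2d$, such $\lambda$ reduces to an $r$-tuple $(\lambda_1,\ldots,\lambda_r)\in\mbb N^r$ with $\lambda_1+\cdots+\lambda_r=d$; together with the sign $\alpha$ this indexes a finite set of diagonal orbits. On the $(\lambda,\alpha)$-orbit, $\mbf K_i$ acts by the scalar $v^{\lambda_{i+1}-\lambda_i}$ for $1\le i<r$, and $\mbf J_\pm$ restricts to the identity on exactly one sign component and to zero on the other. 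The $r-1$ differences $\lambda_{i+1}-\lambda_i$ together with the fixed sum $\lambda_1+\cdots+\lambda_r=d$ determine $(\lambda_1,\ldots,\lambda_r)$ uniquely, so the joint spectrum of $\{\mbf K_i,\mbf J_\pm\}$ separates all diagonal orbits. A standard Vandermonde determinant argument, exactly as in \cite[Corollary~4.6.6]{FL14}, then expresses every $[\mak b]$ with $B$ diagonal as a polynomial in the $\mbf K_i$'s and $\mbf J_\pm$'s.

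For family (b), the geometric descriptions of $\mbf E_i,\mbf F_i,\mbf T_0,\mbf T_r$ as characteristic functions of codimension-one moves show that, up to a power of $v$, the element $[\mak b]$ equals the product of the relevant Chevalley generator (one of $\mbf E_h,\mbf F_h,\mbf T_0,\mbf T_r$) with the diagonal element supported on the orbit whose row/column sums are ${\rm ro}(\mak b)$ and ${\rm co}(\mak b)$. Such diagonal elements already lie in the subalgebra by family (a) and the Chevalley generator itself is in the list, so the product $[\mak b]$ also belongs to the subalgebra. This completes the reduction.

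The main obstacle is the bookkeeping at the two affine vertices $h\in\{0,r\}$. There the off-diagonal move $E_\theta^{0,1}$ (resp.\ $E_\theta^{r,r+1}$) flips the sign component $\alpha\mapsto-\alpha$, so the diagonal factor multiplying $\mbf T_0$ or $\mbf T_r$ must sit in the opposite $\mbf J_\mp$-component; this has to remain consistent with the relation $\mbf J_\pm\mbf T_0=\mbf T_0\mbf J_\mp$ from the preceding proposition. Once the various sign components are separated cleanly using $\mbf J_\pm$, the remaining verification reduces to the finite type $\mrm D$ computation already carried out in \cite[\S 6]{FL14}, which is the source the authors invoke to omit the proof.
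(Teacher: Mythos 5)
Your proposal is correct and follows essentially the same route the paper takes: the paper's entire proof is the citation of the ``standard Vandermonde determinant type argument'' of \cite[Corollary 4.6.6]{FL14}, and your write-up is exactly that argument made explicit — recover the diagonal idempotents from the joint spectrum of the $\mbf K_i$ together with $\mbf J_\pm$ by interpolation, then isolate each near-diagonal generator $[\mak b]$ by sandwiching the appropriate Chevalley element between diagonal idempotents. The bookkeeping you flag at the affine nodes (the sign flip $\alpha\mapsto-\alpha$ for $\mbf T_0,\mbf T_r$ and the identification of the $h\in[r+1,n-1]$ cases with the $\mbf F_h$) is handled correctly.
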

\subsection{A raw comultiplication}
For any $A  \in \Xi_{n,d}$, we set
\begin{equation}
[A^\pm]=[(A, +)]+[(A,-)] \in \mbf S_{n,d}^\mak d.
\end{equation}
Denote by $\wit{\mbf S}_{n,d}^\mak d$ the subalgebra of $\mbf S_{n,d}^\mak d$ spanned by $[A^{\pm}]$ for all $A \in \Xi_{n,d}$.
Then $\mbf S_{n,d}^\mak d$ admits the following decomposition
\begin{align*}
\mbf S_{n,d}^\mak d =\mbf J_+\wit{\mbf S}_{n,d}^\mak d\oplus \mbf J_-\wit{\mbf S}_{n,d}^\mak d.
\end{align*}

For any $f \in \wit{\mbf S}_{n,d}^\mak d$ and $(L,L') \in \mcal X_{n,d}^\mak d\times \mcal X_{n,d}^\mak d$, we have $f(L,L')=f(g L,g L'), \forall g\in {\rm O}_F(V)$.
Moreover, the set $\{\{A^\pm \}_d\mid A \in \Xi_{n,d}\}$ forms a basis of $\wit{\mbf S}_{n,d}^\mak d$ (called the canonical basis), where $\{A^\pm \}_d=\{(A, +)\}_d+\{(A, +)\}_d$.

Let $V''$ be an isotropic $F$-subspace of dimension $d''$.
Then $V'=V''^\bot/V''$ is a vector space of dimension $2d'$ with its symmetric form induced from $V$ where $d'=d-d''$.

Recall $\mcal X_{n,d}$ and $\mbf S_{n,d}$ of affine type $\mrm A$ in \cite{FLLLW20, Lu99}.
The Schur algebra $\mbf S_{n,d}$ admits a standard basis $\{[A]^{\mbf a} \mid A \in \Theta_{n,d}\}$ and a canonical basis $\left\{\{A\}_d^{\mbf a}\mid  A \in \Theta_{n,d}\right\}$,
where
\begin{equation}
\Theta_{n,d}=\bigg\{A=(a_{ij})_{i,j \in \mbb Z} \in {\rm Mat}_{\mbb Z\times \mbb Z}(\mbb N) \ \Big| \ a_{ij}=a_{i+n,j+n}, \ \forall i,j \in \mbb Z, \ \sum_{1\leq i\leq n}a_{ij}=d\bigg\}.
\end{equation}
Moreover, the Lusztig subalgebra $\mbf U_{n,d}$ of $\mbf S_{n,d}$ is generated by the Chevalley generators $\mbf e_i, \mbf f_i$ and $\mbf h_i$ such that $\mbf e_{i+n}=\mbf e_i, \mbf f_{i+n}=\mbf f_i$ and $\mbf h_{i+n}=\mbf h_i$.

Given a periodic chain $L$ in $\mcal X_{n,d}^\mak d$,
we can define a periodic chain $L''=\pi''(L) \in \mcal X_{n,d''}$ (of affine type $\mrm A$) by setting $L_i''=L_i\cap V''$.
We also define a periodic chain $L'=\pi^{\natural}(L) \in \mcal{X}_{n,d'}^\mak d$ by setting $L_i'=(L_i\cap V''^\bot+V'')/V''$ for all $i$.
Given any pair $(L',L'')\in \mcal X_{n,d'}^\mak d \times \mcal X_{n,d''}$, we set
\begin{equation*}
\mcal Z_{L',L''}^\mak d=\{L\in \mcal X_{n,d}^\mak d \mid \pi^\natural(L)=L',\pi''(L)=L''\}.
\end{equation*}
The same argument as in \cite{FLLLW20} shows that there is a well-defined map
\begin{align}\label{def}
\wit{\Delta}^\mak d: \wit{\mbf S}_{n,d}^\mak d\rightarrow\wit{\mbf S}_{n,d'}^\mak d\otimes \mbf S_{n,d''}, \ \forall d'+d''=d,
\end{align}
given by
\begin{align}\label{defi}
\wit{\Delta}^\mak d(f)(L',\wit L',L'',\wit L'')=\sum_{\wit L \in \mcal Z_{\wit L',\wit L''}^\mak d}f(L,\wit L), \ \ \ \forall L',\wit L'\in \mcal X_{n,d'}^\mak d,L'',\wit L''\in \mcal X_{n,d''},
\end{align}
where $L$ is a fixed element in $ \mcal Z_{L',L''}^\mak d$.

Following the argument of \cite[Proposition 1.5]{Lu00}, which is formal and not reproduced here, we have the following proposition.
\begin{proposition}
The map $\wit{\Delta}^\mak d$ is an algebra homomorphism.
\end{proposition}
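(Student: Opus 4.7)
The plan is to verify $\wit{\Delta}^\mak d(f_1 \ast f_2) = \wit{\Delta}^\mak d(f_1) \ast \wit{\Delta}^\mak d(f_2)$ for $f_1, f_2 \in \wit{\mbf S}_{n,d}^\mak d$ by directly unraveling both sides. First I would fix a quadruple $(L', \wit L', L'', \wit L'')$ and a representative $L \in \mcal Z_{L', L''}^\mak d$. Using \eqref{defi} and the definition of convolution, the left side expands to
\[
\wit{\Delta}^\mak d(f_1 \ast f_2)(L', \wit L', L'', \wit L'') = \sum_{\wit L \in \mcal Z_{\wit L', \wit L''}^\mak d} \sum_{M \in \mcal X_{n,d}^\mak d} f_1(L, M)\, f_2(M, \wit L).
\]
On the right side, the convolution in $\wit{\mbf S}_{n,d'}^\mak d \otimes \mbf S_{n,d''}$ introduces an inner sum over pairs $(M', M'') \in \mcal X_{n,d'}^\mak d \times \mcal X_{n,d''}$, and each factor is itself a fiber sum against a chosen representative in $\mcal Z_{M', M''}^\mak d$.

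The two expressions match after partitioning the $M$-sum on the left by the projections $(\pi^\natural(M), \pi''(M)) = (M', M'')$, provided one can move the choice of representative freely inside the fiber. More precisely, the identity reduces to the claim that $f_2(M_1, \wit L) = f_2(M_2, \wit L)$ for any two elements $M_1, M_2 \in \mcal Z_{M',M''}^\mak d$. To establish this I would proceed in two steps. Step (i): a Witt-type extension lemma showing $\mrm O_F(V)$ acts transitively on $\mcal Z_{M',M''}^\mak d$, by building a global isometry of $V$ out of the induced identifications $M_1 \cap V''^\bot/V'' \cong M' \cong M_2 \cap V''^\bot/V''$ and $M_1 \cap V'' \cong M'' \cong M_2 \cap V''$ and extending compatibly across the periodic chain. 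Step (ii): every $f \in \wit{\mbf S}_{n,d}^\mak d$ is $\mrm O_F(V)$-invariant, not merely $\mrm{SO}_F(V)$-invariant, which is immediate from the definition $[A^\pm] = [(A,+)] + [(A,-)]$ together with the bijections \eqref{sign} realized by any element of $\mrm O_F(V) \setminus \mrm{SO}_F(V)$. Applying $g \in \mrm O_F(V)$ with $g \cdot M_1 = M_2$ and using invariance of $f_2$ then gives the required equality, and a symmetric verification handles the well-definedness of $\wit{\Delta}^\mak d$ itself (independence of the choice of $L$).

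The main obstacle is step (i): a periodic-lattice refinement of Witt's theorem for the split orthogonal group over $F = \mbb F((\var))$. One has to simultaneously lift the isometries prescribed on the isotropic sublattice chain (giving $L'' \in \mcal X_{n,d''}$) and on the quotient chain (giving $L' \in \mcal X_{n,d'}^\mak d$) to a single isometry preserving the ambient periodic chain. The sign decomposition $\mcal X_{n,d}^\mak d = \mcal X_{n,d}^{\mak d,1} \sqcup \mcal X_{n,d}^{\mak d,2}$ is the precise obstruction that forces us to work with $\wit{\mbf S}_{n,d}^\mak d$ rather than all of $\mbf S_{n,d}^\mak d$: any isometry lifting the identity on $(M', M'')$ may land in the opposite component, and only the symmetrized functions $[A^\pm]$ are insensitive to this swap. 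Once (i) is secured, the remainder is bookkeeping formally parallel to \cite[Proposition 1.5]{Lu00} and the type $\mrm C$ analogue in \cite[\S 5]{FLLLW20}, which justifies the statement in the excerpt that the argument is ``formal and not reproduced here.''
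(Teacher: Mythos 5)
Your overall architecture (expand both sides, partition the $M$-sum by the fibers of $\pi=(\pi^\natural,\pi'')$, and reduce to independence of the choice of representative) is the right one, and your step (ii) — that elements of $\wit{\mbf S}_{n,d}^\mak d$ are ${\rm O}_F(V)$-invariant and that this is why one works with $\wit{\mbf S}_{n,d}^\mak d$ rather than all of $\mbf S_{n,d}^\mak d$ — is correct. But the intermediate claim you reduce to is both false and not implied by your argument. The pointwise identity $f_2(M_1,\wit L)=f_2(M_2,\wit L)$ for all $\wit L$ and all $M_1,M_2\in\mcal Z^\mak d_{M',M''}$ cannot hold: take $f_2=[A^\pm]$ with $A$ non-diagonal; replacing $M_1$ by $M_2$ changes which $\wit L$ lie in relative position $A$, so the two functions of $\wit L$ have different supports. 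Nor does your derivation produce it: invariance applied to $g$ with $gM_1=M_2$ gives $f_2(M_2,g\wit L)=f_2(M_1,\wit L)$, not $f_2(M_2,\wit L)=f_2(M_1,\wit L)$. What the matching of the two sides actually requires is only the equality of fiber sums
\[
\sum_{\wit L\in\mcal Z^\mak d_{\wit L',\wit L''}}f_2(M_1,\wit L)\;=\;\sum_{\wit L\in\mcal Z^\mak d_{\wit L',\wit L''}}f_2(M_2,\wit L),
\]
and to deduce this from invariance you must re-index the sum by $\wit L\mapsto g\wit L$, i.e.\ $g$ must permute the fiber $\mcal Z^\mak d_{\wit L',\wit L''}$. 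An arbitrary $g\in{\rm O}_F(V)$ with $gM_1=M_2$ sends $\mcal Z^\mak d_{\wit L',\wit L''}$ to $\mcal Z^\mak d_{g_*\wit L',\,g|_{V''}\wit L''}$, so the transitivity of ${\rm O}_F(V)$ on the fiber claimed in your step (i) is not sufficient.

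The group-theoretic input that is actually needed — and which is the real content of the argument of \cite[Proposition 1.5]{Lu00} that the paper invokes — is that the subgroup $U$ of isometries stabilizing $V''$ and inducing the identity on both $V''$ and $V''^\bot/V''$ (the unipotent radical of the parabolic ${\rm Stab}(V'')$) acts transitively on each fiber $\mcal Z^\mak d_{M',M''}$. An element of $U$ automatically preserves every fiber $\mcal Z^\mak d_{\wit L',\wit L''}$, so the re-indexing goes through, giving both the equality of fiber sums above and the well-definedness of $\wit\Delta^\mak d$ itself. Your Witt-type construction aims at an isometry inducing prescribed identifications on the lattice chains $M'$ and $M''$, but acting as the identity on those lattices is strictly weaker than acting as the identity on the spaces $V''$ and $V''^\bot/V''$; the transitivity lemma must be stated and proved for $U$. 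Once that orthogonal, periodic-lattice analogue of \cite[1.5]{Lu00} is in place (cf.\ the type $\mrm C$ treatment in \cite[\S 5]{FLLLW20}), the remaining bookkeeping in your proposal is fine.
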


Now we determine how the map $\wit{\Delta}^\mak d$ acts on the generators.
\begin{proposition}\label{1000}
For any $i \in [1,r-1]$, we have
\begin{align*}
\wit{\Delta}^\mak d(\mbf E_i) & =\mbf E_i'\otimes \mbf h_{i+1}''\mbf h_{n-i}''^{-1}+\mbf H_{i+1}'^{-1}\otimes \mbf e_{i}''\mbf h_{n-i}''^{-1}+\mbf H_{i+1}'\otimes \mbf f_{n-i}''\mbf h_{i+1}'', \\
\wit{\Delta}^\mak d(\mbf F_i) & =\mbf F_i'\otimes \mbf h_i''^{-1}\mbf h_{n+1-i}''+\mbf H_i'\otimes \mbf f_i''\mbf h_{n+1-i}''+\mbf H_i'^{-1}\otimes \mbf e_{n-i}''\mbf h_i''^{-1},\\
\wit{\Delta}^\mak d(\mbf T_r) & =\mbf T_r'\otimes \mbf k_r''+v\mbf H_{r+1}'^{-1}\otimes \mbf e_r''\mbf h_r''^{-1}+v^{-1}\mbf H_{r+1}'\otimes \mbf f_r''\mbf h_{r+1}'',\\
\wit{\Delta}^\mak d(\mbf T_0) & =\mbf T_0'\otimes \mbf k_0''+v\mbf H_1'^{-1}\otimes \mbf e_0''\mbf h_0''^{-1}+v^{-1}\mbf H_1'\otimes \mbf f_0''\mbf h_1'',\\
\wit{\Delta}^\mak d(\mbf K_i) & =\mbf K_i'\otimes \mbf k_i''\mbf k_{n-i}''^{-1}.
\end{align*}
Here the subperscripts $'$ and $''$ indicate that the underlying Chevalley generators lie in $\wit{\mbf S}_{n,d'}^\mak d$ and $\mbf S_{n,d''}$, respectively.
\end{proposition}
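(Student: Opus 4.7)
The plan is to verify each formula by direct evaluation of the definition \eqref{defi}. Fix a pair $(L',\wit L', L'', \wit L'') \in \mcal X_{n,d'}^\mak d \times \mcal X_{n,d'}^\mak d \times \mcal X_{n,d''} \times \mcal X_{n,d''}$ together with a chosen $L \in \mcal Z_{L', L''}^\mak d$, and then for a generator $f$ compute
\[
\wit{\Delta}^\mak d(f)(L',\wit L',L'',\wit L'') \;=\; \sum_{\wit L \in \mcal Z_{\wit L', \wit L''}^\mak d} f(L,\wit L).
\]
Each Chevalley generator is supported on pairs $(L,\wit L)$ that differ by a $1$-dimensional elementary modification at a single index $i$, subject to the self-duality $L_j^\ast = L_{n-j}$. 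The analysis of the sum reduces to classifying where the modification sits with respect to the filtration $V'' \subset V''^{\bot} \subset V$, with $V' = V''^\bot/V''$.

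I would first do $\mbf E_i$ for $i \in [1,r-1]$ in detail. The condition $L_i \overset{1}{\subset} \wit L_i$, together with $\pi^\natural(\wit L) = \wit L'$ and $\pi''(\wit L) = \wit L''$, admits exactly three mutually exclusive families of lifts $\wit L$: (i) the added line lies genuinely in $V'$ at index $i$, producing the contribution of $\mbf E_i'$ evaluated at $(L',\wit L')$ multiplied by a weight $\mbf h_{i+1}'' \mbf h_{n-i}''^{-1}$ coming from $|\wit L_{i+1}''/\wit L_i''|$ and the dual index $n-i$; (ii) the added line lies in $V''$ at index $i$, enlarging $L_i''$ to $\wit L_i''$ and forcing $(L',\wit L')$ to be equal up to the diagonal factor $\mbf H_{i+1}'^{-1}$; (iii) by the duality $L_j^\ast = L_{n-j}$, the added line can equivalently correspond to a simultaneous shrinking in $V''$ at index $n-i$, which by the explicit description of $\mbf f_{n-i}''$ produces $\mbf H_{i+1}' \otimes \mbf f_{n-i}'' \mbf h_{i+1}''$. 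Matching the $v$-shifts, which enter both from the normalizations in the definitions of $\mbf E_i$, $\mbf h_j''$, and the dimension counts of the quotients $L_j''/L_{j-1}''$ and $L_j'/L_{j-1}'$, yields precisely the stated formula.

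The formula for $\mbf F_i$ is obtained by the same argument after exchanging $L \leftrightarrow \wit L$. The cases of $\mbf T_0$ and $\mbf T_r$ are essentially the same three-case analysis applied at the special type $\mrm D$ nodes $0$ and $r$, with one extra feature: the modification $L_0 \cap \wit L_0 \overset{1}{\subset} L_0$ (resp.\ at $r$) toggles the parity component $\mcal X_{n,d}^{\mak d,1} \leftrightarrow \mcal X_{n,d}^{\mak d,2}$, which is consistent with the fact that on the type $\mrm A$ side the contributions come from the affine node $0$ (resp.\ node $r$), yielding $\mbf e_0''$, $\mbf f_0''$, $\mbf k_0''$ (resp.\ $\mbf e_r''$, $\mbf f_r''$, $\mbf k_r''$); the extra factors $v^{\pm 1}$ arise from the Hecke normalization at these special nodes. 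Finally, $\mbf K_i$ is diagonal ($L = \wit L$), so the sum collapses to a single term and the weight $|L_i/L_{i-1}|$ splits multiplicatively via $|L_i/L_{i-1}| = |L_i'/L_{i-1}'| + |L_i''/L_{i-1}''| + |L_{n+1-i}''/L_{n-i}''|$ using the duality; this reproduces $\mbf K_i' \otimes \mbf k_i'' \mbf k_{n-i}''^{-1}$.

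The main obstacle I anticipate is the careful tracking of the $v$-powers, which accumulate contributions from four sources: the normalization $[\mak a] = v^{-d(\mak a)} e_\mak a$; the definition of each generator as a twisted characteristic function (the exponents $|L_{i+1}'/L_i'|$, $|L_i'/L_{i-1}'|$, $1 - |L_0'/L_{-1}'|$, $1 - |L_r'/L_{r-1}'|$); the implicit weight shifts in $\mbf h_j''$, $\mbf H_j'$; and the duality identities $L_j^\ast = L_{n-j}$ that relate contributions at index $i$ and index $n-i$. The skeleton of the computation parallels that of \cite[Proposition 1.5]{Lu00} in the affine type $\mrm A$ case and the analogous comultiplication formulas in \cite{FLLLW20} for affine type $\mrm C$, so the argument is largely a matter of patient bookkeeping at the two boundary nodes $0$ and $r$, where the behavior differs from the interior nodes.
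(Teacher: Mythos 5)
Your proposal is correct and follows essentially the same route as the paper: direct evaluation of the definition \eqref{defi}, with the sum over lifts $\wit L$ split into the three mutually exclusive cases according to whether the elementary modification is visible in $V'=V''^{\bot}/V''$, enlarges the chain in $V''$, or (via the duality $L_j^{\ast}=L_{n-j}$) shrinks it in $V''$, followed by the counting of lifts and bookkeeping of $v$-powers in each case. The only difference is one of emphasis: the paper cites \cite[Lemma 3.2.1]{FL15} for $\mbf E_i,\mbf F_i,\mbf K_i$ and carries out the three-case count in detail for $\mbf T_0$ (where the parity toggle between $\mcal X_{n,d}^{\mak d,1}$ and $\mcal X_{n,d}^{\mak d,2}$ occurs), whereas you propose to detail $\mbf E_i$ and treat the boundary nodes as analogous.
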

\begin{proof}
The cases for $\mbf E_i,\mbf F_i$ and $\mbf K_i$ follow from a similar argument to \cite[Lemma 3.2.1]{FL15}.
For any $L \in \mcal X_{n,d}^\mak d$, we have
\begin{align*}
|L_{i+1}/L_i|=|L_{i+1}'/L_i'|+|L_{i+1}''/L_i''|+|L_{n-i}''/L_{n-1-i}''|.
\end{align*}
By definition, we have
\begin{align*}
\wit{\Delta}^\mak d(\mbf T_0)(L',\check{L}',L'',\check{L}'')
=v^{1-|\check{L}_{1}/\check{L}_0|}\sharp S,
\end{align*}
where $S=\{\check{L}\in \mcal Z_{\check{L}',\check{L}''}^\mak d\mid L_0\cap\check{L}_0\overset{1}{\subset}L_0, L_j=\check{L}_j, \ \forall 1\leq j\leq r\}$.
The set $S$ is nonempty only when the quadruple $(L',\check{L}',L'',\check{L}'')$ is in one of the following three cases.
\begin{align*}
  &({\rm i}) \   L_0'\cap \check{L}_0'\overset{1}{\subset}L_0', \ L_j'=\check{L}_j'  \  {\rm for }  \ 1\leq j\leq r, \ {\rm and}  \ L''=\check{L}'', \\
  &({\rm ii}) \  L'=\check{L}', \ {\rm and} \ L_0''\overset{1}{\subset}\check{L}_0'', \  L_j''=\check{L}_j'' \ {\rm for} \ j\neq 0\ {\rm mod}\ n,\\
  &({\rm iii}) \  L'=\check{L}', \ {\rm and} \ L_0''\overset{1}{\supset}\check{L}_0'', \ L_j''=\check{L}_j'' \ {\rm for} \ j\neq 0\ {\rm mod}\ n.
\end{align*}

We now compute the number $\sharp S$ in case $({\rm i})$.
This amounts to count all lines $<u>$, spanned by the vector $u$ such that $L_0+<u> \subset L_1$ and $\pi''(L_0+<u>)=L_0'', \pi^{\natural}(L_0+<u>)=L_0'+\check{L}_0'$.
We need to find those $u$ such that $u+V''=u'$,
where $u'$ is a fixed element in $V''^{\bot}/V''$ such that $L_0'+<u'>=L_0'+\check{L}_0'$.
Then $u$ is of the form $t+w$ where $w\in L_1''$ and $t$ is a fixed element in $V''^{\bot}$ such that $t+V''=u'$.
Since adding $w$ by any vector in $L_0''$ does not change the resulting space $L_0+<u>$, we see that the freedom of choice for $w$ is $L_1''$ mod $L_0''$,
i.e., $L_1''/L_0''$.
So the value of $\widetilde{\Delta}^\mak d(\mbf T_0)(L',\check{L}',L'',\check{L}'')$ is equal to
\begin{align*}
v^{1-|\check{L}_1/\check{L}_0|}q^{|L_1''/L_0''|}=v^{1-|\check{L}_1'/\check{L}_0'|}
v^{-|\check{L}_0''/\check{L}_{-1}''|+|\check{L}_1''/\check{L}_0''|}=(\mbf T_0'\otimes \mbf k_0'')(L',\check{L}',L'',\check{L}'').
\end{align*}

For case $({\rm ii})$, $S$ consists of only one element.
So we see that the value of $\wit{\Delta}^\mak d(\mbf T_0)(L',\check{L}',L'',\check{L}'')$ is equal to
\begin{align*}
v^{1-|\check{L}_1/\check{L}_0|}=v^{1-|\check{L}_1'/\check{L}_0|}v^{-|\check{L}_1''/
\check{L}_0''|}v^{-|\check{L}_0''/\check{L}_{-1}''|}=(v\mbf H_1'^{-1}\otimes
\mbf e_0''\mbf h_0''^{-1})(L',\check{L}',L'',\check{L}'').
\end{align*}

For case $({\rm iii})$, we amount to count the lattice $\mcal L$ such that $|L_0/\mcal L|=1$ and $\mcal L\cap V''=\check{L}_0''$.
Also, we have that the number of $\mcal L$ is equal $\sharp S$, then
\begin{align*}
  \sharp S&=\sharp\{\mcal L \mid \check{L}_0''+L_{-1}\subset \mcal L\overset{1}{\subset}L_0\}-\sharp\{\mcal L \mid L_0''+L_{-1}\subset
  \mcal L\overset{1}{\subset}L_0\}\\
   &=q^{|L_0'/L_{-1}'|+|L_1''/L_0''|}=q^{|\check{L}_0'/\check{L}_{-1}'|+
   |\check{L}_1''/\check{L}_0''|-1}.
\end{align*}
 Thus, the value of $\wit{\Delta}^\mak d(\mbf T_0)(L',\check{L}',L'',\check{L}'')$ is equal to
\begin{align*}
  v^{1-|\check{L}_1/\check{L}_0|}q^{|\check{L}_0'/\check{L}_{-1}'|
  +|\check{L}_1''/\check{L}_0''|-1}&=v^{|\check{L}_1'/\check{L}_0'|-1}v^{-|\check{L}_0''/
  \check{L}_{-1}''|+|\check{L}_1''/\check{L}_0''|} \\
   & =(v^{-1}\mbf H_1'\otimes \mbf f_0''\mbf h_1'')(L',\check{L}',L'',\check{L}'').
\end{align*}
We check the identity of $\mbf T_0$, the case of $\mbf T_r$ is similar,
and we omit it.
\end{proof}
Recall $\Lambda_{n,d}$ in the affine type $\mrm A$ from \cite{FLLLW20}
\begin{align*}
\Lambda_{n,d}=\bigg\{\lambda=(\lambda_i)_{i \in \mbb Z} \in \mbb N^{\mbb Z} \ \Big| \ \lambda_i=\lambda_{i+n}, \ \forall i \in \mbb Z, \ \sum_{1\leq i\leq n}\lambda_i=d \bigg\}.
\end{align*}
Similarly, the set $\mcal X_{n,d}$ can be decomposed as follows:
\begin{align*}
\mcal{X}_{n,d}=\bigsqcup_{\mbf a=(a_i)\in \Lambda_{n,d}}\mcal{X}_{n,d}(\mbf a), \ \  \text{where} \ \mcal{X}_{n,d}(\mbf a)=\{L\in \mcal X_{n,d} \mid |L_i/L_{i-1}|=a_i, \forall i\in \mbb Z\}.
\end{align*}

Given $\mbf a,\mbf b \in \Lambda_{n,d}^{\mak d}$,
let $\wit{\mbf S}_{n,d}^{\mak d}(\mbf b,\mbf a)$ be the subspace of $\wit{\mbf S}_{n,d}^{\mak d}$ spanned by the elements $[A^\pm]$ such that ${\rm ro}(\mak a)=\mbf b$ and ${\rm co}(\mak a)=\mbf a$ for all $A \in \Xi_{n,d}$. Similarly, for $\mbf a,\mbf b \in \Lambda_{n,d}$,
we define the affine type $\mrm A$ counterpart $\mbf S_{n,d}(\mbf b,\mbf a)$.
Let $\wit{\Delta}_{\mbf b',\mbf a',\mbf b'',\mbf a''}^{\mak d}$
be the component of $\wit{\Delta}^{\mak d}$ from $\wit{\mbf S}_{n,d}^{\mak d}(\mbf b,\mbf a)$ to $\wit{\mbf S}_{n,d'}^{\mak d}(\mbf b',\mbf a')\otimes \mbf S_{n,d''}(\mbf b'',\mbf a'')$ such that $b_i=b_i'+b_i''+b_{1-i}''$ and $ a_i=a_i'+a_i''+a_{1-i}''$, for $i \in \mbb Z$.
We set
\begin{align*}
s(\mbf b',\mbf a',\mbf b'',\mbf a'')=\sum_{1\leq k\leq j\leq n}b_k'b_j''-a_k'a_j'',
\end{align*}
and
\begin{align*}
u(\mbf b'',\mbf a'')=\frac{1}{2}\bigg(\sum_{\overset{1\leq k,j\leq n}{k+j\leq n+1}}b_k''b_j''-a_k''a_j''+\sum_{r< k \leq n}a_k''-b_k''\bigg),
\end{align*}
for all $\mbf b',\mbf a' \in \Lambda_{n,d'}^{\mak d}$ and $\mbf b'',\mbf a'' \in \Lambda_{n,d''}$.
We renormalize $\wit{\Delta}^{\mak d}$ to be $\Delta^{\mak d^\dagger}$ by letting
\begin{align*}
\Delta_{\mbf b',\mbf a',\mbf b'',\mbf a''}^{\mak d^\dagger} & =v^{s(\mbf b',\mbf a',\mbf b'',\mbf a'')+u(\mbf b'',\mbf a'')}
\wit{\Delta}_{\mbf b',\mbf a',\mbf b'',\mbf a''}^\mak d
  \\
\Delta^{\mak d^\dagger} &= \bigoplus_{\mbf b',\mbf a',\mbf b'',\mbf a''}
\Delta_{\mbf b',\mbf a',\mbf b'',\mbf a''}^{\mak d^\dagger}: \wit{\mbf S}_{n,d}^\mak d\rightarrow\wit{\mbf S}_{n,d'}^
\mak d\otimes\mbf S_{n,d}.
\end{align*}

Now let us study the restriction of $\Delta^{\mak d^\dagger}$ to $\mbf U_{n,d}^\mak d$.
\begin{proposition}
Let $d=d'+d''$.
For all $i \in [1,r-1]$, we have
\begin{align*}
   \Delta^{\mak d^\dagger}(\mbf E_i) &=\mbf E_i'\otimes \mbf k_i''+1\otimes \mbf e_i''+\mbf K_i'\otimes \mbf f_{n-i}''\mbf k_i'', \\
   \Delta^{\mak d^\dagger}(\mbf F_i) &=\mbf F_i'\otimes \mbf k_{n-i}''+\mbf K_i'^{-1}\otimes \mbf k_{n-i}''\mbf f_i''+1\otimes \mbf e_{n-i}'' ,\\
   \Delta^{\mak d^\dagger}(\mbf T_0) &=\mbf T_0'\otimes \mbf k_0''+v^{-2d'-d''+1}\otimes \mbf e_0''+v^{-2+2d'+d''}\otimes \mbf f_0''\mbf k_0'',\\
   \Delta^{\mak d^\dagger}(\mbf T_r) &=\mbf T_r'\otimes \mbf k_r''+1\otimes \mbf e_r''+v^{-1}\otimes\mbf f_r''\mbf k_r'',\\
   \Delta^{\mak d^\dagger}(\mbf K_i) &=\mbf K_i'\otimes \mbf k_i''\mbf k_{n-i}''^{-1}.
\end{align*}
Here the superscripts follows the same convention as in Proposition \ref{1000}.
\end{proposition}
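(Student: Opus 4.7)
The plan is to deduce this proposition directly from Proposition \ref{1000} by applying the renormalization $\Delta^{\mak d^\dagger} = v^{s + u}\, \wit{\Delta}^\mak d$ componentwise. First, for each generator $X \in \{\mbf E_i, \mbf F_i, \mbf T_0, \mbf T_r, \mbf K_i\}$ and each summand in the expression of $\wit{\Delta}^\mak d(X)$ given in Proposition \ref{1000}, identify the unique weight quadruple $(\mbf b', \mbf a', \mbf b'', \mbf a'')$ on which that summand is supported. For example, in $\wit{\Delta}^\mak d(\mbf E_i)$ the three summands correspond respectively to (i) raising across $L_i \overset{1}{\subset} L_i'$ taking place entirely in $V'$, so $\mbf b'' = \mbf a''$ and $\mbf b' - \mbf a' = (e_i - e_{i+1}) + (e_{n-i} - e_{n+1-i})$; (ii) raising at index $i$ in $V''$; and (iii) the reflected case where raising happens at index $n-i$ in $V''$ (seen as a lowering by the duality $L_i^\ast = L_{n-i}$). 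Analogous dichotomies govern $\mbf F_i$, $\mbf T_0$, and $\mbf T_r$.

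Next, compute $s(\mbf b', \mbf a', \mbf b'', \mbf a'') + u(\mbf b'', \mbf a'')$ on each component. Because the weight shifts introduced by each summand are concentrated at only a handful of indices, both sums telescope into short explicit expressions. The resulting $v$-power absorbs the explicit $\mbf h''_{\bullet}$ and $\mbf H'_{\bullet}$ prefactors appearing in the raw formula of Proposition \ref{1000}, collapsing them into $\mbf k''_\bullet$ and $\mbf K'_\bullet$ (or scalar powers of $v$), and producing the cleaner expressions stated here. The cases of $\mbf E_i$, $\mbf F_i$, and $\mbf K_i$ each reduce to a few lines of bookkeeping; in particular $\mbf K_i$ lives in the weight component $\mbf b' = \mbf a'$, $\mbf b'' = \mbf a''$, where one checks $s + u = 0$, so $\Delta^{\mak d^\dagger}(\mbf K_i) = \wit{\Delta}^\mak d(\mbf K_i)$.

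The main obstacle I expect is the generator $\mbf T_0$, whose formula features the anomalous exponents $-2d' - d'' + 1$ and $-2 + 2d' + d''$. These arise because the action of $\mbf T_0$ sits at the boundary index $0$, which under the reflection symmetry $i \mapsto 1 - i$ couples non-trivially with the term $\tfrac{1}{2}\sum_{r < k \leq n}(a_k'' - b_k'')$ in $u(\mbf b'', \mbf a'')$, as well as with both halves of the quadratic sum defining $u$. One must carefully track the period-$n$ identifications in $\Lambda_{n,d''}$ and separate the contributions of the reflected indices $0$ and $1$, together with the global totals $|L'| = 2d'$ and $|L''| = d''$, to extract the stated $v$-power. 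By contrast, $\mbf T_r$ involves the "interior" boundary $k = r$, so the $\sum_{r<k\leq n}$ contribution collapses and the formula is nearly symmetric with the $\mbf E_{r-1}, \mbf F_{r-1}$ calculations. Once the $\mbf T_0$ computation is pinned down, the remaining identities follow by the same pattern.
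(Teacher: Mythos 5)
Your proposal is correct and takes the same route the paper intends: the paper states this proposition without proof, treating it as a direct consequence of Proposition \ref{1000} combined with the definition $\Delta^{\mak d^\dagger}=v^{s+u}\,\wit{\Delta}^{\mak d}$, which is exactly the componentwise bookkeeping you describe (including the check that $s+u=0$ on the diagonal weight component for $\mbf K_i$, and the coupling of the index-$0$ terms with the global sums $2d'$, $d''$ that produces the anomalous exponents for $\mbf T_0$).
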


Recall $\xi_{d,i,c}:\mbf S_{n,d}\rightarrow\mbf S_{n,d}$ in affine type $\mrm A$ from \cite{FL15}.
We define the algebra homomorphism
\begin{equation}\label{1212}
  \Delta^{\mak d}=(1\otimes \xi_{d'',0,-(2d'+d''-1)})\circ\Delta^{\mak d^\dagger}: \wit{\mbf S}_{n,d}^\mak d\rightarrow\wit{\mbf S}_{n,d'}^
  \mak d\otimes\mbf S_{n,d''}.
\end{equation}

Let $\mbf a, \mbf b \in \Lambda_{n,d}^\mak d$.
Fix $L \in \mcal X_{n,d}^\mak d(\mbf b)$ and let $P_{\mbf b}=\text{Stab}_{{\rm O}_F(V)}(L)$.
We have a natural embedding
\begin{align*}
\iota_{\mbf b, \mbf a}: \mcal X_{n,d}^\mak d(\mbf a)\rightarrow\mcal X_{n,d}^\mak d
(\mbf b)\times \mcal X_{n,d}^\mak d(\mbf a), \ \ \  L'\mapsto (L,L').
\end{align*}
It is well known that $\iota_{\mbf b,\mbf a}$ induces the following isomorphism of $\mcal A$-modules:
\begin{align*}
\iota_{\mbf b, \mbf a}^\ast:\mcal A_{{\rm O}_F(V)}(\mcal X_{n,d}^\mak d(\mbf b)\times \mcal X_{n,d}^\mak d(\mbf a))\rightarrow\mcal A_{P_{\mbf b}}
(\mcal X_{n,d}^\mak d(\mbf a)).
\end{align*}
Let
\begin{align*}
\mcal X_{\mbf a, \mbf a', \mbf a''}^\mak d=\{L\in \mcal X_{n,d}^\mak d(\mbf a) \mid \pi^\natural(L)\in \mcal X_{n,d'}^\mak d(\mbf a'),\pi''(L)\in \mcal X_{n,d''}(\mbf a'')\}.
\end{align*}
Then we have the following diagram
\begin{align*}
\mcal X_{n,d}^\mak d(\mbf a)\overset{\iota}{\longleftarrow}
\mcal X_{\mbf a, \mbf a', \mbf a''}^\mak d\overset{\pi}
{\longrightarrow}\mcal X_{n,d'}^\mak d(\mbf a')\times\mcal X_{n,d''}
(\mbf a''),
\end{align*}
where $\iota$ is the imbedding and  $\pi(L)=(\pi^\natural(L),\pi''(L))$.
By identifying $\mcal A_{P_{\mbf b'}\times P_{\mbf b''}}(\mcal X_{n,d}^\mak d(\mbf a')\times \mcal X_{n,d''}(\mbf a''))=\mcal A_{P_{\mbf b'}}
(\mcal X_{n,d'}^\mak d(\mbf a'))\times \mcal A_{P_{\mbf b''}}(\mcal X_{n,d''}(\mbf a''))$,
we have the following map
\begin{align*}
\pi_!\iota^\ast: \mcal A_{P_{\mbf b}}(\mcal X_{n,d}^\mak d(\mbf a))\longrightarrow \mcal A_{P_{\mbf b'}}(\mcal X_{n,d'}^\mak d(\mbf a'))\times \mcal A_{P_{\mbf b''}}(\mcal X_{n,d''}(\mbf a'')).
\end{align*}
By a similar argument as for \cite[Lemma 1.3.5]{FL15}, the following diagram commutes:
   \begin{equation}
  \label{comm-coproduct}
  \xymatrixrowsep{.5in}
\xymatrixcolsep{.8in}
\xymatrix{
\mcal A_{{\rm O}_F(V)}(\mcal X_{n,d}^\mak d(\mbf b)\times\mcal X_{n,d}^\mak d(\mbf a)) \ar[d]_{}
\ar@{->}[r]^-{\iota^*_{\mbf b, \mbf a}}
\ar@{->}[d]_{\widetilde \Delta^\mak d_{\mbf b', \mbf a', \mbf b'', \mbf a''}}
&
\mcal A_{P_{\mbf b}}(\mcal X_{n,d}^\mak d(\mbf a))
\ar@{->}[d]^{ \pi_! \iota^*}
\\
\smxylabel{
\substack{
\mcal A_{{\rm O}_F(2d')}(\mcal X_{n,d'}^\mak d(\mbf b')\times \mcal X_{n,d'}^\mak d(\mbf a') )  \\
\otimes \\
   \mcal A_{{\rm O}_F(2d'')}(\mcal X_{n,d''}(\mbf b'')\times \mcal X_{n,d'}(\mbf a'') )
}
}
\ar@{->}[r]^-{ \iota^*_{\mbf b', \mbf a'} \otimes \iota^*_{\mbf b'', \mbf a''}}
&
\smxylabel{
\mcal A_{P_{\mbf b'}}(\mcal X_{n,d'}^\mak d(\mbf a'))\otimes \mcal A_{P_{\mbf b''}}(\mcal X_{n,d''}(\mbf a''))
}
}
\end{equation}

Recall that $\Delta^\mak d:\wit{\mbf S}_{n,d}^\mak d\rightarrow
\wit{\mbf S}_{n,d'}^\mak d\otimes\mbf S_{n,d''}$
from (\ref{1212}).
By an argument similar to \cite[Section 2.4]{FL15} and (\ref{comm-coproduct}), the positivity for
$\Delta^\mak d$ follows from \cite[Theorem 8]{Br03}.
\begin{proposition}\label{chen22}
For $A \in \Xi_{n,d}$, write
\begin{align*}
\Delta^\mak d(\{A^\pm\}_d)=\sum_{A'\in \Xi_{n,d'},A''\in \Theta_{n,d''}}h_A^{A',A''}\{A'^\pm\}_{d'}^\mak d
\otimes\{A''\}_{d''}^\mbf a.
\end{align*}
Then $h_{A}^{A',A''} \in \mbb N[v,v^{-1}]$ for all $A,A'$ and $A''$.
\end{proposition}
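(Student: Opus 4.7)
The plan is to upgrade Proposition \ref{chen22} from a statement about equivariant functions to a statement about equivariant perverse sheaves, where positivity becomes automatic via the decomposition theorem. All the normalization factors appearing in the definition of $\Delta^\mak d$ are monomials in $v$ with coefficient $1$, so it suffices to establish positivity for the unnormalized map $\wit{\Delta}_{\mbf b',\mbf a',\mbf b'',\mbf a''}^\mak d$ on canonical basis elements, and then trace through the twist $\xi_{d'',0,-(2d'+d''-1)}$ (which acts on $\mbf S_{n,d''}$ by a global scalar on each $(\mbf b'',\mbf a'')$-component and hence preserves the sign of the structure constants).

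The first step is to pass from $\wit{\Delta}^\mak d$ to its sheaf-theoretic avatar. The definition of $\wit{\Delta}^\mak d$ in \eqref{defi} shows that on the $(\mbf b',\mbf a',\mbf b'',\mbf a'')$-component it is computed, up to the normalization above, by fixing $L\in\mcal X_{n,d}^\mak d(\mbf b)$, restricting a function along $\iota_{\mbf b,\mbf a}:\mcal X_{n,d}^\mak d(\mbf a)\hookrightarrow\mcal X_{n,d}^\mak d(\mbf b)\times\mcal X_{n,d}^\mak d(\mbf a)$, and then summing the resulting $P_{\mbf b}$-invariant function along the fibres of $\pi:\mcal X_{\mbf a,\mbf a',\mbf a''}^\mak d\to\mcal X_{n,d'}^\mak d(\mbf a')\times\mcal X_{n,d''}(\mbf a'')$. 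This is exactly the commutative diagram \eqref{comm-coproduct}. Via the Grothendieck sheaf--function correspondence, $\iota_{\mbf b,\mbf a}^\ast$ corresponds to the pullback $\iota^\ast$ on equivariant derived categories, and the convolution structure identifies the standard/canonical basis elements with shifts of constant sheaves/IC complexes on the corresponding orbits. Thus it suffices to show that $\pi_!\iota^\ast\mrm{IC}_\mak a$ is a pure complex whose IC-decomposition has multiplicities in $\mbb N[v,v^{-1}]$.

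The second step is to recognize $\pi_!\iota^\ast$ as a hyperbolic localization functor. The choice of an isotropic subspace $V''\subset V$ produces a cocharacter $\mbb G_m\to\rm SO_F(V)$ acting with weight $+1$ on $V''$, weight $-1$ on a chosen isotropic complement, and weight $0$ on $V''^\perp/V''$; under this action, $\mcal X_{\mbf a,\mbf a',\mbf a''}^\mak d$ is (up to shift) the attracting variety over the fixed-point subvariety $\mcal X_{n,d'}^\mak d(\mbf a')\times\mcal X_{n,d''}(\mbf a'')$ inside $\mcal X_{n,d}^\mak d(\mbf a)$, and the maps $\iota,\pi$ are the corresponding inclusion and retraction. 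This is precisely the geometric setup considered in the finite type $\mrm D$ work \cite[\S2.4]{FL15} and in \cite{Mc12} for affine type $\mrm A$; the existence of the torus relies on the splitting of $V$ as a module over itself transversally to the symmetric form. Braden's theorem \cite[Theorem 8]{Br03} then asserts that $\pi_!\iota^\ast$ is canonically isomorphic to the hyperbolic restriction functor, sends pure complexes to pure complexes of the same weight, and in particular sends an $\mrm{IC}$ complex to a direct sum of shifted $\mrm{IC}$-complexes on the fixed-point variety whose multiplicities are polynomials in $v,v^{-1}$ with non-negative integer coefficients. Tracing this through the isomorphisms in \eqref{comm-coproduct} gives the desired positivity of $h_A^{A',A''}$ for $\wit{\Delta}^\mak d$, and the $v$-monomial renormalization and the twist $\xi_{d'',0,-(2d'+d''-1)}$ preserve this.

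The main obstacle will be the second step: constructing the required $\mbb G_m$-action inside $\rm SO_F(V)$ on the affine lattice model of $\mcal X_{n,d}^\mak d$ and verifying that the attracting/fixed-point decomposition is exactly $\mcal X_{\mbf a,\mbf a',\mbf a''}^\mak d\to\mcal X_{n,d'}^\mak d(\mbf a')\times\mcal X_{n,d''}(\mbf a'')$. In affine type $\mrm D$ the symmetric form adds periodicity and parity constraints that must be compatible with the cocharacter; once the torus is chosen so that $V''$ is a sum of weight spaces, the verification of Braden's contraction hypothesis reduces to the lattice-theoretic identities $L_i''=L_i\cap V''$ and $L_i'=(L_i\cap V''^\perp+V'')/V''$ that already appear in the definition of $\mcal Z_{L',L''}^\mak d$, and the remaining positivity follows formally.
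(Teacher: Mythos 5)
Your proposal is correct and follows essentially the same route as the paper, which likewise deduces the positivity from the commutative diagram \eqref{comm-coproduct} together with Braden's hyperbolic localization theorem \cite[Theorem 8]{Br03}, by the argument of \cite[Section 2.4]{FL15}. You have simply spelled out in more detail the sheaf--function dictionary, the construction of the $\mbb G_m$-action from the isotropic subspace $V''$, and the harmlessness of the $v$-monomial renormalizations, all of which the paper leaves implicit.
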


Let $\wit{\mbf U}_{n,d}^\mak d$ be the subalgebra of $\wit{\mbf S}_{n,d}$ generated by all elements $[B^\pm]$ such that $B$ or $B-E_\theta^{h,h+1}$ is diagonal for various $h \in \mbb Z$.
It is clear that $\wit{\mbf U}_{n,d}^\mak d$ is generated by $\mbf E_i,\mbf F_i, \mbf K_i$ and $\mbf T_0, \mbf T_r$ for $i \in [1,r-1]$.

\begin{proposition}\label{1216}
Let $d=d'+d''$.
 We have a homomorphism $\Delta^\mak d:\widetilde{\mbf U}_{n,d}^\mak d\rightarrow\widetilde{\mbf U}_{n,d'}^\mak d\otimes \mbf U_{n,d''}$.
 More precisely, for all $i \in [1,r-1]$, we have
\begin{align*}
  \Delta^\mak d(\mbf E_i) &=\mbf E_i'\otimes \mbf k_i''+1\otimes \mbf e_i''+\mbf K_i'\otimes \mbf f_{n-i}''\mbf k_i'',  \\
  \Delta^\mak d(\mbf F_i) &=\mbf F_i'\otimes \mbf k_{n-i}''+\mbf K_i'^{-1}\otimes \mbf k_{n-i}''\mbf f_i''+1\otimes \mbf e_{n-i}'', \\
  \Delta^\mak d(\mbf T_0)&=\mbf T_0'\otimes\mbf k_0''+1\otimes \mbf e_0''+v^{-1}\otimes \mbf f_0\mbf k_0,\\
  \Delta^\mak d(\mbf T_r)&=\mbf T_r'\otimes\mbf k_r''+1\otimes \mbf e_r''+v^{-1}\otimes \mbf f_r\mbf k_r,\\
  \Delta^\mak d(\mbf K_i) &=\mbf K_i'\otimes \mbf k_i''\mbf k_{n-i}''^{-1}.
\end{align*}
\end{proposition}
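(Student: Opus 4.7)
The plan is to reduce the statement to a direct computation on Chevalley generators, using the fact that $\Delta^\mak d$ is already an algebra homomorphism at the Schur algebra level.

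First I would observe that the algebra homomorphism property is essentially free. The map $\wit{\Delta}^\mak d:\wit{\mbf S}_{n,d}^\mak d\to\wit{\mbf S}_{n,d'}^\mak d\otimes\mbf S_{n,d''}$ is a homomorphism by the proposition preceding Proposition \ref{1000}. The renormalization $\Delta^{\mak d^\dagger}=\bigoplus v^{s+u}\,\wit{\Delta}^\mak d_{\mbf b',\mbf a',\mbf b'',\mbf a''}$ preserves this property because the weights $s(\mbf b',\mbf a',\mbf b'',\mbf a'')$ and $u(\mbf b'',\mbf a'')$ are additive along the bigrading by row and column vectors (this is the standard ``twisting by a weight cocycle'' argument, and it is exactly the twist that has already been verified in the affine type $\mrm C$ case in \cite{FLLLW20} and finite type $\mrm D$ case in \cite{FL15}). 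Composing with $1\otimes\xi_{d'',0,-(2d'+d''-1)}$, which is an algebra automorphism of $\mbf S_{n,d''}$ by \cite{FL15}, keeps $\Delta^\mak d$ an algebra homomorphism.

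Next I would compute the images of the generators. Recall from \cite{FL15} that $\xi_{d'',0,c}$ is the rescaling automorphism that sends $\mbf e_0''\mapsto v^{-c}\mbf e_0''$ and $\mbf f_0''\mapsto v^{c}\mbf f_0''$ while fixing $\mbf k_j''$ for all $j$ and fixing $\mbf e_j'',\mbf f_j''$ for $j\not\equiv 0\pmod{n}$. Applying $1\otimes\xi_{d'',0,-(2d'+d''-1)}$ to the explicit formulas for $\Delta^{\mak d^\dagger}$ given in the previous proposition therefore leaves the formulas for $\mbf E_i$, $\mbf F_i$, $\mbf K_i$ (for $i\in[1,r-1]$) and $\mbf T_r$ unchanged, since node $0$ never appears in these, while for $\mbf T_0$ the prefactors $v^{-2d'-d''+1}$ and $v^{-2+2d'+d''}$ get multiplied by $v^{2d'+d''-1}$ and $v^{-(2d'+d''-1)}$ respectively, yielding the claimed coefficients $1$ and $v^{-1}$.

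Finally, inspecting the six resulting formulas, every term on the right-hand side is a tensor product of a generator (or $1$) of $\wit{\mbf U}_{n,d'}^\mak d$ with a monomial in the Chevalley generators of $\mbf U_{n,d''}$. Hence each of $\mbf E_i,\mbf F_i,\mbf K_i,\mbf T_0,\mbf T_r$ is sent into $\wit{\mbf U}_{n,d'}^\mak d\otimes\mbf U_{n,d''}$. Since these elements generate $\wit{\mbf U}_{n,d}^\mak d$, the restriction $\Delta^\mak d\big|_{\wit{\mbf U}_{n,d}^\mak d}$ lands inside $\wit{\mbf U}_{n,d'}^\mak d\otimes\mbf U_{n,d''}$, as desired. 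The main obstacle is not the present proposition but rather the earlier identification of the twist constants: keeping track of the two bookkeeping functions $s$ and $u$ through the quadruple decomposition $(\mbf b',\mbf a',\mbf b'',\mbf a'')$, and verifying that precisely the shift $c=-(2d'+d''-1)$ at node $0$ cancels the asymmetry in $\Delta^{\mak d^\dagger}(\mbf T_0)$ arising from the split $V=V'\oplus V''$; once those were correctly pinned down in the definition \eqref{1212}, the present proposition reduces to the direct substitution sketched above.
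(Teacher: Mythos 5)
Your proposal is correct and follows exactly the route the paper intends: Proposition \ref{1216} is stated without proof because it is the immediate composite of the formulas for $\Delta^{\mak d^\dagger}$ with the definition \eqref{1212}, and your computation (the twist $\xi_{d'',0,-(2d'+d''-1)}$ rescales only the node-$0$ terms, turning $v^{-2d'-d''+1}$ into $1$ and $v^{2d'+d''-2}$ into $v^{-1}$, while the generators' images visibly lie in $\wit{\mbf U}_{n,d'}^\mak d\otimes\mbf U_{n,d''}$) is precisely the omitted verification. No gaps.
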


Recall the comultiplication $\Delta$ in the affine type $\mrm A$ from \cite{FL15}. This is an algebra homomorphism
\begin{align*}
\Delta:\mbf S_{n,d}\rightarrow\mbf S_{n,d'}\otimes \mbf S_{n,d''},
\end{align*}
defined by
\begin{equation}\label{chen1}
\begin{split}
\Delta(\mbf e_i)&=\mbf e_i'\otimes \mbf k_i''+1\otimes \mbf e_i'',\\
\Delta(\mbf f_i)&=\mbf f_i'\otimes 1+\mbf k_i'^{-1}\otimes \mbf f_i'',\\
\Delta(\mbf k_i)&=\mbf k_i'\otimes \mbf k_i'',  \ \ \ \  \forall i\in [1,n].
\end{split}
\end{equation}
Here the superscripts follows the same convention as in Proposition \ref{1000}.
\begin{proposition}\label{1217}
The following coassociativity holds on $\widetilde{\mbf U}_{n,d}^\mak d$:
\begin{align*}
(1\otimes \Delta)\Delta^\mak d=(\Delta^\mak d\otimes 1)\Delta^\mak d.
\end{align*}
\end{proposition}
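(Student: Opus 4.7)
The plan is as follows. Since both composites $(1\otimes \Delta)\Delta^\mak d$ and $(\Delta^\mak d \otimes 1)\Delta^\mak d$ are obtained by composing the algebra homomorphisms $\Delta^\mak d \colon \widetilde{\mbf U}_{n,d}^\mak d \to \widetilde{\mbf U}_{n,d'}^\mak d \otimes \mbf U_{n,d''}$ of Proposition \ref{1216} with the affine type $\mrm A$ comultiplication $\Delta$ recalled in (\ref{chen1}), each composite is itself an algebra homomorphism. By the lemma asserting that $\widetilde{\mbf U}_{n,d}^\mak d$ is generated by $\mbf E_i, \mbf F_i, \mbf K_i$ (for $1\le i\le r-1$) together with $\mbf T_0$ and $\mbf T_r$, it therefore suffices to verify the identity on each of these generators. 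I would fix once and for all a triple decomposition $d=d'+d_1+d_2$ with $d''=d_1+d_2$, and mark the three tensor factors by superscripts $',\ '',\ '''$ for pieces in $\widetilde{\mbf U}_{n,d'}^\mak d$, $\mbf U_{n,d_1}$, and $\mbf U_{n,d_2}$ respectively.

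The case of $\mbf K_i$ is immediate: both sides yield $\mbf K_i'\otimes \mbf k_i''\mbf k_{n-i}''^{-1}\otimes \mbf k_i'''\mbf k_{n-i}'''^{-1}$ via $\Delta(\mbf k_j)=\mbf k_j'\otimes \mbf k_j''$. For $\mbf E_i$ with $i\in [1,r-1]$, I would expand $\Delta^\mak d(\mbf E_i)=\mbf E_i'\otimes \mbf k_i''+1\otimes \mbf e_i''+\mbf K_i'\otimes \mbf f_{n-i}''\mbf k_i''$ and apply $1\otimes \Delta$ factor by factor. The mirror calculation, first applying $\Delta^\mak d$ with composition $d=(d'+d_1)+d_2$ and then $\Delta^\mak d\otimes 1$, yields a matching three-tensor sum once one substitutes $\Delta(\mbf e_j)$, $\Delta(\mbf f_j)$, and $\Delta(\mbf k_j)$ from (\ref{chen1}). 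The case of $\mbf F_i$ is entirely parallel. These computations are structurally the same as the type $\mrm A$ and type $\mrm C$ coassociativity checks in \cite{Lu00,FLLLW20}, and the powers of $v$ appearing in Proposition \ref{1216} arrange themselves to match term-by-term.

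The main hurdle is the special generators $\mbf T_0$ and $\mbf T_r$, which are responsible for the type $\mrm D$ phenomena. Here $\Delta^\mak d(\mbf T_0)=\mbf T_0'\otimes \mbf k_0''+1\otimes \mbf e_0''+v^{-1}\otimes \mbf f_0\mbf k_0$ contains scalar-coefficient summands in the first tensor slot, so applying $(1\otimes \Delta)$ requires feeding $\mbf e_0''$ and $\mbf f_0\mbf k_0$ into the type $\mrm A$ comultiplication, while applying $(\Delta^\mak d\otimes 1)$ invokes the full definition of $\Delta^\mak d$ together with the renormalization by $v^{s(\cdot)+u(\cdot)}$ inside $\Delta^{\mak d^\dagger}$ and the subsequent twist by $\xi_{d'',0,-(2d'+d''-1)}$ from (\ref{1212}). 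I expect the $\mrm D$-type tensor factor to agree automatically, since geometrically the iterated map depends only on a pair of isotropic subspaces $V_2''\subset V_2''\oplus V_1''$ and not on the order in which they are chosen, by the same transitivity of orbits used in the proof of \cite[Proposition 1.5]{Lu00} that established the analog on $\widetilde{\mbf S}_{n,d}^\mak d$. The delicate part, and the principal obstacle, is to verify that the two scalar exponents produced by the two renormalization factors coincide; I plan to handle it by a direct comparison of the $v$-powers term-by-term, using that $\xi_{d'',0,c}$ is multiplication by an explicit scalar on each weight space and that $s(\cdot)+u(\cdot)$ is additive under composition of the two decompositions.
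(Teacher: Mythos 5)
Your proposal is correct and matches the paper's approach: the paper's proof is simply ``we can compute directly on the generators,'' i.e.\ exactly the generator-by-generator verification you outline, using that both composites are algebra homomorphisms and that $\widetilde{\mbf U}_{n,d}^\mak d$ is generated by $\mbf E_i,\mbf F_i,\mbf K_i,\mbf T_0,\mbf T_r$. Your worry about tracking the renormalization exponents in $\Delta^{\mak d^\dagger}$ and $\xi_{d'',0,c}$ is unnecessary, since Proposition \ref{1216} already records $\Delta^\mak d$ on the generators with coefficients independent of $d',d''$, so the check on $\mbf T_0,\mbf T_r$ is as immediate as the one for $\mbf E_i,\mbf F_i$.
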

\begin{proof}
we can compute directly on the generators.
\end{proof}
Now setting $d'=0$, we have $\mbf E_i'=0,\mbf F_i'=0,\mbf T_0'=0,\mbf T_r'=0$ and $\mbf K_i'=1$ in $\widetilde{\mbf S}_{n,0}^\mak d$, and $\Delta^\mak d$ becomes the following algebra homomorphism
\begin{equation}\label{chen2}
\begin{split}
&\jmath_{n,d}:\widetilde{\mbf S}_{n,d}^\mak d\rightarrow\mbf S_{n,d}\\
&\jmath_{n,d}(\mbf E_i)=\mbf e_i+\mbf k_i\mbf f_{n-i},  \ \ \ \ \ \ \ \ \ \
\jmath_{n,d}(\mbf F_i)=\mbf e_{n-i}+\mbf f_i\mbf k_{n-i},\\
&\jmath_{n,d}(\mbf T_0)=\mbf e_0+v^{-1}\mbf f_0\mbf k_0, \ \ \ \ \  \ \
\jmath_{n,d}(\mbf T_r)=\mbf e_r+v^{-1}\mbf f_r\mbf K_r,\\
&\jmath_{n,d}(\mbf K_i)=\mbf k_i\mbf k_{n-i}^{-1},  \ \ \  \forall i\in [1,r-1].
\end{split}
\end{equation}
It follows by restriction that we have also a homomorphism $\jmath_{n,d}: \widetilde{\mbf U}_{n,d}^\mak d\rightarrow \mbf U_{n,d}$.
The same argument as in \cite[Proposition 5.3.6]{FLLLW20} gives us the following proposition.
\begin{proposition}\label{1401}
The homomorphism $\jmath_{n,d}: \widetilde{\mbf S}_{n,d}^\mak d\rightarrow \mbf S_{n,d}$ (and $\jmath_{n,d}: \widetilde{\mbf U}_{n,d}^\mak d\rightarrow \mbf U_{n,d}$ ) is injective.
\end{proposition}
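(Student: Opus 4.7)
The plan is to follow the strategy of \cite[Proposition 5.3.6]{FLLLW20}: establish a triangularity property showing that $\jmath_{n,d}$ sends each standard basis element $[A^\pm]$ of $\wit{\mbf S}_{n,d}^\mak d$ (indexed by $A \in \Xi_{n,d}$) to a linear combination in the standard basis of $\mbf S_{n,d}$ with a distinguishable leading term, from which injectivity follows by a routine triangular-linear-algebra argument. The restriction to $\wit{\mbf U}_{n,d}^\mak d \hookrightarrow \mbf U_{n,d}$ will then be automatic.

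First I would unpack the geometric content of $\jmath_{n,d}$. Specializing $d'=0$ in $\Delta^\mak d$ forces $V''$ to be a maximal isotropic (Lagrangian) $F$-subspace of $V$ of dimension $d$, so that $V' = V''^\bot/V''=0$ and $\pi^\natural$ is constant. Only the projection $\pi'': \mcal X_{n,d}^\mak d \to \mcal X_{n,d}$, $L \mapsto L \cap V''$, remains nontrivial. From \eqref{defi} together with the normalizations encoded in $s(\mbf b', \mbf a', \mbf b'', \mbf a'') + u(\mbf b'', \mbf a'')$ and $\xi_{d,0,1-d}$, the value of $\jmath_{n,d}([A^\pm])$ at a pair $(L'', \wit L'') \in \mcal X_{n,d} \times \mcal X_{n,d}$ counts, up to an explicit power of $v$, the lifts $(L, \wit L) \in \mcal O_{(A,+)} \sqcup \mcal O_{(A,-)}$ that project to $(L'', \wit L'')$.

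Next, I would attach to each $A \in \Xi_{n,d}$ a canonical matrix $\bar A \in \Theta_{n,d}$, read off from the entries of $A$ that survive the intersection with the Lagrangian $V''$ (intuitively the ``upper-left half'' of $A$), and show that
\[
\jmath_{n,d}([A^\pm]) = c_A\, [\bar A] + \text{strictly lower terms}
\]
for some nonzero $c_A \in \mcal A$, with respect to a partial order on $\Theta_{n,d}$ compatible with (but possibly stronger than) the Bruhat order. Because the symmetry $a_{ij} = a_{1-i,1-j}$ defining $\Xi_{n,d}$ makes the whole matrix $A$ reconstructable from its upper-left half, the assignment $A \mapsto \bar A$ is injective on $\Xi_{n,d}$; standard triangularity then yields injectivity of $\jmath_{n,d}$ on $\wit{\mbf S}_{n,d}^\mak d$.

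The main obstacle will be choosing the partial order and the leading term $\bar A$ so that the triangularity is clean: one has to enumerate the lifts $(L, \wit L) \in \mcal O_{(A,\pm)}$ descending to a pair of position exactly $\bar A$, and show that configurations descending to a pair of position ``larger than'' $\bar A$ contribute only to strictly lower terms after the normalization is taken into account. The bookkeeping of the symmetries $a_{ij} = a_{1-i,1-j}$ together with the decomposition $\mcal X_{n,d}^\mak d = \mcal X_{n,d}^{\mak d,1} \sqcup \mcal X_{n,d}^{\mak d,2}$, combined with the fact that $[A^\pm] = [(A,+)]+[(A,-)]$ already sums over both components, makes the identification of the leading coefficient $c_A \neq 0$ the delicate step.
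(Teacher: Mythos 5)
Your strategy coincides with the paper's: the paper proves this proposition by simply invoking the argument of \cite[Proposition 5.3.6]{FLLLW20}, which is precisely the leading-term/triangularity argument you outline, and your geometric unpacking of the $d'=0$ specialization (only $\pi''$ survives, $V''$ Lagrangian) is correct. One caveat on the execution: the statement $\jmath_{n,d}([A^\pm]) = c_A[\bar A]+\text{strictly lower terms}$ cannot hold literally for a partial order refining the Bruhat order, because the image of a single standard basis element spreads over several type $\mrm A$ weight blocks $\mbf S_{n,d}(\mbf b'',\mbf a'')$ that are mutually incomparable --- this is already visible on generators, where $\jmath_{n,d}(\mbf E_i)=\mbf e_i+\mbf k_i\mbf f_{n-i}$ has two summands with different $({\rm ro},{\rm co})$. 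The standard repair, and what the cited argument effectively does, is to note that distinct weight blocks are automatically linearly independent, single out the distinguished block containing your $[\bar A]$ (the one supported on the rows $1,\dots,r$ that form a fundamental domain for the symmetry $a_{ij}=a_{1-i,1-j}$ on top of the $n$-periodicity, which does make $A\mapsto\bar A$ injective since $n=2r$ is even and the flip acts freely on rows), and run the triangularity within that block. With that adjustment your plan goes through, and the restriction to $\wit{\mbf U}_{n,d}^{\mak d}\rightarrow\mbf U_{n,d}$ is indeed automatic.
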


Proposition \ref{chen22} in our setting of $d'=0$ gives us the following proposition.

\begin{proposition}\label{sum canonical}
The map $\jmath_{n,d}$ sends a canonical basis element in $\widetilde{\mbf S}_{n,d}^\mak d$
 to a sum of canonical basis elements of $\mbf S_{n,d}$ with coefficients in $\mbb N[v,v^{-1}]$.
\end{proposition}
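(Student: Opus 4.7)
The plan is to deduce Proposition~\ref{sum canonical} as a direct specialization of the positivity result Proposition~\ref{chen22} to the degenerate case $d'=0$, $d''=d$. First I would identify $\widetilde{\mbf S}_{n,0}^\mak d$ with the ground ring $\mbb Q(v)$ (with $\mcal A$-form equal to $\mcal A$): when $d'=0$ the affine flag variety $\mcal X_{n,0}^\mak d$ reduces to an essentially trivial set, so $\widetilde{\mbf S}_{n,0}^\mak d$ is one-dimensional and its canonical basis consists of the unit $1$. Under this identification the codomain of $\Delta^\mak d$ collapses to $\widetilde{\mbf S}_{n,0}^\mak d \otimes \mbf S_{n,d} \simeq \mbf S_{n,d}$.

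Next I would verify that under this identification $\Delta^\mak d$ specializes to $\jmath_{n,d}$. This is precisely how $\jmath_{n,d}$ was extracted in \eqref{chen2}: setting $\mbf E_i'=\mbf F_i'=\mbf T_0'=\mbf T_r'=0$ and $\mbf K_i'=1$ in the formulas of Proposition~\ref{1216} recovers the defining formulas of $\jmath_{n,d}$ on the Chevalley generators. Since these generators together with the weight idempotents generate $\widetilde{\mbf S}_{n,d}^\mak d$, and both $\Delta^\mak d|_{d'=0}$ and $\jmath_{n,d}$ are algebra homomorphisms, the two maps agree on all of $\widetilde{\mbf S}_{n,d}^\mak d$.

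Combining these two steps, Proposition~\ref{chen22} specialized at $d'=0$ yields the expansion
\[
\jmath_{n,d}\bigl(\{A^\pm\}_d\bigr) \;=\; \sum_{A''\in\Theta_{n,d}} h_A^{\varnothing,A''}\,\{A''\}_d^\mbf a,
\]
with coefficients $h_A^{\varnothing,A''}\in \mbb N[v,v^{-1}]$, which is exactly the assertion of Proposition~\ref{sum canonical}.

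The only step requiring genuine care is the identification of $\widetilde{\mbf S}_{n,0}^\mak d$ with $\mbb Q(v)$: one must check that the type $\mrm D$ sign decomposition $\mcal X_{n,0}^\mak d = \mcal X_{n,0}^{\mak d,1}\sqcup\mcal X_{n,0}^{\mak d,2}$ collapses harmlessly when the underlying vector space is zero-dimensional, and that the ``$\pm$'' convention built into the basis element $\{A^\pm\}_d = \{(A,+)\}_d+\{(A,-)\}_d$ is compatible with this collapse, so that no spurious sign ambiguity corrupts the transfer of positivity from $\Delta^\mak d$ to $\jmath_{n,d}$. Once this bookkeeping is confirmed, the proposition follows immediately with no additional computation.
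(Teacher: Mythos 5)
Your proposal is correct and is exactly the paper's argument: the paper offers no separate proof, stating only that Proposition~\ref{chen22} in the setting $d'=0$ yields the result, with $\jmath_{n,d}$ being by construction the $d'=0$ specialization of $\Delta^\mak d$ as in \eqref{chen2}. Your additional care about identifying $\widetilde{\mbf S}_{n,0}^\mak d$ with the ground ring and checking that the sign decomposition collapses is a reasonable elaboration of the same route, not a different one.
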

\subsection{Canonical basis of $\mbf U_{n,d}^\mak d$}
Let $\mak N=[\mak b_1]\ast\cdots\ast[\mak b_m]$ be a non-zero aperiodic monomial in $\mbf S_{n,d}^\mak d$, where $\mak b_i=(B_i,\alpha_i)$.
Without loss of generality, we set that $\alpha_1=+$, then $\mak N=\mbf J_{+}[B_1^\pm]\ast\cdots\ast[B_m^\pm]$.

By a similar argument as for \cite[Proposition 5.4.2]{FLLLW20},
we have the following proposition.
\begin{proposition}\label{chen1213}
Let $\mak N$ be an aperiodic monomial in $\mbf S_{n,d}^\mak d$.
Suppose that $\mak N=\sum c_\mak a\{\mak{a}\}_d$ where $c_\mak a \in \mbb Z[v,v^{-1}]$.
If $c_\mak a\neq 0$, then $\mak a$ must be aperiodic.
\end{proposition}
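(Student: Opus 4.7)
The plan is to reduce this aperiodicity statement to its well-established affine type $\mrm A$ counterpart by means of the injective algebra homomorphism $\jmath_{n,d}: \widetilde{\mbf S}_{n,d}^\mak d \hookrightarrow \mbf S_{n,d}$ of Proposition \ref{1401}, combined with the positivity recorded in Proposition \ref{sum canonical}. The point is that both the aperiodicity of the monomial $\mak N$ and the aperiodicity of a signed matrix $\mak a=(A,\alpha)$ depend only on the underlying matrix $A$, and $\jmath_{n,d}$ is compatible with this notion: a Chevalley generator of $\widetilde{\mbf U}_{n,d}^\mak d$ gets sent by \eqref{chen2} to a sum of two Chevalley (type) monomials in $\mbf U_{n,d}$, each corresponding to an aperiodic matrix in $\Theta_{n,d}$.

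First, I would write $\mak N = \mbf J_+[B_1^\pm]\ast\cdots\ast[B_m^\pm]$ and apply $\jmath_{n,d}$. Using the explicit formulas \eqref{chen2} and their divided-power analogues, each factor $\jmath_{n,d}([B_i^\pm])$ unfolds as a $\mbb Z[v,v^{-1}]$-combination of aperiodic (type $\mrm A$) Chevalley monomials, since the matrix of each summand is of the form $E^{h,h+1}$, $E^{h+1,h}$ or a compatible diagonal twist thereof. Consequently $\jmath_{n,d}(\mak N)$ is a $\mbb Z[v,v^{-1}]$-combination of aperiodic monomials in $\mbf S_{n,d}$. Invoking now the affine type $\mrm A$ aperiodicity result \cite[Proposition 5.4.2]{FLLLW20} (originating in \cite{Lu99}), such monomials expand in the canonical basis supported only on $\Theta_{n,d}^{ap}$, giving
\begin{equation*}
\jmath_{n,d}(\mak N) \;=\; \sum_{A'\in \Theta_{n,d}^{ap}} d_{A'}\,\{A'\}_d^{\mbf a}, \qquad d_{A'}\in\mbb Z[v,v^{-1}].
\end{equation*}
On the other hand, expanding $\mak N=\sum_\mak a c_\mak a \{\mak a\}_d$ and applying $\jmath_{n,d}$, Proposition \ref{sum canonical} gives
\begin{equation*}
\jmath_{n,d}(\mak N) \;=\; \sum_{\mak a,\,A'} c_\mak a\, e_\mak a^{A'}\,\{A'\}_d^{\mbf a}, \qquad e_\mak a^{A'}\in\mbb N[v,v^{-1}].
\end{equation*}
Matching coefficients on non-aperiodic $A'$ yields the linear relations $\sum_\mak a c_\mak a\, e_\mak a^{A'}=0$ for every $A'\in\Theta_{n,d}\setminus\Theta_{n,d}^{ap}$.

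To conclude, I would run a minimality argument. Assume for contradiction that $c_{\mak a_0}\neq 0$ for some $\mak a_0=(A_0,\alpha_0)$ with $A_0$ non-aperiodic, and choose such $\mak a_0$ maximal in the partial order $\preceq$ on $\Xi_\mak d$. The crucial input, which is the main obstacle, is a unitriangularity statement: $\jmath_{n,d}(\{\mak a\}_d)=\{A\}_d^{\mbf a} + (\text{terms strictly lower in the Bruhat order on }\Theta_{n,d})$ for $\mak a=(A,\alpha)$. This is analogous to the compatibility between the canonical bases under $\jmath_{n,d}$ used in \cite[\S 5.4]{FLLLW20} and should follow from tracking the orbit geometry of the maps $\pi^\natural,\pi''$ together with the fact that the standard basis element $[\mak a]$ has the same underlying matrix content as $[A]^{\mbf a}$. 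Granting this, the coefficient of $\{A_0\}_d^{\mbf a}$ in $\sum_\mak a c_\mak a\,\jmath_{n,d}(\{\mak a\}_d)$ receives a contribution $c_{\mak a_0}\neq 0$ from $\mak a_0$ that cannot be cancelled by any other $\mak b$ in the support (by the maximality of $\mak a_0$ and positivity of the $e_\mak b^{A_0}$), forcing $d_{A_0}\neq 0$ for the non-aperiodic matrix $A_0$. This contradicts the first displayed equation and closes the argument.
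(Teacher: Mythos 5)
Your overall strategy --- transferring the problem to affine type $\mrm A$ through $\jmath_{n,d}$, invoking Lusztig's aperiodicity there, and using the positivity of Proposition \ref{sum canonical} --- is exactly the route the paper intends (its proof is a one-line citation of the analogous argument for \cite[Proposition 5.4.2]{FLLLW20}). However, the execution has concrete problems. First, a domain issue: $\jmath_{n,d}$ is defined only on $\wit{\mbf S}_{n,d}^\mak d$, which is spanned by the sums $[A^\pm]$; neither $\mak N=\mbf J_+[B_1^\pm]\ast\cdots\ast[B_m^\pm]$ nor an individual $\{\mak a\}_d=\{(A,\alpha)\}_d$ lies in it. You must first strip off $\mbf J_+$ and work with $\wit{\mak N}=[B_1^\pm]\ast\cdots\ast[B_m^\pm]$ and the basis $\{A^\pm\}_d$ of $\wit{\mbf S}_{n,d}^\mak d$; this is harmless because multiplying by $\mbf J_\pm$ does not change the underlying matrices, hence not the aperiodicity, of the terms in the expansion. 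Second, your ``crucial input'' $\jmath_{n,d}(\{\mak a\}_d)=\{A\}_d^{\mbf a}+\text{lower terms}$ is not even well-formed: a matrix $A\in\Xi_{n,d}$ satisfies $a_{ij}=a_{1-i,1-j}$ and has entry sum $2d$ over a period of rows, so it is not an element of $\Theta_{n,d}$ and $\{A\}_d^{\mbf a}$ has no meaning. The correct statement must involve a folded matrix $A^\natural\in\Theta_{n,d}$ (reflecting the restriction of flags to the Lagrangian $V''$), together with the verification that $A^\natural$ is aperiodic precisely when $A$ is; that is the actual content of the reduction, not a routine bookkeeping step.

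Third, and most importantly, your maximality argument does not close the cancellation gap. You take $\mak a_0$ maximal among the non-aperiodic matrices in the support, but an aperiodic $\mak b$ with $\mak b\succ\mak a_0$ may also lie in the support with $c_\mak b$ having negative coefficients, and the positivity of $e_\mak b^{A_0}$ then works against you: the term $c_\mak b\, e_\mak b^{A_0}$ can perfectly well cancel $c_{\mak a_0}$. The observation that makes the whole cancellation issue evaporate --- and which is the real engine of the argument in \cite{FLLLW20} --- is that $c_\mak a\in\mbb N[v,v^{-1}]$ automatically: each factor $[\mak b_i]$ of an aperiodic monomial corresponds to a closed orbit, hence equals the canonical basis element $\{\mak b_i\}_d$, so the $c_\mak a$ are iterated structure constants of the canonical basis of $\mbf S_{n,d}^\mak d$, which the paper has already shown to lie in $\mbb N[v,v^{-1}]$. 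With all coefficients non-negative on both sides of $\jmath_{n,d}(\wit{\mak N})=\sum_A c_A\,\jmath_{n,d}(\{A^\pm\}_d)$ there can be no cancellation, and the proof reduces to the single remaining claim that for non-aperiodic $A$ the image $\jmath_{n,d}(\{A^\pm\}_d)$ contains at least one non-aperiodic type $\mrm A$ canonical basis element with nonzero coefficient --- which is where the leading-term analysis of $\jmath_{n,d}$ is genuinely needed. As written, your proposal leaves both this claim and the cancellation control unestablished.
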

From Lemma \ref{chen121212} and Proposition \ref{chen1213}, we get the following theorem.
\begin{theorem}\label{1222}
The set $\{\{\mak a\}_d \mid \mak a \in \Xi_\mak d^{ap}\}$ forms a basis of $\mbf U_{n,d}^\mak d$, called the canonical basis.
Also, the set $\{\zeta_\mak a \mid \mak a \in \Xi_\mak d^{ap}\}$ forms a basis (called a monomial basis) of $\mbf U_{n,d}^\mak d$.
\end{theorem}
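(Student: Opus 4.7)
The plan is to combine Lemma \ref{chen121212} and Proposition \ref{chen1213} into a triangularity argument, from which both basis statements will follow simultaneously. First, I would observe that each $\zeta_\mak a=[\mak b_1]\ast\cdots\ast[\mak b_m]$ is built from generators of $\mbf U_{n,d}^\mak d$, so $\zeta_\mak a\in\mbf U_{n,d}^\mak d$ for every $\mak a\in\Xi_\mak d^{ap}$. By Lemma \ref{chen121212}, $\zeta_\mak a=[\mak a]+\sum_{\mak b\prec\mak a}c_{\mak a,\mak b}[\mak b]$, so the family $\{\zeta_\mak a\mid\mak a\in\Xi_\mak d^{ap}\}$ is unitriangular with respect to the standard basis of $\mbf S_{n,d}^\mak d$, hence $\mbb Q(v)$-linearly independent.

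Next I would transfer the triangularity from the standard basis to the canonical basis. Since $\zeta_\mak a\in\mbf U_{n,d}^\mak d$ is an aperiodic monomial, Proposition \ref{chen1213} gives an expansion $\zeta_\mak a=\sum_{\mak c\in\Xi_\mak d^{ap}}d_{\mak a,\mak c}\{\mak c\}_d$ in the canonical basis of $\mbf S_{n,d}^\mak d$, with only aperiodic indices appearing. Expanding $\{\mak c\}_d=[\mak c]+\sum_{\mak b\prec\mak c}P_{\mak b,\mak c}[\mak b]$ and comparing with $\zeta_\mak a=[\mak a]+\text{lower terms}$, the leading term on the right must equal $[\mak a]$, so $d_{\mak a,\mak a}=1$ and $d_{\mak a,\mak c}=0$ unless $\mak c\preceq\mak a$. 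Thus
\begin{equation*}
\zeta_\mak a=\{\mak a\}_d+\sum_{\substack{\mak c\in\Xi_\mak d^{ap}\\ \mak c\prec\mak a}}d_{\mak a,\mak c}\{\mak c\}_d,
\end{equation*}
which, by induction on the partial order $\preceq$, shows that every $\{\mak a\}_d$ with $\mak a\in\Xi_\mak d^{ap}$ is a $\mbb Q(v)$-linear combination of the $\zeta_\mak c$, and in particular lies in $\mbf U_{n,d}^\mak d$.

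Finally I would verify that $\{\{\mak a\}_d\mid\mak a\in\Xi_\mak d^{ap}\}$ spans $\mbf U_{n,d}^\mak d$. Since $\mbf U_{n,d}^\mak d$ is generated by the Chevalley-type elements $[\mak b]$ with $B$ or $B-E_\theta^{h,h+1}$ diagonal, every element of $\mbf U_{n,d}^\mak d$ is a $\mbb Q(v)$-linear combination of aperiodic monomials; Proposition \ref{chen1213} applied to each such monomial shows it lies in the span of $\{\{\mak a\}_d\mid\mak a\in\Xi_\mak d^{ap}\}$. Combined with the unitriangular relation above and the linear independence of the aperiodic canonical basis elements (inherited from the linear independence of the full canonical basis of $\mbf S_{n,d}^\mak d$), this proves both families are bases of $\mbf U_{n,d}^\mak d$.

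The main conceptual obstacle is the bookkeeping in the second paragraph: one must confirm that the expansion of an aperiodic monomial given by Proposition \ref{chen1213} is genuinely unitriangular when read against the leading-term data of Lemma \ref{chen121212}. The delicate point is the mild asymmetry between the partial order $\preceq$ (used for leading terms of monomials in the standard basis) and the closure order $\leq$ (used to define the canonical basis); however the text has already noted that $\preceq$ refines the closure order for fixed sign, so the comparison of leading terms is unambiguous, and the induction closes without further ingredients.
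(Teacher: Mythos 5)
Your argument is correct and follows essentially the same route as the paper's own proof: use Lemma \ref{chen121212} to get the unitriangular expansion $\zeta_\mak a=[\mak a]+\text{lower terms}$, use Proposition \ref{chen1213} to confine the canonical-basis expansion of each aperiodic monomial to aperiodic indices, and close by induction on the partial order together with the unitriangularity of the transition matrix. Your write-up is somewhat more explicit than the paper's (in particular about the compatibility of $\preceq$ with the closure order and about why products of generators are aperiodic monomials), but no new idea is involved.
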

\begin{proof}
For $\mak a \in \Xi_\mak d^{ap}$, we have $\zeta_\mak a=[\mak a ]+{\rm lower \ terms}$ by Lemma \ref{chen121212}.
By Proposition \ref{chen1213}, we can write
$\zeta_\mak a \in \{\mak a \}_d+\sum_{\mak a'<\mak a}\mcal A\{\mak a'\}_d$,
where $\mak a' \in \Xi_\mak d^{ap}$.
 By an induction on $\mak a$ by the partial ordering, we conclude that $\{\mak a\}_d \in \mbf U_{n,d}^\mak d$.
 Since the set $\{\{\mak a\}_d \mid \mak a \in \Xi_\mak d^{ap}\}$ is clearly linearly independent and it forms a spanning set of $\mbf U_{n,d}^\mak d$ by Proposition \ref{chen1213}, it is a basis of $\mbf U_{n,d}^\mak d$.

Since the transition matrix from $\{\zeta_\mak a \mid \mak a \in \Xi_\mak d^{ap}\}$ to the canonical basis is uni-triangular, $\{\zeta_\mak a \mid \mak a \in \Xi_\mak d^{ap}\}$ forms a basis as well.
\end{proof}

We have the following corollary of Proposition \ref{sum canonical} and Theorem \ref{1222}.
\begin{corollary}
The set $\{\{A\}_d^{\mak d}\mid A \in \Xi_{n,d}^{ap}\}$ forms a basis (called a canonical basis) of $\widetilde{\mbf U}_{n,d}^\mak d$.
Moreover, the image of $\jmath_{n,d}$ of a canonical basis element in $\wit{\mbf U}_{
n,d}^\mak d$ is a sum of canonical
basis elements of $\mbf U_{n,d}$ with coefficients in $\mbb N [v, v^{-1}]$.
\end{corollary}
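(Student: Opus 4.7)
The plan is to deduce both assertions from Theorem \ref{1222} and Proposition \ref{sum canonical} by exploiting the sign-flip symmetry built into the decomposition $\mbf S_{n,d}^\mak d=\mbf J_+\wit{\mbf S}_{n,d}^\mak d\oplus\mbf J_-\wit{\mbf S}_{n,d}^\mak d$ together with the equality $\{A^\pm\}_d=\{(A,+)\}_d+\{(A,-)\}_d$. The underlying point is that for any $A\in\Xi_{n,d}$ the two ${\rm SO}_F(V)$-orbits $\mcal O_{(A,+)}$ and $\mcal O_{(A,-)}$ are swapped by any $g\in{\rm O}_F(V)\setminus{\rm SO}_F(V)$, so their IC-complexes carry the same data, and the sum of the two $\mbf U_{n,d}^\mak d$-canonical basis elements indexed by these signs is the $\wit{\mbf S}_{n,d}^\mak d$-canonical basis element indexed by $A$.

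The first assertion is established by realising $\{A^\pm\}_d$ for aperiodic $A$ as the leading term of an unsigned aperiodic monomial in $\wit{\mbf U}_{n,d}^\mak d$. Take the monomials $\zeta_{(A,+)}$ and $\zeta_{(A,-)}$ supplied by Lemma \ref{chen121212}; by the recursive construction, both use the same underlying sequence of matrices $B_1,\ldots,B_m$ and differ only in the sign trajectory $(\alpha_1,\ldots,\alpha_m)$ attached to the factors. Using the commutation relations $\mbf J_\pm\mbf T_0=\mbf T_0\mbf J_\mp$, $\mbf J_\pm\mbf T_r=\mbf T_r\mbf J_\mp$ for the parity-flipping Chevalley generators, together with $\mbf J_\pm\mbf E_i=\mbf E_i\mbf J_\pm$, $\mbf J_\pm\mbf F_i=\mbf F_i\mbf J_\pm$ for the parity-preserving ones, one verifies that the unsigned Chevalley product
\[
\mak E_A := [B_1^\pm]\ast[B_2^\pm]\ast\cdots\ast[B_m^\pm]\in\wit{\mbf U}_{n,d}^\mak d
\]
equals $\zeta_{(A,+)}+\zeta_{(A,-)}$ in $\mbf S_{n,d}^\mak d$: each of the two choices of starting parity propagates to a unique compatible sign trajectory through the $B_i$, and the other $2^m-2$ terms vanish by support incompatibility. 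Combined with Lemma \ref{chen121212} this yields
\[
\mak E_A=\{A^\pm\}_d+\sum_{A'\prec A,\ A'\in\Xi_{n,d}^{ap}}c_{A',A}\{A'^\pm\}_d,
\]
and a downward induction on the partial order places each $\{A^\pm\}_d$ with $A$ aperiodic into $\wit{\mbf U}_{n,d}^\mak d$.

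Linear independence is inherited from the canonical basis of $\wit{\mbf S}_{n,d}^\mak d$. For spanning, Proposition \ref{chen1213} applied to the constituent monomials $\zeta_{(A,+)}$ shows that any element of $\mbf U_{n,d}^\mak d$ obtained from an aperiodic monomial expands only into canonical basis elements indexed by aperiodic signed matrices; because $\wit{\mbf U}_{n,d}^\mak d$ is generated by the Chevalley generators (so its elements are sums of unsigned aperiodic monomials $\mak E$) and lies inside $\wit{\mbf S}_{n,d}^\mak d$, the expansion is $\sum_{A'\in\Xi_{n,d}^{ap}}c_{A'}\{A'^\pm\}_d$, completing the first assertion. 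For the second, Proposition \ref{sum canonical} gives $\jmath_{n,d}(\{A^\pm\}_d)=\sum_{A'}c_{A',A}\{A'\}_d^{\mbf a}$ with $c_{A',A}\in\mbb N[v,v^{-1}]$; specialising Proposition \ref{1216} to $d'=0$ shows the image lies in $\mbf U_{n,d}$, and the canonical basis of $\mbf U_{n,d}$ being indexed by aperiodic matrices in $\Theta_{n,d}$ (the affine type $\mrm A$ counterpart of Theorem \ref{1222}, due to \cite{Lu99}) forces $c_{A',A}=0$ unless $A'\in\Theta_{n,d}^{ap}$.

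The main obstacle I anticipate is the identity $\mak E_A=\zeta_{(A,+)}+\zeta_{(A,-)}$. Verifying it amounts to a careful case analysis of how the parity-flipping divided powers $\mbf T_0^{(R)}$ and $\mbf T_r^{(R)}$ interact with the sign recipe in the proof of Lemma \ref{chen121212}—particularly the bookkeeping at the exceptional rows $k\equiv 1\bmod r$. Once that is settled, the remainder of the proof is a routine upper-triangular inversion together with the already-established positivity.
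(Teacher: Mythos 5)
Your proposal is correct and follows the route the paper intends: the paper states this corollary without proof, citing only Theorem \ref{1222} and Proposition \ref{sum canonical}, and your argument is a correct filling-in of exactly those ingredients (Lemma \ref{chen121212} and Proposition \ref{chen1213} for membership and spanning, Proposition \ref{sum canonical} plus Lusztig's aperiodicity theorem for the positivity statement). The one detail you supply beyond the paper --- the identity $\mak E_A=\zeta_{(A,+)}+\zeta_{(A,-)}$, which holds because in any nonvanishing product the sign trajectory is forced by the parity of either endpoint, so only two of the $2^m$ expansion terms survive --- is sound.
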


\section{Corresponding Quantum symmetric pair}
In this section, we introduce the transfer maps $\phi_{d,d-n}^\mak d$ on algebra $\mbf U_{n,d}^\mak d$, and construct (idempotented) quantum algebras $\widetilde{\mbf U}_n^\mak d$ and $\mbf U_n^\mak d$ from the projective systems of algebras $\{(\mbf U_{n,d}^\mak d, \phi_{n,d}^\mak d)\}_{d \in \mbb N}$.
We show that $\wit{\mbf U}_n^\mak d$ is a coideal subalgebra of $\mbf U(\widehat{\mak {sl}}_n)$, and $(\mbf U(\widehat{\mak {sl}}_n), \widetilde{\mbf U}_n^ \mak d)$ forms an affine quantum symmetric pair.

\subsection{The algebras $\mbf U_n^\mak d$ and $\dot {\mbf U}_n^\mak d$}
Recall \cite{Lu00} there exists a homomorphism $\chi_n: \mbf S_{n,n}\rightarrow \mbb Q(v)$ such that
\begin{align*}
\chi_n(\mbf e_i)=\chi_n(\mbf f_i)=0, \  \chi_n(\mbf h_i)=v.
\end{align*}
Following Lusztig \cite{Lu00}, we introduce the transfer map
\begin{align*}
\widetilde{\phi}_{d,d-n}^\mak d: \widetilde{\mbf S}_{n,d}^\mak d \rightarrow \widetilde{\mbf S}_{n,d-n}^\mak d,
\end{align*}
which is by definition the composition of the following homomorphisms $({\rm for} \  d\geq n)$
\begin{align}\label{chen121}
 \widetilde{\phi}_{d,d-n}^\mak d: \widetilde{\mbf S}_{n,d}^\mak d\overset{\widetilde{\Delta}^\mak d}{\longrightarrow}\widetilde{\mbf S}_{n,d-n}^\mak d\otimes \mbf S_{n,n}\overset{1\otimes \chi_n}{\longrightarrow}\widetilde{\mbf S}_{n,d-n}^\mak d.
\end{align}

\begin{proposition}
For $i \in [1,r-1]$, we have $\widetilde{\phi}_{d,d-n}^\mak d(\mbf E_i)=\mbf E_i', \widetilde{\phi}_{d,d-n}^\mak d(\mbf F_i)=\mbf F_i',\widetilde{\phi}_{d,d-n}^\mak d(\mbf K_i)=\mbf K_i'$ and $\widetilde{\phi}_{d,d-n}^\mak d(\mbf T_0)=\mbf T_0',\widetilde{\phi}_{d,d-n}^\mak d(\mbf T_r)=\mbf T_r'$.
\end{proposition}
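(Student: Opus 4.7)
My plan is to unfold the definition of the transfer map $\widetilde{\phi}_{d,d-n}^\mak d = (1\otimes \chi_n)\circ \widetilde{\Delta}^\mak d$ and substitute the explicit comultiplication formulas from Proposition \ref{1000} for each of the generators $\mbf E_i, \mbf F_i, \mbf K_i, \mbf T_0, \mbf T_r$. Specializing the second tensor factor to $\mbf S_{n,n}$ and applying the character $\chi_n$ (which sends every $\mbf e_j''$ and $\mbf f_j''$ to $0$ and each $\mbf h_j''$ to $v$) will kill all but the leading term in each formula, leaving exactly the corresponding primed generator.

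Concretely, I would carry out the following steps in order. First, for $\mbf E_i$ with $i\in[1,r-1]$, Proposition \ref{1000} gives three summands; the second and third have factors $\mbf e_i''$ and $\mbf f_{n-i}''$ respectively, both killed by $\chi_n$, while the first yields $\mbf E_i' \otimes \chi_n(\mbf h_{i+1}''\mbf h_{n-i}''^{-1}) = \mbf E_i' \cdot v\cdot v^{-1} = \mbf E_i'$. The identical argument, swapping the roles of the $\mbf e$'s and $\mbf f$'s, handles $\mbf F_i$. Second, for $\mbf K_i$ there is only one summand, and using $\mbf k_j = \mbf h_{j+1}\mbf h_j^{-1}$ I get $\chi_n(\mbf k_i''\mbf k_{n-i}''^{-1}) = 1$, whence $\widetilde{\phi}_{d,d-n}^\mak d(\mbf K_i) = \mbf K_i'$. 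Third, for $\mbf T_0$ the two non-leading summands contain $\mbf e_0''$ and $\mbf f_0''$ (along with extra scalar prefactors $v^{\pm 1}$ that are irrelevant once these summands vanish), while the leading summand contributes $\mbf T_0' \otimes \chi_n(\mbf k_0'') = \mbf T_0' \cdot (v\cdot v^{-1}) = \mbf T_0'$; the treatment of $\mbf T_r$ is the mirror image with index shift $0\leftrightarrow r$.

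The only bookkeeping I need to be slightly careful about is that the $\mbf h_j''$ products appearing in Proposition \ref{1000} really do pair up into a scalar after applying $\chi_n$. For $\mbf E_i$ and $\mbf F_i$ the product is of the form $\mbf h_a\mbf h_b^{-1}$, which becomes $v\cdot v^{-1}=1$; for $\mbf K_i$ the Cartan exponents cancel in pairs for the same reason; for $\mbf T_0, \mbf T_r$ the factor $\mbf k_0''$ or $\mbf k_r''$ sits alone and again contributes $1$. No mismatch arises because $\chi_n(\mbf h_j'')=v$ is independent of $j$, and so there is no obstacle once the formulas in Proposition \ref{1000} are taken as input.

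The proof is therefore essentially a single computation per generator, and the main (minor) obstacle is simply organizing the five cases uniformly and being consistent with the convention $\mbf k_j=\mbf h_{j+1}\mbf h_j^{-1}$ so that $\chi_n(\mbf k_j'')=1$ is transparent in each case. Given the length constraints typical in this paper, I would present the argument compactly as ``the claim follows by applying $1\otimes \chi_n$ to the formulas of Proposition \ref{1000} and using $\chi_n(\mbf e_j'')=\chi_n(\mbf f_j'')=0$, $\chi_n(\mbf h_j'')=v$,'' optionally writing out one case (say $\mbf E_i$ or $\mbf T_0$) explicitly.
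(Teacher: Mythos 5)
Your proof is correct and is exactly the intended argument: the paper states this proposition without proof, and it follows immediately from the definition $\widetilde{\phi}_{d,d-n}^{\mak d}=(1\otimes\chi_n)\circ\widetilde{\Delta}^{\mak d}$ together with the formulas of Proposition \ref{1000}, since $\chi_n$ annihilates every summand containing an $\mbf e_j''$ or $\mbf f_j''$ and sends each balanced Cartan product $\mbf h_a''\mbf h_b''^{-1}$ (hence each $\mbf k_j''$ and $\mbf k_i''\mbf k_{n-i}''^{-1}$) to $1$. Your bookkeeping of the five cases is accurate.
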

Recall that $\mbf S_{n,d}^\mak d=\mbf J_+\wit{\mbf S}_{n,d}^\mak d\oplus \mbf J_-\wit{\mbf S}_{n,d}^\mak d$, we define the algebra homomorphism
\begin{align}\label{chen1214}
\phi_{n,d}^\mak d: \mbf S_{n,d}^\mak d\rightarrow\mbf S_{n,d-n}^\mak d, \ \   \phi_{n,d}^\mak d(\mbf J_{\pm}x)\mapsto \mbf J'_{\pm}\widetilde{\phi}_{n,d}^\mak d(x), \  \forall x \in \widetilde{\mbf S}_{n,d}^\mak d.
\end{align}
Now we consider the projective system $\{(\mbf U_{n,d}^\mak d,\phi_{d+n,d}^\mak d)\}_{d\in \mbb N}$ and its projective limit:
\begin{align*}
\mbf U_{n,\infty}^\mak d:=\underset{d}{\underset{\longleftarrow}{\rm lim}} \mbf U_{n,d}^\mak d=\Big\{x\equiv (x_d)_{d\in \mbb N}\in \prod_{d \in \mbb N}\mbf U_{n,d}^\mak d \
\big| \ \phi_{d,d-n}^\mak d(x_d)=x_{d-n}, \forall d\Big\}.
 \end{align*}
 Denote by $\phi_d^\mak d:\mbf U_{n,\infty}^\mak d\rightarrow \mbf U_{n,d}^\mak d$ the natural projection.
 Since the bar involution commutes with the transfer map $\phi_{d,d-n}^\mak d$,
 there exists a bar involution $\bar{\quad}:\mbf U_{n,\infty}^\mak d \rightarrow \mbf U_{n,\infty}^\mak d$.
 Also, we have an integral version: $\mbf U_{n,\infty;\mcal A}^\mak d=
 \underset{d}{\underset{\longleftarrow}{\rm lim}}\mbf U_{n,d;\mcal A}^\mak d$.
Since $\mbb Q(v)\otimes_\mcal A \mbf U_{n,d;\mcal A}^\mak d
=\mbf U_{n,d}^\mak d$ for all $d$, we have $\mbb Q(v)\otimes_\mcal A\mbf U_{n,\infty;\mcal A}^\mak d =\mbf U_{n,\infty}^\mak d$.
Similarly, we get the projective limit $\wit{\mbf U}_{n,\infty}^\mak d=\underset{d}{\underset{\longleftarrow}{\rm lim}}\wit{\mbf U}_{n,d}^\mak d$ of the projective system
$\{(\wit{\mbf U}_{n,d}^\mak d,\wit{\phi}_{d+n,d}^\mak d)\}_{d\in \mbb N}$ and denote by $\wit{\phi}_d^\mak d: \wit{\mbf U}_{n,\infty}^\mak d\rightarrow \wit{\mbf U}_{n,d}^\mak d$ the natural projection.

We define elements $\mbf E_i, \mbf F_i,\mbf K_i^{\pm1}$ and $\mbf T_0, \mbf T_r, \mbf J_{\pm}$ for $i \in [1,r-1]$ in $\mbf U_{n,\infty}^\mak d$ by
\begin{align*}
  (\mbf E_i)_d&=\mbf E_{i,d}, \ (\mbf F_i)_d=\mbf F_{i,d}, \ (\mbf K_i^{\pm1})_d=\mbf K_{i,d}^{\pm1}, \\
  (\mbf T_0)_d&=\mbf T_{0,d}, \ (\mbf T_r)_d=\mbf T_{r,d}, \ (\mbf J_{\pm})_d=\mbf J_{\pm,d},
\end{align*}
where the $d \in \mbb N$ in the subscript of $\mbf E_{i,d}$ etc. indicates $\mbf E_{i,d}$ is a copy of the Chevalley generator $\mbf E_i$ in $\mbf U_{n,d}^\mak d$.
Let $\widetilde{\mbf U}_n^\mak d$ be the subalgebra of $\mbf U_{n,\infty}^\mak d$ generated by (the Chevalley generators) $\mbf E_i,\mbf F_i,\mbf K_i^{\pm1}$ and $\mbf T_0, \mbf T_r$ for $i\in [1,r-1]$.
Also, denote $\mbf U_n^\mak d$ to be the subalgebra of $\mbf U_{n,\infty}^\mak d$ generated by $\wit{\mbf U}_n^\mak d$ and $\mbf J_\pm$.

Let
\begin{align*}
 \mbb Z_n^\mak d=\{\lambda=(\lambda_i)_{i\in \mbb Z} \mid \lambda_i \in \mbb Z, \lambda_i=\lambda_{i+n},\lambda_i=\lambda_{1-i}\}.
\end{align*}
Let $|\lambda|=\lambda_1+\cdots+\lambda_n$.
Define an equivalence relation $\approx$ on $\mbb Z_n^\mak d$ by letting $\lambda \approx \mu$ if and only if $\lambda-\mu=(\cdots,p,p,p,\cdots)$, for some even integer $p$.
Let $\mbb Z_n^\mak d/\approx$ be the set of equivalence classes with respect to the equivalence classes with respect to the equivalence relation $\approx$;
and let $\widehat{\lambda}$ be the equivalence class of $\lambda$.

Fix $\widehat{\lambda}\in \mbb Z_n^\mak d/\approx$, we define the element $1_{\widehat{\lambda}} \in  \mbf U_{n,\infty}^\mak d$ as follows. $(1_{\widehat{\lambda}})_d=0$ if $d \not\equiv |\lambda| \ ({\rm mod} \ 2n)$.
If $d=|\lambda|+pn$ for some even integer $p$, we have $(1_{\widehat{\lambda}})_d=1_{\lambda+pI}$, where $1_{\lambda}=[D_\lambda^\pm]$ and $D_{\lambda}$ is diagonal matrix whose diagonal is $\lambda$.
Here $\lambda+pI$ is understood as $\lambda+(\cdots,p,p,p,\cdots)$,
and $1_{\lambda+pI} \in \mbf U_{n,d}^\mak d$ is understood to be zero if there is a negative entry in $\lambda+pI$.

\begin{definition}
Let $\dot{\mbf U}_n^\mak d$ be the $\mbf U_n^\mak d$-bimodule in $\mbf U_{n,\infty}^\mak d$ generated by $1_{\widehat{\lambda}}$ for all $\widehat{\lambda}\in \mbb Z_n^\mak d/\approx$.
\end{definition}

Similarly, we have $\mbb Q(v)\otimes_\mcal A\dot{\mbf U}_{n;\mcal A}^\mak d
=\dot{\mbf U}_n^\mak d$, where $\dot{\mbf U}_{n;\mcal A}^\mak d$ is the $\mcal A$-subalgebra of $\mbf U_{n,\infty}^\mak d$ such that generated by $\mbf E_i^{(a)}1_{\widehat{\lambda}}, \mbf F_i^{(a)}1_{\widehat{\lambda}}$ and $\mbf T_0^{(a)}1_{\widehat{\lambda}}, \mbf T_r^{(a)}1_{\widehat{\lambda}}, \mbf J_{\pm}1_{\widehat{\lambda}}$, for $i\in [1,r-1]$ and $a \in \mbb N$.

We set
\begin{align*}
&\widetilde{\Xi}_n=\{A=(a_{ij}) \in {\rm Mat}_{\mbb Z\times\mbb Z}(\mbb Z)\mid a_{ij}\geq 0 \ (i\neq j), a_{ij}=a_{1-i,1-j}=a_{i+n,j+n}, \forall i,j\in \mbb Z\}, \\
&\widetilde{\Xi}_{n,\mak d}=\widetilde{\Xi}_n \times \{+\}\sqcup \widetilde{\Xi}_n\times \{-\},\\
&\widetilde{\Xi}_{n,\mak d}^{ap}=\{(A,\alpha)\in \widetilde{\Xi}_{n,\mak d}\mid (A, \alpha) \ {\rm is \ aperiodic}\}.
\end{align*}
For $\mak a=(A, \alpha) \in \widetilde{\Xi}_{n,\mak d}$, we shall denote by
$$|\mak a|=d,$$
if $\sum_{i=i_0+1}^{i_0+n}\sum_{j\in \mbb Z}a_{ij}=2d$ for some (or each) $i_0 \in \mbb Z$.
 We set, for $d \in \mbb Z$,
\begin{align*}
\widetilde{\Xi}_{n,\mak d}^d=\{\mak a \in \widetilde{\Xi}_{n,\mak d}\mid |\mak a|=d\},  \ \   \widetilde{\Xi}_{n,\mak d}=\sqcup_d \widetilde{\Xi}_{n,\mak d}^d.
\end{align*}
Clearly, we have that $\Xi_\mak d\subset \widetilde{\Xi}_{n,\mak d}^d$.

We define an equivalence relation $\approx$ on $\widetilde{\Xi}_{n,\mak d}^{ap}$ by
\begin{align*}
\mak a=(A,\alpha)\approx\mak b=(B,\alpha') \  {\rm iff} \ \alpha=\alpha' \ {\rm and}\ A-B=pI, {\rm for \ some \ even \ integer} \ p,
\end{align*}
where $I=\sum_{1\leq i\leq r}E_{\theta}^{ii}$,
and let $\widehat{\mak a}$ be the equivalence class of $\mak a$.
We define
${\rm ro}(\widehat{\mak a})=\widehat{{\rm ro}(\mak a)} \ {\rm and}\  {\rm co}(\widehat{\mak a})=\widehat{{\rm co}(\mak a)}$,
and they are elements in $\mbb Z_n^\mak d/\approx$.
We can define the element $\zeta_{\widehat {\mak a}}$ in $\dot{\mbf U}_n^\mak d$ by $(\zeta_{\widehat{\mak a}})_d=0$ unless $d=|\mak a | \ {\rm mod}\ 2n$, and if $|\mak a|=d+p/2n$ for some even integer $p$, $(\zeta_{\widehat{\mak a}})_d=\zeta_{\mak a+pI}$, where $\zeta_{\mak a+pI}$ is the monomial basis attached to $\mak a+pI$ in Theorem \ref{1222}.
Since $\phi_{d,d-n}^\mak d(\zeta_{\mak a+pI})=\zeta_{\mak a+(p-2)I}$, we see that $\zeta_{\widehat{\mak a}} \in \dot{\mbf U}_n^\mak d$.

The following linear independence is reduced to the counterpart at the Schur algebra level,
by an argument similar to \cite[Theorem 5.5]{LW15}.
\begin{proposition}
The set $\{\zeta_{\widehat{\mak a}} \mid \widehat{\mak a} \in \widetilde{\Xi}_{n,\mak d}^{ap}/\approx\}$ is linearly independent.
\end{proposition}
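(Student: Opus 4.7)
The plan is to reduce the inverse-limit statement to the finite-level linear independence already contained in Theorem \ref{1222}. Suppose that a finite $\mbb Q(v)$-linear relation
\[
\sum_{i=1}^{k} c_i\,\zeta_{\widehat{\mak a}_i} = 0
\]
holds in $\dot{\mbf U}_n^\mak d$ with pairwise distinct classes $\widehat{\mak a}_1,\dots,\widehat{\mak a}_k \in \wit\Xi_{n,\mak d}^{ap}/\!\approx$. First I would use the weight idempotents: for any $\widehat\lambda,\widehat\mu \in \mbb Z_n^\mak d/\!\approx$ one has $1_{\widehat\lambda}\,\zeta_{\widehat{\mak a}}\,1_{\widehat\mu} = \zeta_{\widehat{\mak a}}$ or $0$ according to whether $({\rm ro}(\widehat{\mak a}),{\rm co}(\widehat{\mak a})) = (\widehat\lambda,\widehat\mu)$ or not, and the central idempotents $\mbf J_{\pm}$ separate the two sign components of $\Xi_\mak d$. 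After multiplying by suitable $\mbf J_{\pm}\,1_{\widehat\lambda}\cdot(-)\cdot 1_{\widehat\mu}$ the problem reduces to the case in which all the $\mak a_i = (A_i,\alpha_i)$ share a common row-sum class $\widehat\lambda$, column-sum class $\widehat\mu$, and sign $\alpha$.

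In this reduced situation all the totals $|\mak a_i|$ are congruent to a common residue modulo $2n$, because replacing $A$ by $A+pI$ with $p$ even changes $|\mak a|$ by $pn$. I would then choose $d$ in this residue class, large enough that for every $i$ the integer $p_i$ determined by the relation $d = |\mak a_i| + p_i n$ is a nonnegative even integer and $\mak a_i + p_i I$ has nonnegative entries; finitely many $\mak a_i$ make this possible. By the very definition of $\zeta_{\widehat{\mak a}_i}$ we then have $(\zeta_{\widehat{\mak a}_i})_d = \zeta_{\mak a_i + p_i I}$, and each $\mak a_i + p_i I$ still lies in $\Xi_\mak d^{ap}$, since aperiodicity concerns only off-diagonal entries and $p_i I$ is diagonal. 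Projecting the relation to $\mbf U_{n,d}^\mak d$ via $\phi_d^\mak d$ yields
\[
\sum_{i=1}^{k} c_i\,\zeta_{\mak a_i + p_i I} = 0 \quad \text{in } \mbf U_{n,d}^\mak d.
\]

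The final step is to check that the elements $\mak a_i + p_i I$ are pairwise distinct in $\Xi_\mak d^{ap}$. If $\mak a_i + p_i I = \mak a_j + p_j I$ for some $i\neq j$, then $\alpha_i = \alpha_j$ (already arranged) and $A_i - A_j = (p_j - p_i) I$ with $p_j - p_i$ even, which is precisely the condition $\mak a_i \approx \mak a_j$, contradicting $\widehat{\mak a}_i \neq \widehat{\mak a}_j$. Consequently the $\zeta_{\mak a_i + p_i I}$ are distinct monomial-basis elements of $\mbf U_{n,d}^\mak d$, and Theorem \ref{1222} forces $c_i = 0$ for all $i$. The only mild obstacle I anticipate is bookkeeping around the simultaneous choice of $d$ together with the parity conventions for $p$ built into the equivalence relation $\approx$; the argument is essentially formal once that is pinned down, in line with the indicated reference \cite[Theorem 5.5]{LW15}.
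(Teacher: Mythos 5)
Your proposal is correct and follows essentially the route the paper intends: the paper's one-line proof merely says the statement ``is reduced to the counterpart at the Schur algebra level'' following \cite[Theorem 5.5]{LW15}, and you carry out exactly that reduction, projecting a putative relation to a single sufficiently large finite level $d$ and invoking the linear independence of the monomial basis $\{\zeta_{\mak a}\}$ from Theorem \ref{1222}, with the distinctness of the $\mak a_i+p_iI$ checked correctly. The only blemish is the bookkeeping you already flagged: adding $pI$ with $p$ even changes $|\mak a|$ by $(p/2)n$ rather than $pn$, so fixing the row and column classes only forces the $|\mak a_i|$ to agree modulo $n$; since components at different $d$ never interact, one simply splits the relation further by residue class, and the argument is unaffected.
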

Next, we show that $\{\zeta_{\widehat{\mak a}}\mid \widehat{\mak a} \in \widetilde{\Xi}_{n,\mak d}^{ap}/\approx\}$ is indeed a basis for $\dot{\mbf U}_n^\mak d$.
For simplicity, we write $\mbf F_r$ (resp. $ \mbf F_n$) for $\mbf T_r $ (resp. $\mbf T_0$) and $\mbf F_{n-i}$ for $\mbf E_i$ for $i \in [1,r-1]$.
For $\lambda \in \Lambda_{n,d}^\mak d$ and a pair $(\mbf i,\mbf a)$, where $\mbf i=(i_1,\cdots, i_s)$ and $\mbf a=(a_1,\cdots,a_s)$ with $1\leq i_j\leq n$ and $a_j \in \mbb N$ for all $j$,
we denote
\begin{align*}
_{d}\mcal D_{\mbf i,\mbf a,\lambda}=
\mbf F_{i_1}^{(a_1)}\cdots\mbf F_{i_s}^{(a_s)}1_{\lambda} \in \mbf U_{n,d}^\mak d,
\end{align*}
Then $\mbf J_{\pm}{_{d}\mcal D_{\mbf i,\mbf a,\lambda}}$ exhaust all possible monomials in $\mbf U_{n,d}^\mak d$.

The same argument as for \cite[Proposition 6.2.3]{FLLLW20} gives us the following proposition.
\begin{proposition}
Fix a triple $(\mbf i,\mbf a,\lambda)$ with $|\lambda|=d$.
There is a finite subset $\mcal I_{\mbf i,\mbf a,\lambda}$ of $\{\mak a \in \widetilde{\Xi}_{n,\mak d}^{ap}\mid  |\mak a|=d\}$ such that
\begin{align*}
_{d+pn}\mcal D_{\mbf i,\mbf a,\lambda+2pI}=\sum_{\mak a \in \mcal I_{\mbf i,\mbf a,\lambda}}c_\mak a\zeta_{_{2p}\mak a}, \ \forall p, \  {\rm where} \ c_{\mak a} \in \mcal A \ {\rm is \ independent \ of} \ p.
\end{align*}
\end{proposition}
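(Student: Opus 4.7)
The plan is to proceed by induction on the length $s$ of the sequence $\mbf i=(i_1,\ldots,i_s)$. The base case $s=0$ is immediate: ${}_{d+pn}\mcal D_{\emptyset,\emptyset,\lambda+2pI}=1_{\lambda+2pI}=\zeta_{\mak a_0+2pI}$, where $\mak a_0$ is the diagonal signed matrix with row vector $\lambda$ and appropriate sign, so one may take $\mcal I_{\emptyset,\emptyset,\lambda}=\{\mak a_0\}$ with $c_{\mak a_0}=1$. For the inductive step, factor
$$
{}_{d+pn}\mcal D_{\mbf i,\mbf a,\lambda+2pI} \;=\; \mbf F_{i_1}^{(a_1)}\cdot {}_{d+pn}\mcal D_{\mbf i',\mbf a',\lambda+2pI},
$$
where $\mbf i'=(i_2,\ldots,i_s)$ and $\mbf a'=(a_2,\ldots,a_s)$. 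By the inductive hypothesis this is a finite sum $\sum_{\mak b\in\mcal I'}c_\mak b\,\mbf F_{i_1}^{(a_1)}\zeta_{\mak b+2pI}$ with $\mcal I'$ finite and $c_\mak b\in\mcal A$ independent of $p$. Thus it suffices to show: for each fixed aperiodic signed matrix $\mak b$ and each divided power $\mbf F_i^{(R)}$, the product $\mbf F_i^{(R)}\zeta_{\mak b+2pI}$ expands as a finite $\mcal A$-linear combination of $\zeta_{\mak c+2pI}$ with $\mak c\in\Xi_\mak d^{ap}$, where both the indexing set and the coefficients are $p$-independent.

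To establish this reduction, recall that $\zeta_\mak b=[\mak b_1]\ast\cdots\ast[\mak b_m]$ is the explicit aperiodic monomial of Lemma \ref{chen121212}, and that the replacement $\mak b\mapsto\mak b+2pI$ modifies only the diagonal parts of each factor. Hence $\mbf F_i^{(R)}\zeta_{\mak b+2pI}$ is a product of $m+1$ Chevalley-type factors, which one rewrites in the monomial basis by iterating Proposition \ref{divided formula} together with the leading-term analysis of Lemmas \ref{ZXY} and \ref{CQY}, and invoking Proposition \ref{chen1213} to ensure the expansion stays in the span of aperiodic canonical basis elements. The essential stability observation is that in Proposition \ref{divided formula} every $v$-exponent and every $q$-integer factor depends only on off-diagonal entries of the matrices involved; diagonal entries enter only through the passive row/column sum bookkeeping, and the shift $+2pI$ preserves all off-diagonal data. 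Consequently, both the set of $\mak c$ that appear (modulo the $+2pI$ shift) and the accompanying coefficients are intrinsic to the original data and independent of $p$. The total set $\mcal I_{\mbf i,\mbf a,\lambda}$ is obtained by taking a union of such finite sets over $\mcal I'$, which is again finite.

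The main obstacle will be controlling the interaction between the sign $\alpha$ (which toggles under multiplication by $\mbf T_0$ or $\mbf T_r$) and the diagonal shift by $2pI$. Since $2p$ is even, the parity conditions defining $\Xi_\mak d^{ap}$ and the partitioning $\mcal X_{n,d}^{\mak d,1}\sqcup\mcal X_{n,d}^{\mak d,2}$ behave uniformly for all $p$, which keeps the sign data coherent across the expansion; this is precisely where it matters that the shift is $2pI$ rather than $pI$. A secondary technical point is the range of $p$: for small $p$ the matrix $\mak c+2pI$ may fail to have nonnegative diagonal, in which case $\zeta_{\mak c+2pI}$ is interpreted as $0$, and the identity then remains valid by this convention, consistent with how the transfer maps $\phi_{d,d-n}^\mak d$ act on the projective system $\{\mbf U_{n,d}^\mak d\}_d$.
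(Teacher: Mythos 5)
Your induction on $s$ and the reduction to showing that $\mbf F_{i}^{(R)}\,\zeta_{\mak b+2pI}$ expands with $p$-independent coefficients in the $\zeta_{\mak c+2pI}$ is a reasonable skeleton (modulo the minor point that $1_{\lambda+2pI}=[D_{\lambda+2pI}^{\pm}]=\zeta_{(D_{\lambda+2pI},+)}+\zeta_{(D_{\lambda+2pI},-)}$ is a sum of \emph{two} monomial basis elements, not one). The genuine gap is the ``essential stability observation'': it is false that in Proposition \ref{divided formula} every $v$-exponent and every $q$-integer factor depends only on off-diagonal entries. In formula \eqref{divided formula 1} the product $\prod_u\overline{[a_{h,u}+t_u,t_u]}$ contains, for $u\equiv h$, the factor $\overline{[a_{h,h}+t_h,t_h]}$, which involves the diagonal entry $a_{h,h}$ (and such terms do occur whenever $a_{h+1,h}\neq 0$, since the sum ranges over all $t$ with $t_u\le a_{h+1,u}$); likewise $\beta(t)=\sum_{j\ge u}a_{h,j}t_u-\sum_{j>u}a_{h+1,j}t_u+\cdots$ contains $a_{h,h}t_u$ and $a_{h+1,h+1}t_u$ for $u\le h$. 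For example $\overline{[a+t,t]}$ with $t=1$ equals $1+v^{-2}+\cdots+v^{-2a}$, which changes when $a\mapsto a+2p$. So the coefficients of the \emph{standard}-basis expansion of these products genuinely depend on $p$, and the asserted $p$-independence in the \emph{monomial} basis is a nontrivial cancellation that your argument never establishes.

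The missing ingredient is the transfer-map compatibility, which is how the cited model (Proposition 6.2.3 of \cite{FLLLW20}) actually works. Since $\phi^{\mak d}_{d,d-n}$ fixes the Chevalley generators and sends $1_{\lambda+2pI}$ to $1_{\lambda+2(p-1)I}$, one has $\phi^{\mak d}_{d+pn,d+(p-1)n}\bigl({}_{d+pn}\mcal D_{\mbf i,\mbf a,\lambda+2pI}\bigr)={}_{d+(p-1)n}\mcal D_{\mbf i,\mbf a,\lambda+2(p-1)I}$, and likewise $\phi^{\mak d}_{d,d-n}(\zeta_{\mak a+2pI})=\zeta_{\mak a+2(p-1)I}$. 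Writing the expansion at level $d+pn$ in the basis $\{\zeta_{\mak c}\}$ (Theorem \ref{1222}, together with Proposition \ref{chen1213} to confine the support to aperiodic matrices), applying $\phi^{\mak d}$ and comparing with the expansion at level $d+(p-1)n$ via linear independence of the nonzero $\zeta$'s forces the coefficients to agree for consecutive $p$; finiteness and $p$-independence of the index set follow because the multiplication formulas bound the off-diagonal entries of all matrices in the support by $\sum_j a_j$, uniformly in $p$. Your proof needs this (or an equivalent stabilization argument) in place of the incorrect off-diagonal claim; your remarks about signs and about $\zeta_{\mak c+2pI}=0$ for small $p$ are fine but peripheral.
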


 Following the argument of \cite[Proposition 6.2.4]{FLLLW20}, which is formal and not reproduced here, we have the following proposition.
\begin{proposition}
The set $\{\zeta_{\widehat{\mak a}}\mid \widehat{\mak a} \in \widetilde{\Xi}_{n,\mak d}^{ap}/\approx\}$ forms a basis for $\dot{\mbf U}_n^\mak d$ and an $\mcal A$-basis for $\dot{\mbf U}_{n;\mcal A}^\mak d$.
\end{proposition}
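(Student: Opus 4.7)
The plan is to reduce to a spanning statement, since the preceding proposition has already established linear independence of $\{\zeta_{\widehat{\mak a}}\}$. First I would observe that, by the very definition of $\dot{\mbf U}_n^\mak d$ as the $\mbf U_n^\mak d$-bimodule generated by the idempotents $\{1_{\widehat \lambda}\}$, together with the fact (just recorded above) that the monomials $\mbf J_\pm \cdot {}_d \mcal D_{\mbf i,\mbf a,\lambda}$ exhaust all monomials in each $\mbf U_{n,d}^\mak d$, every element of $\dot{\mbf U}_n^\mak d$ (respectively $\dot{\mbf U}_{n;\mcal A}^\mak d$ when divided powers of the Chevalley generators are used throughout) can be written as a $\mbb Q(v)$-linear (resp.\ $\mcal A$-linear) combination of such monomials, viewed at each level as a coherent sequence under the transfer maps $\phi_{d,d-n}^\mak d$.

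Second, I would invoke the immediately preceding proposition to rewrite each such monomial in terms of the monomial basis at the Schur-algebra level: there is a finite subset $\mcal I_{\mbf i,\mbf a,\lambda} \subset \widetilde{\Xi}_{n,\mak d}^{ap}$ with $|\mak a|=|\lambda|$ and constants $c_\mak a \in \mcal A$, all independent of $p$, such that
\begin{equation*}
{}_{d+pn}\mcal D_{\mbf i,\mbf a,\lambda+2pI} \;=\; \sum_{\mak a \in \mcal I_{\mbf i,\mbf a,\lambda}} c_\mak a \, \zeta_{{}_{2p}\mak a}.
\end{equation*}
Since $(\zeta_{\widehat{\mak a}})_{d+pn} = \zeta_{\mak a+2pI}$ and the transfer maps satisfy $\phi_{d,d-n}^\mak d(\zeta_{\mak a+pI}) = \zeta_{\mak a+(p-2)I}$ (a compatibility already used when $\zeta_{\widehat{\mak a}}$ was introduced), the right-hand sides at varying $p$ assemble coherently in the projective limit into $\sum c_\mak a \, \zeta_{\widehat{\mak a}} \in \dot{\mbf U}_n^\mak d$. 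Premultiplication by $\mbf J_\pm$ only toggles the sign label $\alpha$ of the underlying signed matrices, producing another finite sum of $\zeta_{\widehat{\mak a'}}$'s. Hence the family $\{\zeta_{\widehat{\mak a}}\}$ spans.

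Finally, combining this spanning with the linear independence from the preceding proposition gives that $\{\zeta_{\widehat{\mak a}} \mid \widehat{\mak a} \in \widetilde{\Xi}_{n,\mak d}^{ap}/\!\approx\}$ is an $\mcal A$-basis of $\dot{\mbf U}_{n;\mcal A}^\mak d$, and base change along $\mbb Q(v) \otimes_\mcal A (-)$ yields the $\mbb Q(v)$-basis statement for $\dot{\mbf U}_n^\mak d$. The main substantive obstacle is bookkeeping rather than a deep input: one must carefully match the shift $\lambda+2pI$ in the preceding proposition with the parity condition built into the equivalence relation $\approx$ (which demands the shift in the last slot be an \emph{even} integer $p$), and verify that the finite expansion at each level really defines a single well-defined element of the projective limit. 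Once this compatibility between the $\approx$-equivalence, the transfer maps, and the monomial expansion is checked, the argument becomes formal and parallels \cite[Proposition 6.2.4]{FLLLW20}.
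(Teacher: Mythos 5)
Your argument is correct and follows essentially the route the paper intends (the paper simply defers to the ``formal'' argument of \cite[Proposition 6.2.4]{FLLLW20}): reduce to spanning via the monomials $\mbf J_\pm\,{}_d\mcal D_{\mbf i,\mbf a,\lambda}$, expand them using the preceding proposition with $p$-independent coefficients, assemble in the projective limit, and combine with the already-established linear independence. The only imprecision is your description of $\mbf J_\pm$ as ``toggling'' the sign label --- these are orthogonal idempotent projections, so $\mbf J_\pm$ applied to an aperiodic monomial either preserves it or annihilates it --- but this does not affect the conclusion that the span is stable under multiplication by $\mbf J_\pm$.
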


\subsection{Bilinear form on $\dot{\mbf U}_n^\mak d$}

Imitating MaGerty \cite{Mc12} in affine type $\mrm A$,
we define a bilinear form $\langle \cdot,\cdot \rangle_d$ on $\mbf S_{n,d}^\mak d$ as follows
\begin{align*}
\langle [\mak a],[\mak a']\rangle_d=\delta_{[\mak a],[\mak a']}
v^{-2d(\mak a^t)}\sharp X_{\mak a^t}^{L'},
\end{align*}
where $L' \in \mcal X_{n,d}^\mak d({\rm ro}(\mak a^t))$.
With the help of the identity \eqref{Dimen eq},
the same argument as in \cite[Proposition 3.2]{Mc12} gives us the following.
\begin{proposition} \label{adjoint}
We have $\langle[\mak a]\ast[\mak b],[\mak c]\rangle_d=
\langle[\mak b],v^{d(\mak a)-
d(\mak a^t)}[\mak a^t]\ast[\mak c]\rangle_d$.
\end{proposition}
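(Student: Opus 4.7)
The plan is to follow the standard adjointness argument due to McGerty in affine type $\mrm A$ (\cite[Proposition 3.2]{Mc12}), with the normalizations carefully adapted to the affine type $\mrm D$ setting. Since the bilinear form is diagonal in the basis $\{[\mak a]\}_{\mak a\in\Xi_\mak d}$, it suffices to compare the structure constants on the two sides of the claimed identity.

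First I would rewrite everything in the unnormalized basis via $[\mak a]=v^{-d(\mak a)}e_\mak a$ and exploit the point-wise identity $e_\mak a(L,L')=e_{\mak a^t}(L',L)$. This identity is immediate from Proposition \ref{isomorphism}, the definition of $\mak a^t$, and the sign-swap bijections \eqref{sign}: swapping the two flags sends the $\mrm{SO}_F(V)$-orbit labelled by $(A,\alpha)$ to the one labelled by $(A^t,\alpha')$ with exactly the twist recorded in the definition of $\mak a^t$. The elementary $L^2$-adjointness on $\mcal X_{n,d}^\mak d\times\mcal X_{n,d}^\mak d$,
\begin{equation*}
\sum_{L,L'}(e_\mak a\ast e_\mak b)(L,L')\,e_\mak c(L,L')=\sum_{L,L'}e_\mak b(L,L')\,(e_{\mak a^t}\ast e_\mak c)(L,L'),
\end{equation*}
then follows by rearranging the triple sum over $L,L',L''$, and yields the structure-constant relation $g_{\mak a,\mak b,\mak c;q}\,|\mcal O_\mak c|=g_{\mak a^t,\mak c,\mak b;q}\,|\mcal O_\mak b|$.

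Next I would convert this back into the form $\langle\cdot,\cdot\rangle_d$ via the identity $|\mcal O_\mak a|=\sharp X_\mak a^L\cdot|\mcal X_{n,d}^\mak d({\rm ro}(\mak a))|$ and the definition of the form. Both sides of the proposition vanish unless the composition constraints ${\rm ro}(\mak a)={\rm ro}(\mak c)$, ${\rm co}(\mak a)={\rm ro}(\mak b)$, ${\rm co}(\mak b)={\rm co}(\mak c)$ hold, so in the nontrivial range the factors $|\mcal X_{n,d}^\mak d({\rm co}(\cdot))|$ match and cancel. All the remaining content is then in the $v$-powers, and after unpacking the proposition reduces to the single combinatorial identity
\begin{equation*}
d(\mak c)-d(\mak c^t)-d(\mak b)+d(\mak b^t)=d(\mak a)-d(\mak a^t)
\end{equation*}
for all $\mak a,\mak b,\mak c$ with $g_{\mak a,\mak b,\mak c;q}\neq 0$. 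Using the explicit formula for $d(\mak a)-d(\mak a^t)$ recorded in the Remark together with the matching constraints above, both sides reduce to the same quadratic expression in the entries of $A$, $B$, $C$.

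The main obstacle will be the final bookkeeping step. The dimension formula \eqref{Dimen eq} in affine type $\mrm D$ carries the additional corrections $-\sum_{i\geq 1>j}a_{ij}-\sum_{i\geq r+1>j}a_{ij}$ coming from the $\theta$-involution fixing the two special vertices $0$ and $r$, and one must check directly that these corrections obey the same additive law under transposition and convolution as the type $\mrm A$ part. The computation is mechanical once the right form of $d(\mak a)-d(\mak a^t)$ is in hand, but requires care with the double periodicity $a_{ij}=a_{i+n,j+n}=a_{1-i,1-j}$; it is for this reason that the proposition is singled out as a separate statement rather than imported verbatim from \cite{Mc12}.
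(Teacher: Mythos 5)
Your proposal is correct and follows essentially the same route as the paper, which simply defers to McGerty's Proposition 3.2: the structure-constant symmetry obtained by reordering the triple sum over flags, combined with the observation (the content of the Remark) that $d(\mak a)-d(\mak a^t)$ depends only on ${\rm ro}(\mak a)$ and ${\rm co}(\mak a)$, so that the required exponent identity telescopes along the constraints ${\rm ro}(\mak c)={\rm ro}(\mak a)$, ${\rm co}(\mak a)={\rm ro}(\mak b)$, ${\rm co}(\mak b)={\rm co}(\mak c)$. The one adjustment you should make is that $\mcal X_{n,d}^\mak d$ is an infinite set in the affine setting, so the unrestricted sums $\sum_{L,L'}$ and the orbit cardinalities $|\mcal O_\mak c|$, $|\mcal O_\mak b|$, $|\mcal X_{n,d}^\mak d({\rm ro}(\mak a))|$ all diverge; fixing the flag $L'$ from the outset and summing over the finite fibers gives directly the finite identity $g_{\mak a,\mak b,\mak c;q}\,\sharp X_{\mak c^t}^{L'}=g_{\mak a^t,\mak c,\mak b;q}\,\sharp X_{\mak b^t}^{L'}$, which is exactly the form the definition of $\langle\cdot,\cdot\rangle_d$ via $\sharp X_{\mak a^t}^{L'}$ requires.
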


\begin{corollary}
For all $i \in [1,r-1]$, we have the following:
\begin{align*}
\langle \mbf E_i[\mak a_1], [\mak a_2]\rangle_d&=\langle[\mak a_1], v\mbf K_i\mbf F_i[\mak a_2]\rangle_d,  & \ \ \langle \mbf F_i[\mak a_1], [\mak a_2]\rangle_d&=\langle[\mak a_1], v^{-1}\mbf E_i\mbf K_i^{-1}[\mak a_2]\rangle_d, \\
\langle \mbf T_0[\mak a_1], [\mak a_2]\rangle_d&=\langle[\mak a_1], \mbf T_0[\mak a_2]\rangle_d, & \ \  \langle \mbf T_r[\mak a_1], [\mak a_2]\rangle_d&=\langle[\mak a_1], \mbf T_r[\mak a_2]\rangle_d,  \\
\langle \mbf J_{\pm}[\mak a_1], [\mak a_2]\rangle_d&=\langle[\mak a_1], \mbf J_\pm[\mak a_2]\rangle_d, &  \ \  \langle \mbf K_i[\mak a_1], [\mak a_2]\rangle_d&=\langle[\mak a_1], \mbf K_i[\mak a_2]\rangle_d, & \ &\
\end{align*}
\end{corollary}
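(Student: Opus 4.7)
The plan is to deduce every identity from Proposition \ref{adjoint} by expanding each generator $\mbf X$ as $\mbf X=\sum_{\mak a}c_{\mak a}[\mak a]$ in the standard basis and computing term-by-term. Proposition \ref{adjoint} then gives
\[
\langle \mbf X[\mak a_1],[\mak a_2]\rangle_d=\bigg\langle [\mak a_1],\Big(\sum_{\mak a}c_{\mak a}v^{d(\mak a)-d(\mak a^t)}[\mak a^t]\Big)\ast[\mak a_2]\bigg\rangle_d,
\]
so the task reduces to identifying the element in parentheses with the claimed adjoint. The two ingredients we need per generator are the combinatorial shape of $\mak a^t$ (matrix transpose plus the sign rule) and the dimension defect $d(\mak a)-d(\mak a^t)$ from the remark after Lemma \ref{matric}.

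The diagonal cases are immediate. For $\mbf K_i$ and $\mbf J_\pm$ the contributing $\mak a$ are diagonal signed matrices, so $\mak a^t=\mak a$ and the defect vanishes, giving self-adjointness. For $\mbf T_0$ and $\mbf T_r$, the contributing $\mak a=(B,\alpha)$ satisfies $B-E_\theta^{0,1}$ or $B-E_\theta^{r,r+1}$ diagonal; since $(E_\theta^{0,1})^t=E_\theta^{0,1}$ and $(E_\theta^{r,r+1})^t=E_\theta^{r,r+1}$, one has $B^t=B$ and the defect is $0$. The sign rule gives $\mak a^t=(B,-\alpha)$, but the coefficient $c_{\mak a}$ depends only on $B$ (via the bijections \eqref{sign}), so $\sum_{\mak a}c_{\mak a}[\mak a^t]=\sum_{\mak a}c_{\mak a}[\mak a]$ and the adjoint is itself.

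The substantive case is $\mbf E_i$ and $\mbf F_i$ for $i\in[1,r-1]$. The function $\mbf E_i$ is supported on orbits with signed matrices $(B,\alpha)$ such that $B-E_\theta^{i,i+1}$ is diagonal and $\mrm{sup}(\mak a)\in\{(1,1),(2,2)\}$. Transposing swaps $E_\theta^{i,i+1}\leftrightarrow E_\theta^{i+1,i}$ and preserves the sign, so $\mak a^t$ lies in the support of $\mbf F_i$. I will combine three sources of $v$-factors: the normalization $[\mak a]=v^{-d(\mak a)}e_{\mak a}$, the defining weight $v^{-|L'_{i+1}/L'_i|}$ in the formula for $\mbf E_i$, and the factor $v^{d(\mak a)-d(\mak a^t)}$ from Proposition \ref{adjoint}. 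After re-expressing everything back in terms of $[\mak a^t]$, the net excess weight on $[\mak a_2]$ is $v^{1+|L'_{i+1}/L'_i|-|L'_i/L'_{i-1}|}$, which is exactly the scalar by which $v\mbf K_i=v\mbf H_{i+1}\mbf H_i^{-1}$ acts when placed to the left of an $\mbf F_i$-type matrix. This proves $\mbf E_i^\dagger=v\mbf K_i\mbf F_i$. The identity for $\mbf F_i$ follows from an identical computation, or alternatively from the fact that $\Psi(\mbf E_i)=v^{?}\mbf K_i^{?}\mbf F_i$ under the anti-automorphism of the remark, together with adjointness of $\Psi$ for $\langle\cdot,\cdot\rangle_d$.

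The main obstacle is the $v$-bookkeeping in the last paragraph: one must verify that the dimension defect $d(\mak a)-d(\mak a^t)$ for matrices of the $E_\theta^{i,i+1}$-shape, when combined with the two normalization factors, produces exactly the power of $v$ recorded by $v\mbf K_i$. This is a direct analogue of the computations in \cite[Prop.~3.2]{Mc12} for affine type $\mrm A$ and in the finite-type $\mrm D$ setting of \cite{FL15}; the only new phenomenon here are the generators $\mbf T_0,\mbf T_r$, and for those the matrices are self-transpose and hence contribute no $v$-factor, bypassing any new difficulty.
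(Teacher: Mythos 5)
Your proposal is correct and follows essentially the same route as the paper: reduce everything to Proposition \ref{adjoint}, observe that transposition fixes the supports of $\mbf K_i,\mbf J_\pm,\mbf T_0,\mbf T_r$ (up to the sign flip, which is absorbed because the coefficients depend only on the underlying matrix) and interchanges the supports of $\mbf E_i$ and $\mbf F_i$, then match the defect $v^{d(\mak a)-d(\mak a^t)}$ with the scalar by which $v\mbf K_i^{\pm 1}$ acts. The one step you defer --- the ``$v$-bookkeeping'' --- is precisely what the paper records, namely $d(\mak a)={\rm co}(\mak a)_{i+1}$ and $d(\mak a^t)={\rm co}(\mak a)_i-1$ for an $\mbf E_i$-type matrix (whose shape is $A-E_\theta^{i+1,i}$ diagonal, not $A-E_\theta^{i,i+1}$ as you wrote, though this labeling does not affect your argument), giving exactly the exponent $1+{\rm co}(\mak a)_{i+1}-{\rm co}(\mak a)_i$ you predict.
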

\begin{proof}
The cases for $\mbf K_i$ and $\mbf J_\pm$ is clear.

If $\mak a=(A,\alpha)$ and $A-E_{\theta}^{i+1,i}$ is diagonal for some $i\in [1,r-1]$.
Thus, we have
\begin{align*}
\mak a^t=(A^t, \alpha), \ d(\mak a)={\rm co}(\mak a)_{i+1} \ \  {\rm and} \ \ d(\mak a^t)={\rm ro}(\mak a)_i={\rm co}(\mak a)_i-1.
\end{align*}
Hence, $d(\mak a)-d(\mak a^t)={\rm co}(\mak a)_{i+1}-{\rm co}(\mak a)_i+1$.
Moreover, we have
\begin{align*}
v\mbf K_i[\mak a^t]\ast[\mak a_2]=v^{1+{\rm co}(\mak a)_{i+1}-{\rm co}(\mak a)_i}
[\mak a^t]\ast[\mak a_2],
\end{align*}
which implies the case for $\mbf E_i$.

We now prove the case  $\mbf F_i$.
If $\mak a=(A,\alpha)$ and $A-E_{\theta}^{i,i+1}$ is diagonal for some $i\in [1,r-1]$.
Thus, we have
\begin{align*}
\mak a^t=(A^t, \alpha), \ d(\mak a)={\rm co}(\mak a)_i \ \  {\rm and} \ \ d(\mak a^t)={\rm ro}(\mak a)_{i+1}={\rm co}(\mak a)_{i+1}-1.
\end{align*}
So $d(\mak a)-d(\mak a^t)={\rm co}(\mak a)_{i}-{\rm co}(\mak a)_{i+1}-1$.
We further have
\begin{align*}
v\mbf K_i^{-1}[\mak a^t]\ast[\mak a_2]=v^{1+{\rm co}(\mak a)_i-{\rm co}(\mak a_{i+1})}[\mak a^t]\ast[\mak a_2].
\end{align*}
Then this case follows since $\mbf E_i\mbf K_i=v^2\mbf K_i\mbf E_i$.

As for the case $\mbf T_0$, If $\mak a=(A,\alpha)$ and $A-E_{\theta}^{0,1}$ is diagonal.
Thus we have
\begin{align*}
\mak a^t=(A, \alpha'), \alpha'\neq \alpha, \ {\rm and} \ d(\mak a)=d(\mak a^t)
={\rm co}(\mak a)_1-1.
\end{align*}
This implies the case of $\mbf T_0$.
The proof of case $\mbf T_r$ is similar, and we omit it.
\end{proof}
The same argument as in \cite{Mc12} shows that there is a well-defined bilinear $\langle\cdot,\cdot\rangle$ on $\dot{\mbf U}_n^\mak d$ given by
\begin{align*}
\langle x ,y\rangle=\sum_{d=1}^n{\underset{p\rightarrow\infty}{\rm lim}}\langle x_{d+pr}, y_{d+pn}\rangle_{d+pn}, \ \   \forall x=(x_d),y=(y_d) \in \dot{\mbf U}_n^\mak d.
\end{align*}
\begin{remark}
The same adjointness property as in Proposition \ref{adjoint} holds for the bilinear form $\langle\cdot,\cdot\rangle$ on $\dot{\mbf U}_n^\mak d$.
\end{remark}

Following the argument of \cite[Proposition 5.2]{LW15} and \cite{Mc12},
which is formal and not reproduced here, we have the following proposition.
\begin{proposition}\label{monomial}
For any $A \in \Xi_\mak d^{ap}$, we have
\begin{align*}
\phi_{d+pn,d+(p-1)n}^\mak d(\{_{2p}\mak a\}_{d+pn})
=\{_{2p-2}\mak a\}_{d+(p-1)n}, \ \  \forall p\gg 0.
\end{align*}
Moreover, we have
\begin{align*}
\{_{2p}\mak a\}_{d+pn}=\zeta_{_{2p}\mak a}+\sum_{\mak b\in \widetilde{\Xi}_{n,\mak d}^{ap}; \mak b<\mak a}c_{\mak a,\mak b,p}\zeta_{_{2p}\mak b}
\end{align*}
with $c_{\mak a,\mak b,p} \in \mcal A$ independent of $p$ for $p\gg0$.
\end{proposition}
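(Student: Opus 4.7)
The plan is to proceed in two stages: first establish compatibility of the monomial basis with the transfer maps, and then deduce the canonical-basis statements by bar-invariance together with the uni-triangular uniqueness given by Theorem \ref{1222}.

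First, I would show that for $p$ sufficiently large, $\phi_{d+pn,d+(p-1)n}^\mak d(\zeta_{_{2p}\mak a}) = \zeta_{_{2p-2}\mak a}$. Recall that $\zeta_{\mak a'}$ for $\mak a' \in \Xi_\mak d^{ap}$ is constructed in Lemma \ref{chen121212} as an aperiodic monomial $[\mak b_1]\ast\cdots\ast[\mak b_m]$ in which each $\mak b_i$ differs from a diagonal matrix by a single off-diagonal block $R_i E_\theta^{h_i,h_i+1}$. Since the proposition immediately before \eqref{chen1214} shows $\wit\phi_{d,d-n}^\mak d$ preserves the Chevalley generators $\mbf E_i, \mbf F_i, \mbf K_i, \mbf T_0, \mbf T_r$ (and $\phi_{d,d-n}^\mak d$ preserves $\mbf J_\pm$), the shift $\mak a \mapsto \mak a + 2pI$ alters only the diagonal weights that appear as idempotents in the product. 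Provided $p$ is large enough that every diagonal entry of $\mak a + 2(p-1)I$ is non-negative, a factor-by-factor verification yields the desired compatibility.

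Second, by Theorem \ref{1222} we have the uni-triangular expansion
\begin{equation*}
\{_{2p}\mak a\}_{d+pn} = \zeta_{_{2p}\mak a} + \sum_{\mak b \in \wit\Xi_{n,\mak d}^{ap},\ \mak b < \mak a} c_{\mak a, \mak b, p}\, \zeta_{_{2p}\mak b},
\end{equation*}
the sum being finite and the coefficients $c_{\mak a,\mak b,p}$ lying in $v^{-1}\mbb Z[v^{-1}]$. Since $\wit\Delta^\mak d$ and $\chi_n$ are both bar-equivariant, the transfer map commutes with the bar involution, so $\phi_{d+pn,d+(p-1)n}^\mak d(\{_{2p}\mak a\}_{d+pn})$ is bar-invariant, lies in $\mbf U_{n,d+(p-1)n}^\mak d$, and by Step~1 equals $\zeta_{_{2p-2}\mak a} + \sum c_{\mak a,\mak b,p}\,\zeta_{_{2p-2}\mak b}$. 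The standard uniqueness of elements characterized by their leading term, bar-invariance, and lower-triangular expansion with coefficients in $v^{-1}\mbb Z[v^{-1}]$ then forces $\phi_{d+pn,d+(p-1)n}^\mak d(\{_{2p}\mak a\}_{d+pn}) = \{_{2p-2}\mak a\}_{d+(p-1)n}$, together with $c_{\mak a,\mak b,p} = c_{\mak a,\mak b,p-1}$ for all $p \gg 0$, yielding the claimed stabilization.

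The main obstacle is showing that the intersection-cohomology polynomials $P_{\mak b,\mak a}$ associated to the orbits of $_{2p}\mak a$ become essentially independent of $p$ once $p$ is large. I would avoid a direct geometric argument, following the strategy of McGerty \cite{Mc12} in affine type $\mrm A$ and Li--Wang \cite[Theorem 5.5]{LW15}: the aperiodic monomial construction of Lemma \ref{chen121212} depends only on the off-diagonal data of $\mak a$, which is preserved by $\mak a \mapsto \mak a + 2pI$, so once the diagonal idempotents are well-defined the whole construction is literally insensitive to $p$; bar-invariance plus lower-triangular uniqueness then transfers this insensitivity from the monomial basis to the canonical basis.
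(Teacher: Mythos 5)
Your Step 1 (compatibility of the monomial basis with the transfer map) is fine and is exactly what the paper itself uses to define $\zeta_{\widehat{\mak a}}$. The genuine gap is in Step 2. The canonical basis element $\{_{2p-2}\mak a\}_{d+(p-1)n}$ is characterized by bar-invariance together with a lower-triangular expansion \emph{in the standard basis} $[\mak b]$ with off-diagonal coefficients in $v^{-1}\mbb Z[v^{-1}]$ (or, equivalently, by bar-invariance, integrality and almost-orthonormality for the pairing $\langle\cdot,\cdot\rangle_d$). What you actually produce for $\phi_{d+pn,d+(p-1)n}^\mak d(\{_{2p}\mak a\}_{d+pn})$ is a bar-invariant element of the form $\zeta_{_{2p-2}\mak a}+\sum_{\mak b<\mak a}c_{\mak a,\mak b,p}\zeta_{_{2p-2}\mak b}$ with $c_{\mak a,\mak b,p}\in\mcal A$ only. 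Bar-invariance plus a correct leading term in the (bar-invariant) monomial basis does not pin the element down: any $\mcal A$-combination $\sum_{\mak b<\mak a}h_{\mak b}\{_{2p-2}\mak b\}$ with $h_{\mak b}=\overline{h_{\mak b}}$ could be added. Your stated coefficient bound $c_{\mak a,\mak b,p}\in v^{-1}\mbb Z[v^{-1}]$ is not available (Theorem \ref{1222} only gives $\mcal A$; indeed, since the $\zeta$'s are themselves bar-invariant, coefficients in $v^{-1}\mbb Z[v^{-1}]$ would force $c_{\mak a,\mak b,p}=0$ and hence $\{\mak a\}_d=\zeta_{\mak a}$, which is false in general). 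So the ``standard uniqueness'' you invoke is being applied with respect to the wrong basis and the wrong coefficient ring, and the closing claim that ``bar-invariance plus lower-triangular uniqueness transfers the insensitivity from the monomial basis to the canonical basis'' is circular: the needed triangularity of $\phi(\{_{2p}\mak a\})$ over $v^{-1}\mbb Z[v^{-1}]$ in the target is essentially the statement being proved.

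The missing ingredient is precisely what the sources the paper cites supply. McGerty's argument runs through the bilinear form: using the adjointness of Proposition \ref{adjoint} one shows that the pairings $\langle[{}_{2p}\mak a],[{}_{2p}\mak b]\rangle_{d+pn}$ converge as $p\to\infty$ and that $\phi^{\mak d}_{d+pn,d+(p-1)n}$ is asymptotically compatible with $\langle\cdot,\cdot\rangle$; almost-orthonormality then identifies $\phi(\{_{2p}\mak a\})$ with $\{_{2p-2}\mak a\}$ up to sign, and the monomial leading term fixes the sign. The alternative (Lusztig/\cite{FLLLW20}-style) route computes $\phi^{\mak d}$ explicitly on standard basis elements and checks the $v^{-1}\mbb Z[v^{-1}]$ control directly. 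Either way, some quantitative control beyond bar-invariance and the monomial expansion is required; note also that the structure constants in Proposition \ref{divided formula} involve the diagonal entries $a_{h,h}$, which shift by $2p$, so the expansion of $\zeta_{_{2p}\mak a}$ in the standard basis is not ``literally insensitive to $p$'' --- it only stabilizes after cancellation, which must be argued.
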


\begin{definition}
For any $\widehat{\mak a} \in \Xi_{n,\mak d}^{ap}/\approx$, an element $b_{\widehat{\mak a}} \in \dot{\mbf U}_n^\mak d$ is defined as follows: $(b_{\widehat{\mak a}})_d=0$ if $d \neq |\mak a| \ {\rm mod} \ 2n$. If $|\mak a|=d+sn$ for some integer $s$, we set
\begin{align*}
(b_{\widehat{\mak a}})_{d+sn+pn}=\{_{2p}\mak a\}_{d+sn+pn},  \ \  \forall p\geq p_0, \  {\rm for \ some \ fixed} \ p_0,
\end{align*}
and for general $p<p_0$,
we set $(b_{\widehat{\mak a}})_{d+sn+pn}=\phi_{d+sn+p_0n,d+sn+pn}
(\{_{2p_0}\mak a\}_{d+sn+p_0n}).$
\end{definition}

The fact that $b_{\widehat{\mak a}}$ as defined above lies in $\dot{\mbf U}_n^\mak d$ follows from Proposition \ref{monomial}.
Moreover, $\zeta_{\widehat{\mak a}}=b_{\widehat{\mak a}}+{\rm lower \ terms}$.
The next theorem follows from that $\{\zeta_{\widehat{\mak a}} \mid \widehat{\mak a} \in \widetilde{\Xi}_{n,\mak d}^{ap}/\approx\}$ forms a basis for $\dot{\mbf U}_n^\mak d$.
\begin{theorem}\label{1500}
The set $\dot{\mbf B}_n^\mak d:=\{b_{\widehat{\mak a}}\mid \widehat{\mak a} \in \widetilde{\Xi}_{n,\mak d}^{ap}/\approx\}$ forms a basis for $\dot{\mbf U}_n^\mak d$.
\end{theorem}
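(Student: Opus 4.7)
The plan is to derive the theorem directly from the unitriangular relation $\zeta_{\widehat{\mak a}} = b_{\widehat{\mak a}} + \text{lower terms}$ announced just before the statement, combined with the fact already established in the preceding proposition that $\{\zeta_{\widehat{\mak a}}\mid \widehat{\mak a} \in \widetilde{\Xi}_{n,\mak d}^{ap}/\approx\}$ is a basis of $\dot{\mbf U}_n^\mak d$. In other words, I would reduce the theorem to a change-of-basis argument via an invertible triangular transition matrix.

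First, I would confirm that each $b_{\widehat{\mak a}}$ actually lies in $\dot{\mbf U}_n^\mak d$ (not merely in the ambient projective limit $\mbf U_{n,\infty}^\mak d$). The stabilization identity $\phi_{d+pn,d+(p-1)n}^\mak d(\{_{2p}\mak a\}_{d+pn}) = \{_{2p-2}\mak a\}_{d+(p-1)n}$ from Proposition \ref{monomial} guarantees that the sequence used in the definition of $b_{\widehat{\mak a}}$ respects the transfer maps for all sufficiently large $p$, while the definition below $p_0$ is by construction compatible with $\phi$. Proposition \ref{monomial} further gives
\[
\{_{2p}\mak a\}_{d+sn+pn} \;=\; \zeta_{_{2p}\mak a} + \sum_{\mak b < \mak a} c_{\mak a,\mak b,p}\,\zeta_{_{2p}\mak b}
\]
with the coefficients $c_{\mak a,\mak b,p} \in \mcal A$ independent of $p$ in the stable range, and with only finitely many lower terms at each level. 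Assembling these identities across $d$ produces the compact expansion
\[
b_{\widehat{\mak a}} = \zeta_{\widehat{\mak a}} + \sum_{\widehat{\mak b} < \widehat{\mak a}} c_{\widehat{\mak a},\widehat{\mak b}}\,\zeta_{\widehat{\mak b}},
\]
which exhibits $b_{\widehat{\mak a}}$ as a finite $\mcal A$-linear combination of elements of $\dot{\mbf U}_n^\mak d$, so $b_{\widehat{\mak a}} \in \dot{\mbf U}_n^\mak d$.

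Second, I would invert this triangular relation. The partial order $<$ descends from $\widetilde{\Xi}_{n,\mak d}^{ap}$ to the set of equivalence classes $\widetilde{\Xi}_{n,\mak d}^{ap}/\approx$ because shifting by $pI$ preserves Bruhat-style containment of orbit closures, and the comparison $\mak b < \mak a$ is stable under the shift used in the definition of $b_{\widehat{\mak a}}$. The expansion above is then unitriangular with respect to this induced order, and since the set $\{\widehat{\mak b} \mid \widehat{\mak b} < \widehat{\mak a}\}$ entering the sum is finite (inherited from the finiteness at each Schur level), a standard induction on the order produces inverse coefficients $c'_{\widehat{\mak a},\widehat{\mak b}} \in \mcal A$ such that
\[
\zeta_{\widehat{\mak a}} = b_{\widehat{\mak a}} + \sum_{\widehat{\mak b} < \widehat{\mak a}} c'_{\widehat{\mak a},\widehat{\mak b}}\, b_{\widehat{\mak b}}.
\]
Combined with the previously established basis property of $\{\zeta_{\widehat{\mak a}}\}$, this shows that $\dot{\mbf B}_n^\mak d$ spans $\dot{\mbf U}_n^\mak d$ and is linearly independent, hence is a basis.

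The main technical obstacle is the bookkeeping needed to ensure that the unitriangular expansion, valid level-by-level by Proposition \ref{monomial}, genuinely passes to the quotient by $\approx$ and yields a well-defined triangular system on equivalence classes. Concretely, one must check that the partial order $<$ is well-founded on $\widetilde{\Xi}_{n,\mak d}^{ap}/\approx$ (so the inductive inversion terminates) and that the finitely many lower terms appearing at each Schur level in Proposition \ref{monomial} are compatible under the shift $\mak a \mapsto \mak a + 2I$, so that their representatives modulo $\approx$ remain strictly below $\widehat{\mak a}$ and yield coefficients $c_{\widehat{\mak a},\widehat{\mak b}}$ independent of the chosen level. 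Once this compatibility is verified, the theorem follows immediately by the triangular change-of-basis argument sketched above.
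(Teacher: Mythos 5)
Your proposal is correct and follows essentially the same route as the paper: the paper likewise deduces the theorem from Proposition \ref{monomial} (which gives both that $b_{\widehat{\mak a}}\in\dot{\mbf U}_n^{\mak d}$ and the unitriangular relation $\zeta_{\widehat{\mak a}}=b_{\widehat{\mak a}}+\text{lower terms}$) together with the previously established fact that $\{\zeta_{\widehat{\mak a}}\}$ is a basis of $\dot{\mbf U}_n^{\mak d}$. Your write-up is in fact more careful than the paper's, which leaves the triangular inversion and the descent of the partial order to equivalence classes implicit.
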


The basis $\dot{\mbf B}_n^\mak d$ is called the canonical basis of $\dot{\mbf U}_n^\mak d$.

\begin{proposition}
The signed canonical basis $\{\pm b_{\widehat{\mak a}} \mid \widehat{\mak a} \in \widetilde{\Xi}_{n,\mak d}^{ap}/\approx\}$ is characterized by the bar-invariance, integrality (i.e., $b_{\widehat{\mak a}}\in \dot{\mbf U}_{n;\mcal A}^\mak d$), and almost orthonormality (i.e., $\langle b_{\widehat{\mak a}},b_{\widehat{\mak a'}}\rangle=\delta_{\mak a,\mak a'} \ {\rm mod} \ v^{-1}\mbb Z[[v^{-1}]]$).
\end{proposition}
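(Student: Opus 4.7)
The plan is to establish the three properties (bar-invariance, integrality, almost orthonormality) for the canonical basis $\dot{\mbf B}_n^\mak d$ and then to deduce uniqueness via a standard Kazhdan--Lusztig-type rigidity argument. All three properties will be pulled back from the corresponding statements at the finite level $\mbf S_{n,d}^\mak d$ and then transferred via the projective system $\{(\mbf U_{n,d}^\mak d,\phi_{d,d-n}^\mak d)\}_{d\geq 1}$ using Proposition \ref{monomial}.

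First I would verify the three properties of $b_{\widehat{\mak a}}$. Bar-invariance follows because each $\{{}_{2p}\mak a\}_{d+sn+pn}$ is bar-invariant at the Schur algebra level (this is built into the definition via the polynomials $P_{\mak b,\mak a}$ satisfying the Kazhdan--Lusztig normalization $P_{\mak a,\mak a}=1$ and $P_{\mak b,\mak a}\in v^{-1}\mbb Z[v^{-1}]$ for $\mak b<\mak a$), and the transfer map commutes with the bar involution. Integrality is immediate from the expansion $\{{}_{2p}\mak a\}_{d+sn+pn}=\zeta_{{}_{2p}\mak a}+\text{lower terms}$ in Proposition \ref{monomial}, combined with the fact that $\zeta_{\widehat{\mak a}}\in\dot{\mbf U}_{n;\mcal A}^\mak d$ by definition.

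For almost orthonormality, I would first compute $\langle [\mak a],[\mak a']\rangle_d$ at the Schur algebra level: by definition this pairing is supported on $[\mak a]=[\mak a']$ and evaluates to $v^{-2d(\mak a^t)}\sharp X_{\mak a^t}^{L'}$. Using the canonical-basis expansion $\{\mak a\}_d=[\mak a]+\sum_{\mak b<\mak a}P_{\mak b,\mak a}[\mak b]$ with $P_{\mak b,\mak a}\in v^{-1}\mbb Z[v^{-1}]$, together with the leading behaviour of $\sharp X_{\mak a^t}^{L'}$ being $1+O(v^{-1})$ after the appropriate normalization, one checks that $\langle \{\mak a\}_d,\{\mak a'\}_d\rangle_d=\delta_{\mak a,\mak a'}\pmod{v^{-1}\mbb Z[[v^{-1}]]}$. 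Passing to the projective limit with the bilinear form $\langle x,y\rangle=\sum_{d=1}^n\lim_{p\to\infty}\langle x_{d+pn},y_{d+pn}\rangle_{d+pn}$, and using the stabilization in Proposition \ref{monomial} to guarantee that the limits exist and behave correctly, yields the required almost orthonormality on $\dot{\mbf U}_n^\mak d$.

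For uniqueness, I would invoke the standard argument: suppose $\{c_{\widehat{\mak a}}\}$ is any family of bar-invariant, integral elements in $\dot{\mbf U}_n^\mak d$ that is almost orthonormal, with $c_{\widehat{\mak a}}\equiv b_{\widehat{\mak a}}\pmod{v^{-1}\dot{\mbf U}_{n;\mcal A}^\mak d}$ in the leading term. Expand $c_{\widehat{\mak a}}=\sum_{\widehat{\mak b}}\gamma_{\widehat{\mak a},\widehat{\mak b}}b_{\widehat{\mak b}}$ with $\gamma_{\widehat{\mak a},\widehat{\mak b}}\in\mcal A$. Bar-invariance forces $\overline{\gamma_{\widehat{\mak a},\widehat{\mak b}}}=\gamma_{\widehat{\mak a},\widehat{\mak b}}$, while the almost orthonormality condition $\langle c_{\widehat{\mak a}},c_{\widehat{\mak a'}}\rangle\equiv\delta_{\mak a,\mak a'}\pmod{v^{-1}\mbb Z[[v^{-1}]]}$ combined with the almost orthonormality of $\{b_{\widehat{\mak a}}\}$ forces $\sum_{\widehat{\mak c}}\gamma_{\widehat{\mak a},\widehat{\mak c}}\gamma_{\widehat{\mak a'},\widehat{\mak c}}\equiv\delta_{\mak a,\mak a'}\pmod{v^{-1}\mbb Z[[v^{-1}]]}$. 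A bar-invariant Laurent polynomial which is $\equiv\delta_{\mak a,\mak a'}\pmod{v^{-1}\mbb Z[[v^{-1}]]}$ must equal $\delta_{\mak a,\mak a'}$, forcing $\gamma_{\widehat{\mak a},\widehat{\mak b}}=\pm\delta_{\widehat{\mak a},\widehat{\mak b}}$. Hence $c_{\widehat{\mak a}}=\pm b_{\widehat{\mak a}}$.

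The main obstacle will be justifying the passage of the almost orthonormality statement from the finite Schur algebras to the limit $\dot{\mbf U}_n^\mak d$: one must ensure that the normalized pairings $\langle\{{}_{2p}\mak a\}_{d+pn},\{{}_{2p}\mak a'\}_{d+pn}\rangle_{d+pn}$ stabilize in $p$ and have a well-defined limit in $\mbb Z[[v^{-1}]]$, and that the error terms in $v^{-1}\mbb Z[[v^{-1}]]$ do not accumulate. This requires a careful dimension count analogous to Lemma \ref{matric} combined with McGerty's argument \cite{Mc12} in affine type $\mrm A$; the affine type $\mrm D$ sign decomposition $\mcal X_{n,d}^\mak d=\mcal X_{n,d}^{\mak d,1}\sqcup\mcal X_{n,d}^{\mak d,2}$ forces one to treat the $\mbf J_\pm$-components separately, which is precisely why a $\pm$ sign appears in the characterization (and so signed canonical basis, rather than canonical basis, is what is rigidified).
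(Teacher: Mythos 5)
Your proposal is correct and spells out precisely the standard Lusztig--McGerty characterization argument that the paper invokes for this proposition without writing out any proof (the three properties pulled back from the Schur-algebra level through the stabilization of Proposition \ref{monomial}, then the rigidity argument for uniqueness). The one step to tighten is the order of operations in the rigidity argument: before you can assert $\sum_{\widehat{\mak c}}\gamma_{\widehat{\mak a},\widehat{\mak c}}\gamma_{\widehat{\mak a}',\widehat{\mak c}}\equiv\delta_{\mak a,\mak a'}\ {\rm mod}\ v^{-1}\mbb Z[[v^{-1}]]$ you must first bound the $v$-degrees of the coefficients (take $\widehat{\mak a}=\widehat{\mak a}'$ and note that a top term $v^{2N}$ with $N\geq 1$ in $\sum_{\widehat{\mak c}}\gamma_{\widehat{\mak a},\widehat{\mak c}}^{2}$ cannot be cancelled by the error terms, which lie in $v^{2N-1}\mbb Z[[v^{-1}]]$), because otherwise the cross terms $\gamma_{\widehat{\mak a},\widehat{\mak b}}\gamma_{\widehat{\mak a}',\widehat{\mak c}}\langle b_{\widehat{\mak b}},b_{\widehat{\mak c}}\rangle$ with $\widehat{\mak b}\neq\widehat{\mak c}$ need not lie in $v^{-1}\mbb Z[[v^{-1}]]$; this is the standard fix as in Lusztig's almost-orthonormality theorem and does not change your overall argument.
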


The canonical basis of $\dot{\mbf U}_n^\mak d$
enjoy several remarkable positivity properties as follows.
 The proof use the same argument as in \cite{LW15,FL14,FL15}.

\begin{theorem}\label{1501}
The structure constants of the canonical basis $\dot{\mbf B}_n^\mak d$
 lie in $\mbb N[v,v^{-1}]$ with
respect to the multiplication, and in $v^{-1}\mbb N[[v^{-1}]]$ with respect to the bilinear pairing.
\end{theorem}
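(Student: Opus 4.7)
My plan is to transfer the two positivity statements from the Schur algebra $\mbf S_{n,d}^\mak d$ to the idempotented algebra $\dot{\mbf U}_n^\mak d$ by combining the canonical basis positivity already established at each finite level with the stabilization result of Proposition \ref{monomial}, following the template of \cite{Mc12} in affine type $\mrm A$ and \cite{LW15,FL14,FL15} in related settings.

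For the multiplicative positivity, I would start from the fact that the canonical basis $\{\{\mak a\}_d \mid \mak a\in\Xi_\mak d\}$ of $\mbf S_{n,d}^\mak d$ has structure constants in $\mbb N[v,v^{-1}]$, a consequence of the Decomposition Theorem applied to the convolution product of the IC sheaves of orbit closures in $\mcal X_{n,d}^\mak d\times\mcal X_{n,d}^\mak d$, already recorded in Section 2.3. Given $\widehat{\mak a},\widehat{\mak b}\in\widetilde\Xi_{n,\mak d}^{ap}/\approx$, the product $b_{\widehat{\mak a}}\cdot b_{\widehat{\mak b}}$ evaluated at the $(d+sn+pn)$-th component is precisely the Schur-algebra convolution $\{_{2p}\mak a\}_{d+sn+pn}\ast\{_{2p}\mak b\}_{d+sn+pn}$, which expands in $\{\{_{2p}\mak c\}_{d+sn+pn}\}$ with coefficients in $\mbb N[v,v^{-1}]$. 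Proposition \ref{monomial} provides stabilization under the transfer map $\phi^\mak d_{d,d-n}$, namely $\phi_{d+pn,d+(p-1)n}^\mak d(\{_{2p}\mak c\}_{d+pn})=\{_{2p-2}\mak c\}_{d+(p-1)n}$ for $p\gg 0$, so these coefficients stabilize in $p$ and, by Theorem \ref{1500}, must be the structure constants of $\dot{\mbf U}_n^\mak d$ expressed in $\dot{\mbf B}_n^\mak d$; positivity is therefore inherited.

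For the bilinear pairing, the argument is parallel but more delicate. At the Schur level, $\langle[\mak a],[\mak b]\rangle_d=\delta_{\mak a,\mak b}v^{-2d(\mak a^t)}\sharp X_{\mak a^t}^{L'}$ lies in $\mbb N[v,v^{-1}]$ on standard basis elements, and the unitriangular change of basis $\{\mak a\}_d=\sum_{\mak b\leq\mak a} P_{\mak b,\mak a}[\mak b]$ with $P_{\mak a,\mak a}=1$ and $P_{\mak b,\mak a}\in v^{-1}\mbb Z[v^{-1}]$ together with the geometric interpretation of $\langle\cdot,\cdot\rangle_d$ as a trace on intersection cohomology (the analogue of \cite[\S3]{Mc12}) yields $\langle\{\mak a\}_d,\{\mak b\}_d\rangle_d\in\delta_{\mak a,\mak b}+v^{-1}\mbb N[v^{-1}]$. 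Passing to the limit $\lim_{p\to\infty}\langle\,\cdot\,,\,\cdot\,\rangle_{d+pn}$ that defines $\langle\cdot,\cdot\rangle$ on $\dot{\mbf U}_n^\mak d$, and applying the same stabilization argument as for multiplication (so that the contribution at each power of $v^{-1}$ eventually becomes independent of $p$), one obtains $\langle b_{\widehat{\mak a}},b_{\widehat{\mak b}}\rangle\in\delta_{\widehat{\mak a},\widehat{\mak b}}+v^{-1}\mbb N[[v^{-1}]]$, which together with the almost orthonormality characterization gives $v^{-1}\mbb N[[v^{-1}]]$ for $\widehat{\mak a}\neq\widehat{\mak b}$.

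The hardest step will be verifying that the limit defining $\langle\cdot,\cdot\rangle$ on $\dot{\mbf U}_n^\mak d$ genuinely records the positive contributions level by level: one must show that the normalization $v^{-2d(\mak a^t)}$, the change of $d(\mak a^t)$ as $\mak a$ varies within its $\approx$-equivalence class, and the growth of $\sharp X_{\mak a^t}^{L'}$ with $p$ combine so that only finitely many positive monomials contribute below any fixed power of $v^{-1}$. This is exactly the analytic point treated in \cite[Prop.~5.2]{LW15} and \cite{Mc12}, and the translation to the present signed-matrix setting requires only bookkeeping for the idempotents $\mbf J_\pm$ (which act within a single $\mbf J_\pm$-block and hence commute through the pairing) and for the extra generators $\mbf T_0,\mbf T_r$ (whose multiplication and adjointness formulas have already been recorded in Section 2.4 and Proposition \ref{adjoint}). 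Once this stabilization is in place, both positivity statements follow formally.
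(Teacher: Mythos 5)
Your proposal is correct and follows essentially the same route as the paper, which for this theorem offers no details beyond the remark that ``the proof uses the same argument as in \cite{LW15,FL14,FL15}'': namely, the finite-level positivity of the canonical basis of $\mbf S_{n,d}^{\mak d}$ together with the stabilization of Proposition \ref{monomial} and the limit construction of $\langle\cdot,\cdot\rangle$ from \cite{Mc12}. Your outline in fact supplies more of the argument (including the reduction of the product to aperiodic canonical basis elements via Theorem \ref{1222} and the convergence issue for the pairing) than the paper itself records.
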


\subsection{The quantum symmetric pair}
 Recall from Proposition \ref{1401}, there is an injective algebra homomorphism $\jmath_{n,d}: \widetilde{\mbf U}_{n,d}^\mak d\rightarrow \mbf U_{n,d}$, and we have the following commutative diagram
 \begin{align*}
  \xymatrix{
  \widetilde{\mbf U}_{n,d}^\mak d \ar[d]_{\widetilde{\phi}_{d,d-n}^\mak d} \ar[r]^{\jmath_{n,d}}
                & \mbf U_{n,d} \ar[d]^{\phi_{d,d-n}}  \\
  {\widetilde{\mbf U}_{n,d-n}^\mak d} \ar@{} \ar[r]_{\jmath_{n,d-n}}
                & \mbf U_{n,d-n}     }
 \end{align*}
 That is, $\phi_{d,d-n}\circ\jmath_{n,d}=\jmath_{n,d-n}\circ\widetilde{\phi}_{d,d-n}^\mak d$.
 Thus by the universality of $\mbf U_{n,\infty}$ (the projective limit of Lusztig algebras of type $\mrm A$), we have a unique algebra homomorphism
 \begin{align*}
 \jmath_n: \widetilde{\mbf U}_{n,\infty}^\mak d\rightarrow \mbf U_{n,\infty},
 \end{align*}
 such that $\widetilde{\phi}_d^\mak d\circ\jmath_n=\jmath_{n,d}\circ\widetilde{\phi}_d^\mak d$.

Since $\jmath_{n,d}$ is injective for all $d$, so is $\jmath_n: \widetilde{\mbf U}_{n,\infty}^\mak d\rightarrow \mbf U_{n,\infty}$.
It follows by (\ref{chen2}) that the image of $\widetilde{\mbf U}_n^\mak d$ under $\jmath_n$ lies in $\mbf U_n$, where $\mbf U_n$ denote the subalgebra generated by Chevalley generators $\mbf e_i, \mbf f_i$ and $\mbf k_i$ in the projective limit $\mbf U_{n,\infty}$.
Summarizing, we have obtained the following.
\begin{proposition}\label{1400}
There is a unique algebra imbedding $\jmath_n: \widetilde{\mbf U}_n^\mak d\rightarrow\mbf U_n$ such that
\begin{align}\label{chen1215}
\jmath_n(\mbf E_i)&=\mbf e_i+\mbf k_i\mbf f_{n-i},&
\jmath_n(\mbf F_i)&= \mbf e_{n-i}+\mbf f_i\mbf k_{n-i},\nonumber\\
\jmath_n(\mbf T_0)&=\mbf e_0+v^{-1}\mbf f_0\mbf k_0,&
\jmath_n(\mbf T_r)&=\mbf e_r+v^{-1}\mbf f_r\mbf k_r,\\
\jmath_n(\mbf K_i)&=\mbf k_i\mbf k_{n-i}. &\ &\ \nonumber
\end{align}
\end{proposition}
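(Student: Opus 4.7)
The plan is to apply the universal property of the projective limit $\mbf U_{n,\infty}$. All the pieces are in place: Proposition \ref{1401} supplies the injective algebra homomorphisms $\jmath_{n,d} : \wit{\mbf U}_{n,d}^\mak d \to \mbf U_{n,d}$, and the text preceding the statement asserts the square $\phi_{d,d-n} \circ \jmath_{n,d} = \jmath_{n,d-n} \circ \wit{\phi}_{d,d-n}^\mak d$. First I would verify this compatibility by checking it on the generators of $\wit{\mbf U}_{n,d}^\mak d$. The transfer map $\wit{\phi}_{d,d-n}^\mak d$ fixes each Chevalley generator (the proposition following \eqref{chen121}), and the affine type $\mrm A$ transfer map $\phi_{d,d-n}$ does likewise on the images in $\mbf U_{n,d}$; tracing each generator around the square via \eqref{chen2} thus gives equal results on both sides.

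Granted the compatibility, the family $\{\jmath_{n,d}\}_d$ descends, by the universal property of $\mbf U_{n,\infty} = \varprojlim_d \mbf U_{n,d}$, to a unique algebra homomorphism $\jmath_n : \wit{\mbf U}_{n,\infty}^\mak d \to \mbf U_{n,\infty}$ with $\phi_d \circ \jmath_n = \jmath_{n,d} \circ \wit{\phi}_d^\mak d$ for every $d$. Injectivity is then automatic: if $x = (x_d)_d$ satisfies $\jmath_n(x) = 0$, each $\jmath_{n,d}(x_d) = 0$ forces $x_d = 0$ by Proposition \ref{1401}, hence $x = 0$.

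It remains to show that the restriction of $\jmath_n$ to $\wit{\mbf U}_n^\mak d$ lands in $\mbf U_n$ and satisfies the listed formulas. This is a direct componentwise computation using \eqref{chen2}: the $d$-th component of $\jmath_n(\mbf E_i)$ is $\jmath_{n,d}(\mbf E_{i,d}) = \mbf e_{i,d} + \mbf k_{i,d} \mbf f_{n-i,d}$, and the uniformity of this description in $d$ means the components assemble into the element $\mbf e_i + \mbf k_i \mbf f_{n-i}$ of $\mbf U_n$. The parallel computations for $\mbf F_i$, $\mbf K_i$, $\mbf T_0$, and $\mbf T_r$ produce the remaining formulas in \eqref{chen1215}. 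There is no genuine obstacle in this proposition: it is a formal assembly of the already-available injective level-$d$ maps into their projective limit, with the explicit formulas falling out of the $d$-independence of \eqref{chen2}.
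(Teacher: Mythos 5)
Your proposal is correct and follows essentially the same route as the paper: verify the compatibility square $\phi_{d,d-n}\circ\jmath_{n,d}=\jmath_{n,d-n}\circ\widetilde{\phi}_{d,d-n}^{\mathfrak d}$ on generators, invoke the universal property of the projective limit to obtain $\jmath_n$, deduce injectivity from the level-$d$ injectivity of Proposition \ref{1401}, and read off the formulas \eqref{chen1215} from the $d$-independence of \eqref{chen2}.
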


Recall $\Delta^\mak d$ from (\ref{1212}), we have the following commutative diagram
\begin{align*}
\xymatrix{
  \widetilde{\mbf U}_{n,d'+d''}^\mak d \ar[d]_{\widetilde{\phi}_{d'+d'',d'+d''-(a+b)n}^\mak d} \ar[r]^{\Delta^\mak d}
                & \widetilde{\mbf U}_{n,d'}^\mak d\otimes \mbf U_{n,d''} \ar[d]^{\widetilde{\phi}_{d',d'-an}^\mak d
                \otimes\phi_{d'',d''-bn}}  \\
  \widetilde{\mbf U}_{n,d'+d''-(a+b)n}^\mak d \ar@{} \ar[r]_{\Delta^\mak d}
                & \widetilde{\mbf U}_{n,d'-an}^\mak d\otimes \mbf U_{n,d''-bn}}
\end{align*}
for any $a,b \in \mbb N$.
By universality, these $\Delta^\mak d \ ({\rm for} \ d',d'',n)$ induce an algebra homomorphism
\begin{equation*}
  \Delta^\mak d: \widetilde{\mbf U}_{n,\infty}^\mak d\rightarrow
  \widetilde{\mbf U}_{n,\infty}^\mak d\otimes \mbf U_{n,\infty}.
\end{equation*}
Moreover, the image of $\widetilde{\mbf U}_n^\mak d$ under $\Delta^\mak d$ is contained in $\widetilde{\mbf U}_n^\mak d\otimes \mbf U_n$ by Proposition \ref{1216}.
To sum up, we have the following.
\begin{proposition}\label{1900}
There is a unique algebra homomorphism $\Delta^\mak d: \widetilde{\mbf U}_n^\mak d\rightarrow
\widetilde{\mbf U}_n^\mak d\otimes \mbf U_n$ such that, for all $i\in [1,r-1]$,
\begin{align*}
\Delta^\mak d(\mbf E_i) &= \mbf E_i\otimes \mbf k_i+1\otimes\mbf e_i+\mbf K_i\otimes\mbf f_{n-i}\mbf k_i. \\
\Delta^\mak d(\mbf F_i) &= \mbf F_i\otimes \mbf k_{n-i}+\mbf K_i^{-1}\otimes \mbf k_{n-i}\mbf f_i+1\otimes\mbf e_{n-i}.\\
\Delta^\mak d(\mbf T_0) &= \mbf T_0\otimes\mbf k_0+1\otimes \mbf e_0+v^{-1}\otimes\mbf f_0\mbf k_0.\\
\Delta^\mak d(\mbf T_r) &= \mbf T_r\otimes \mbf k_r+1\otimes\mbf e_r+v^{-1}\otimes\mbf f_r\mbf k_0.\\
\Delta^\mak d(\mbf K_i) &= \mbf K_i\otimes\mbf k_i\mbf k_{n-i}^{-1}.
\end{align*}
\end{proposition}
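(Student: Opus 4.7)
The plan is to obtain $\Delta^\mak d$ on $\wit{\mbf U}_n^\mak d$ by passing to the projective limit of the finite-level coproducts $\Delta^\mak d:\wit{\mbf S}_{n,d'+d''}^\mak d\to \wit{\mbf S}_{n,d'}^\mak d\otimes\mbf S_{n,d''}$ constructed in \eqref{1212}, and then reading off its values on the Chevalley generators from Proposition \ref{1216}. Uniqueness is automatic once the formulas on generators are established, since $\wit{\mbf U}_n^\mak d$ is by definition generated by $\mbf E_i,\mbf F_i,\mbf K_i^{\pm 1},\mbf T_0,\mbf T_r$.

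First I would verify the displayed commutative diagram, namely
\[
(\wit{\phi}_{d',d'-an}^\mak d\otimes \phi_{d'',d''-bn})\circ \Delta^\mak d
=\Delta^\mak d\circ \wit{\phi}_{d'+d'',d'+d''-(a+b)n}^\mak d,
\]
for all $a,b\in\mbb N$. This reduces, by induction on $a+b$, to the single-step compatibility with the transfer map $\wit{\phi}_{d,d-n}^\mak d=(1\otimes\chi_n)\circ\wit{\Delta}^\mak d$ from \eqref{chen121}. That single-step compatibility is the coassociativity of Proposition \ref{1217} combined with the fact that $\chi_n:\mbf S_{n,n}\to\mbb Q(v)$ is a character killing the off-diagonal Chevalley generators; the twist by $\xi_{d'',0,\bullet}$ built into \eqref{1212} is routine to match on both sides because the shift is linear in the argument and absorbs into $\chi_n$ correctly. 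Once the diagram commutes, the universal property of $\wit{\mbf U}_{n,\infty}^\mak d$ and $\mbf U_{n,\infty}$ yields a unique algebra homomorphism
$\Delta^\mak d:\wit{\mbf U}_{n,\infty}^\mak d\to \wit{\mbf U}_{n,\infty}^\mak d\otimes\mbf U_{n,\infty}$.

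Next I would restrict to the subalgebras $\wit{\mbf U}_n^\mak d\subset \wit{\mbf U}_{n,\infty}^\mak d$ and $\mbf U_n\subset \mbf U_{n,\infty}$. Since the image of each Chevalley generator of $\wit{\mbf U}_n^\mak d$ under $\Delta^\mak d$ is, at every level $d=d'+d''$, precisely the expression given in Proposition \ref{1216}, and these expressions are manifestly compatible across all $(d',d'')$, each of these images lies in the subalgebra $\wit{\mbf U}_n^\mak d\otimes \mbf U_n$. Reading the formulas off from Proposition \ref{1216} term by term produces the stated values of $\Delta^\mak d(\mbf E_i),\Delta^\mak d(\mbf F_i),\Delta^\mak d(\mbf T_0),\Delta^\mak d(\mbf T_r),\Delta^\mak d(\mbf K_i)$.

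The step I expect to be the principal technical obstacle is the one-step compatibility of $\Delta^\mak d$ with the transfer map, i.e.\ the equality
\[
(\wit{\phi}_{d',d'-n}^\mak d\otimes 1)\circ \Delta^\mak d
=\Delta^\mak d\circ \wit{\phi}_{d'+d'',d'+d''-n}^\mak d,
\]
and its mirror image with the transfer on the second factor. This is where the character $\chi_n$ must interact correctly with the renormalization factors $v^{s(\mbf b',\mbf a',\mbf b'',\mbf a'')+u(\mbf b'',\mbf a'')}$ and the additional twist $\xi_{d'',0,-(2d'+d''-1)}$; the cleanest way I would argue is to check the identity on a spanning set of canonical or standard basis elements at the Schur-algebra level (using the commutative square \eqref{comm-coproduct} and the definition of $\wit\Delta^\mak d$ via pull–push along the geometric correspondence), then restrict to the Lusztig subalgebra. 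Once this is in place, everything else is formal limit-taking and reading off generators.
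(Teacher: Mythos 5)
Your proposal follows essentially the same route as the paper: establish the compatibility of the finite-level maps $\Delta^\mak d$ with the transfer maps $\wit{\phi}^\mak d_{d,d-n}\otimes\phi_{d'',d''-bn}$, invoke the universal property of the projective limits to get $\Delta^\mak d:\wit{\mbf U}_{n,\infty}^\mak d\to\wit{\mbf U}_{n,\infty}^\mak d\otimes\mbf U_{n,\infty}$, and then restrict and read the generator formulas off Proposition \ref{1216}, with uniqueness automatic from generation by the Chevalley generators. The only stylistic difference is that you propose verifying the commutative square on standard/canonical basis elements at the Schur-algebra level, whereas it suffices (and is simpler, since the square lives on the Lusztig subalgebras) to check it on the Chevalley generators using Proposition \ref{1216} together with the fact that the transfer maps fix those generators.
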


This algebra homomorphism is coassociative by Proposition \ref{1217} in the sense that
\begin{equation}\label{1218}
 (1\otimes \Delta)\Delta^\mak d=(\Delta^\mak d\otimes 1)\Delta^\mak d.
\end{equation}
As a degenerate case for (\ref{1218})
\begin{align*}
\Delta\circ\jmath_n=(\jmath_n\otimes 1)\circ \Delta^\mak d.
\end{align*}
The algebra $\widetilde{\mbf U}_n^\mak d$ is a coideal subalgebra of $\mbf U_n$,
and the pair $(\mbf U_n, \widetilde{\mbf U}_n^\mak d)$ forms an affine quantum symmetric pair in the sense of Kolb-Letzter \cite{Ko14}.
The relevant involution is illustrated as follows:
\begin{figure}[ht!]
   \label{figure:ii}
   \begin{center}
\begin{tikzpicture}
\matrix [column sep={0.6cm}, row sep={0.5 cm,between origins}, nodes={draw = none,  inner sep = 3pt}]
{
	&\node(U1) [draw, circle, fill=white, scale=0.6, label = 1] {};
	&\node(U2) {$\cdots$};
	&\node(U3)[draw, circle, fill=white, scale=0.6, label =$r-1$] {};
\\
	\node(L)[draw, circle, fill=white, scale=0.6, label =0] {};
	&&&&
	\node(R)[draw, circle, fill=white, scale=0.6, label =$r$] {};
\\
	&\node(L1) [draw, circle, fill=white, scale=0.6, label =below:$2r-1$] {};
	&\node(L2) {$\cdots$};
	&\node(L3)[draw, circle, fill=white, scale=0.6, label =below:$r+1$] {};
\\
};
\begin{scope}
\draw (L) -- node  {} (U1);
\draw (U1) -- node  {} (U2);
\draw (U2) -- node  {} (U3);
\draw (U3) -- node  {} (R);
\draw (L) -- node  {} (L1);
\draw (L1) -- node  {} (L2);
\draw (L2) -- node  {} (L3);
\draw (L3) -- node  {} (R);
\draw (L) edge [color = blue, loop left, looseness=40, <->, shorten >=4pt, shorten <=4pt] node {} (L);
\draw (R) edge [color = blue,loop right, looseness=40, <->, shorten >=4pt, shorten <=4pt] node {} (R);
\draw (L1) edge [color = blue,<->, bend right, shorten >=4pt, shorten <=4pt] node  {} (U1);
\draw (L3) edge [color = blue,<->, bend left, shorten >=4pt, shorten <=4pt] node  {} (U3);
\end{scope}
\end{tikzpicture}
\end{center}
\caption{Dynkin diagram of type $\mrm A^{(1)}_{2r-1}$ with involution of type.}
\end{figure}
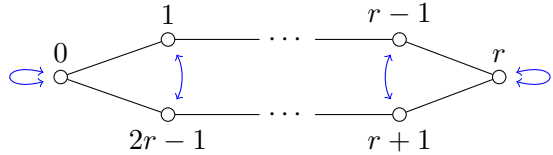

\end{document}